\newtheorem{thm}{Theorem}[section]
\newtheorem{lem}[thm]{Lemma}
\newtheorem{prop}[thm]{Proposition}
\theoremstyle{definition}
\newtheorem{defn}[thm]{Definition}
\newtheorem{rem}[thm]{Remark}
\newtheorem{con}[thm]{Conjecture}
\begin{document}

\title{List colourings of multipartite hypergraphs}

%\date{26th August 2015}
\subjclass[2000]{05C15}

\author{Ar\`es M\'eroueh}\thanks{The first author was supported by the EPSRC}
\author{Andrew Thomason}

\address{Department of Pure Mathematics and Mathematical Statistics\\
Centre for Mathematical Sciences, Wilberforce Road, Cambridge CB3 0WB, UK}

\email{ajm271@dpmms.cam.ac.uk}
\email{a.g.thomason@dpmms.cam.ac.uk}

\begin{abstract}
  Let $\chi_l(G)$ denote the list chromatic number of the $r$-uniform
  hypergraph~$G$. Extending a result of Alon for graphs, Saxton and the
  second author used the method of containers to prove that, if $G$ is
  simple and $d$-regular, then  $\chi_l(G)\ge (1/(r-1)+o(1))\log_r d$.

  To see how close this inequality is to best possible, we examine
  $\chi_l(G)$ when $G$ is a random $r$-partite hypergraph with $n$ vertices
  in each class. The value when $r=2$ was determined by Alon and
  Krivelevich; here we show that $\chi_l(G)= (g(r,\alpha)+o(1))\log_r d$
  almost surely, where $d$ is the expected average degree of~$G$ and
  $\alpha=\log_nd$.

  The function $g(r,\alpha)$ is defined in terms of ``preference orders''
  and can be determined fairly explicitly. This is enough to show that the
  container method gives an optimal lower bound on $\chi_l(G)$ for $r=2$
  and $r=3$, but, perhaps surprisingly, apparently not for $r\ge4$.
\end{abstract}

\maketitle

\section{Introduction}

Let $G$ be an $r$-uniform hypergraph: that is to say, its edges are sets of
$r$ vertices. For brevity, we often call $G$ an $r$-graph: thus a 2-graph
is just a graph. Given an assignment $L:V(G)\to\mathcal{P}(\mathbb{N})$ of
a list $L(v)$ of colours to each vertex~$v$, we say $G$ is $L$-{\em
  chooseable} if, for each vertex~$v$, it is possible to choose a colour
$c(v)\in L(v)$, such that there is no edge~$e$ with $c(v)$ the same for all
$v\in e$. The minimum number~$k$ such that $G$ is $L$-choosable whenever
$|L(v)|\ge k$ for every~$v$, is called the {\em list-chromatic} number
of~$G$, denoted by $\chi_l(G)$. This notion was introduced for graphs by
Vizing~\cite{V} and by Erd\H{o}s, Rubin and
Taylor~\cite{ERT}. In~\cite{ERT} it was proved, amongst other things, that
$\chi_l(K_{d,d})=(1+o(1))\log_2 d$, and also that the determination of
$\chi_l(K_{d,d})$ is intimately related to the study of ``Property~B''
(namely, the study of the minimum number of edges in a non-bipartite
uniform hypergraph). The $o(1)$ term here, as elsewhere in this paper,
denotes a quantity tending to zero as $d\to\infty$.  (This is a convenient
place to point out that logarithms to various different bases appear in
this paper but, where no base is specified, the logarithm is natural.)

The theorem of Erd\H{o}s, Rubin and Taylor was extended by Alon and
Krivelevich~\cite{AKr}, who proved that $\chi_l(G)=(1+o(1))\log_2 d$ holds
almost surely for a random bipartite graph with~$n$ vertices in each class,
edges being present independently with probability~$p$, provided $d=pn>d_0$
for some constant~$d_0$. (They actually proved something a little sharper,
and they showed that this more general result is also tied to Property~B.)

Alon~\cite{A2} proved that {\em every} graph $G$ of average degree~$d$
satisfies $\chi_l(G)\ge (1/2+o(1))\log_2 d$. The value $1/2$ can, in fact,
be replaced by~$1$ here (see below), and so it follows that complete
bipartite graphs, and more generally random bipartite graphs, are graphs
whose list chromatic number is (more or less) minimal amongst graphs of
given average degree.

When $r\ge3$ it is not true, in general, that the list chromatic number of
an $r$-graph grows with its average degree. For example, if $F$ is a
2-graph and $G$ is an $r$-graph on the same vertex set, such that every
edge of $G$ contains an edge of~$F$, then $\chi_l(G)\le \chi_l(F)$, but the
average degree of $G$ can be large, even if that of $F$ is not and
$\chi_l(F)$ is small. However, examples of this kind can be avoided by
considering {\em simple} $r$-graphs, in which different edges have at most
one vertex in common. For this reason, we are particularly interested in
simple hypergraphs.

The case when the edges of $G$ form a Steiner triple system was studied by
Haxell and Pei~\cite{HP}, who proved $\chi_l(G)=\Omega(\log d/\log \log
d)$. Haxell and Verstra\"ete~\cite{HV} obtained the bound
$\chi_l(G)\ge(1+o(1))\left(\log d/5\log\log d\right)^{1/2}$ for every
simple $d$-regular $3$-graph, and Alon and Kostochka~\cite{AK1} showed that
$\chi_l(G)\ge(\log d)^{1/(r-1)}$ for every simple $r$-graph~$G$ of average
degree~$d$; in particular $\chi_l(G)$ grows with~$d$.

The correct rate of growth was found by Saxton and Thomason~\cite{ST1}, who
proved, via the container method, that $\chi_l(G)\ge\Omega(\log d)$ if $G$
is a simple $d$-regular $r$-graph (see~\cite{ST3} for a refinement of the
argument). An improved bound was obtained in~\cite{ST2}, with an extension
to not-quite-simple $r$-graphs in~\cite{ST4}: we state it here.

\begin{prop}[\cite{ST2,ST4}]\label{cor:chil}
  Let $r\in\mathbb N$ be fixed. Let $G$ be an $r$-graph with average
  degree~$d$. Suppose that, for $2\le j\le r$, each set of $j$ vertices
  lies in at most $d^{(r-j)/(r-1)+o(1)}$ edges, where $o(1)\to0$ as
  $d\to\infty$. Then
  $$
  \chi_l(G)\,\ge\,\,(1+o(1))\,\frac{1}{(r-1)^2} \log_rd \,.
  $$
  Moreover, if $G$ is regular then
  $$
  \chi_l(G)\,\ge\,\,(1+o(1))\,\frac{1}{r-1}\log_rd\,.
  $$
\end{prop}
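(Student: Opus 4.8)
The plan is to recast list-colourings of $G$ as independent transversals of an auxiliary $r$-graph and apply the hypergraph container method to the latter, in the spirit of Alon~\cite{A2} for graphs and of Saxton and Thomason~\cite{ST1,ST2,ST3,ST4} for $r$-graphs. First I would pass to large minimum degree: deleting, one at a time, any vertex lying in fewer than $d/r$ edges must terminate (since $G$ has only $dn/r$ edges) at a nonempty induced subgraph $G'$ in which every vertex lies in at least $d/r$ edges, the co-degree hypotheses being inherited and $\chi_l(G)\ge\chi_l(G')$; if $G$ was already regular one simply takes $G'=G$. Next, fix a target list-size $k$, put $N=k^{3}$, and form the $r$-graph $H$ on vertex set $V(G')\times[N]$ whose edges are the sets $e\times\{j\}$ with $e\in E(G')$ and $j\in[N]$. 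Then $H$ is simple whenever $G'$ is (the edges $e\times\{j\}$ and $e'\times\{j\}$ meet in $(e\cap e')\times\{j\}$), the degree of $(v,j)$ in $H$ equals the degree of $v$ in $G'$, and the co-degree bounds are inherited by $H$. A colouring $c\colon V(G')\to[N]$ is proper for $G'$ exactly when $S_c:=\{(v,c(v)):v\in V(G')\}$ is independent in $H$, and $S_c$ meets every fibre $\{v\}\times[N]$ exactly once. Hence if $G'$ is $L$-choosable for some assignment with $|L(v)|=k$, then $H$ has an independent set $S$, meeting every fibre, with $S\subseteq\{(v,j):j\in L(v)\}$.

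Now apply the container theorem for simple $r$-graphs~\cite{ST3} to $H$: for each fixed $\eta>0$ it produces a family $\mathcal C$ of containers such that every independent set of $H$ lies in some $C\in\mathcal C$, each $C$ satisfies $e(H[C])\le\eta\,e(H)$, and $\log|\mathcal C|\le\varepsilon\,v(H)$, where $\varepsilon=\varepsilon(d)\to0$ — and, crucially, $\varepsilon=d^{-1/(r-1)+o(1)}$ in the regular case, the co-degree hypothesis being exactly what makes this exponent available. When $H$ is $d$-regular, a container $C$ with $e(H[C])\le\eta\,e(H)$ has $|C|\le(1-\delta)\,v(H)$ for $\delta:=(1-\eta)/r$, since deleting the $v(H)-|C|$ vertices outside $C$ destroys at most $(v(H)-|C|)d$ edges yet must destroy at least $(1-\eta)e(H)=(1-\eta)d\,v(H)/r$ of them; letting $\eta\to0$ with $d$ gives $\delta\to1/r$.

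Finally, let $L$ be random with the $L(v)$ independent uniform $k$-subsets of $[N]$. If $G'$ is $L$-choosable then, by the reduction, some $C\in\mathcal C$ contains an admissible independent transversal, so, with $B_v:=\{j:(v,j)\in C\}$, we have $L(v)\cap B_v\ne\emptyset$ for all $v\in V(G')$. Since $\sum_v|B_v|=|C|\le(1-\delta)\,|V(G')|N$, for any fixed small $\gamma$ at least $\delta\gamma\,|V(G')|$ of the vertices have $|B_v|\le(1-\delta+\delta\gamma)N$, and for each of those, using $N=k^{3}\gg k$,
$$
\Pr\bigl[L(v)\cap B_v=\emptyset\bigr]=\binom{N-|B_v|}{k}/\binom{N}{k}\ \ge\ (1-o(1))\bigl(\delta(1-\gamma)\bigr)^{k}.
$$
These events are independent over $v$, so summing over $\mathcal C$,
$$
\Pr\bigl[G'\text{ is }L\text{-choosable}\bigr]\ \le\ 2^{\varepsilon v(H)}\exp\!\bigl(-(1-o(1))(\delta(1-\gamma))^{k}\,\delta\gamma\,|V(G')|\bigr).
$$
As $v(H)=|V(G')|N$, this is below $1$ — exhibiting lists of size $k$ for which $G$ is not choosable, so $\chi_l(G)>k$ — as soon as $(\delta(1-\gamma))^{k}>C\varepsilon N$ for a constant $C$, that is $k\le(1-o(1))\log(1/\varepsilon)/\log(1/\delta)$ after letting $\gamma\to0$ slowly. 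In the regular case $\delta\to1/r$ and $\log(1/\varepsilon)=(1/(r-1)-o(1))\log d$, so $\chi_l(G)\ge(1+o(1))\frac{1}{r-1}\log_r d$; without regularity the container estimates are weaker (this is the more delicate argument of~\cite{ST4}), and the same computation yields only the first inequality.

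The real content is the container estimate: that for a simple, regular $r$-graph with the stated co-degrees the container family can be taken with $\log|\mathcal C|=d^{-1/(r-1)+o(1)}v(H)$. Extracting the exponent $1/(r-1)$ (rather than merely $o(1)$) forces the container-generating algorithm to cash in the co-degree bounds at every level of its recursion on the uniformity, and it is precisely the loss of this efficiency when regularity is dropped — where a few vertices of very large degree spoil the opening rounds — that accounts for the gap between the two inequalities. The remaining steps above are routine estimates.
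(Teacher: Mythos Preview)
The paper does not actually prove this proposition: it is quoted, with attribution, from~\cite{ST2,ST4} and used as a black box throughout. So there is no ``paper's own proof'' to compare against. What can be said is that your outline follows the container-method strategy of those references, and the overall architecture --- blow up $G$ by a colour coordinate, identify proper list-colourings with independent transversals of the blown-up $r$-graph~$H$, cover the independent sets of~$H$ by few containers each missing a positive fraction of the vertices, then use random lists to defeat every container simultaneously --- is indeed how the cited results are obtained.

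That said, two points in your sketch would need tightening before it constitutes a proof. First, you invoke ``the container theorem for simple $r$-graphs~\cite{ST3}'', but neither $G$ nor $H$ is assumed simple: the hypothesis is the co-degree bound $d^{(r-j)/(r-1)+o(1)}$ on $j$-sets, and it is precisely the more general container theorem of~\cite{ST2,ST4} (which takes co-degree profiles as input) that converts this into the claimed exponent $\varepsilon=d^{-1/(r-1)+o(1)}$. You acknowledge this in your final paragraph, but the body of the argument should cite the right tool. Second, the non-regular case is essentially waved away: your min-degree reduction gives only $\delta(G')\ge d/r$, not regularity, and the passage from ``few edges in the container'' to ``the container misses a $1/r$-fraction of vertices'' genuinely uses regularity (a single high-degree vertex outside $C$ could account for many missing edges). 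The factor $1/(r-1)$ lost in the first inequality versus the second comes from exactly this difficulty, and handling it is the substance of~\cite{ST4}; saying ``the same computation yields only the first inequality'' does not explain where the extra $1/(r-1)$ goes.
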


In particular, for $2$-graphs, the $1/2$ in Alon's bound can be replaced
by~$1$, which is tight, as described above. Thus the container method gives
a best possible bound for graphs. Does it also give an optimal bound for
$r$-graphs (at least simple $r$-graphs) when $r\ge 3$? That is the question
underlying the results of this paper.

To answer this question, it is natural, in the light of what is known about
$2$-graphs, to examine $r$-partite $r$-graphs. (Every $r$-graph contains an
$r$-partite subgraph whose average degree is less by only a constant
factor, so a lower bound on $\chi_l$ for $r$-partite graphs applies to all
$r$-graphs. On the other hand, non-$r$-partite $d$-regular simple
$r$-graphs can have $\chi_l$ as large as $\Omega(d/\log d)$, so for upper
bound purposes we consider only $r$-partite $r$-graphs.) Our $r$-graphs $G$
will have order~$rn$, with vertex set $V=V_1\cup V_2\cup\cdots\cup V_r$,
the $V_i$'s being disjoint sets of size~$n$. Each edge of $G$ has exactly
one vertex in each~$V_i$.

\subsection{Properties of $r$-partite $r$-graphs}\label{subsec:rpart}

A simple random argument, mimicking Erd\H{o}s's work on
Property~B~\cite{E1}, shows that if $G$ is such an $r$-partite $r$-graph
then $\chi_l(G)\le \log_r n+2$. (Suppose $|L(v)|=\ell$ for
all~$v$. Throughout the paper we use the word {\em palette} for the set
$\bigcup_{v\in V(G)}L(v)$ (or a superset of it); it is a set containing all
colours in all lists. For each colour in the palette, select some $V_i$ at
random, and forbid the colour to be chosen by any vertex in $V_i$. Then the
expected number of vertices~$v$ having every colour in $L(v)$ forbidden is
$rnr^{-\ell}$. So if $rnr^{-\ell}<1$ then $G$ is $L$-chooseable.) This
bound holds even for complete $r$-partite $r$-graphs --- that is, where
every possible edge is present. If $r\ge3$, these $r$-graphs are
not simple. But it is not difficult to construct a simple $d$-regular
$r$-partite $r$-graph $G$ with $n$ not much larger than~$d$, thereby giving
examples of simple $d$-regular $r$-graphs with $\chi_l(G)\le (1+o(1))\log_r
d$.

It follows from these remarks and from Proposition~\ref{cor:chil} that the
minimum value of $\chi_l(G)$ amongst simple $d$-regular $r$-graphs lies
between $(1/(r-1)+o(1))\log_rd$ and $(1+o(1))\log_r d$. In the light of the
case $r=2$, one might expect the minimum to be attained by $r$-partite
$r$-graphs of order $rn$ with $n$ close to~$d$, or by random $r$-partite
graphs, and so these are the objects we study.

An important definition is the following. Given an $r$-partite $r$-graph as
just described, and a subset $X\subset V$, let $X_i=X\cap V_i$. We define
$i_X$ to be the index $i$ such that $|X_i|$ is largest. For the sake of
definiteness, if there is more than one such index we take $i_X$ to be the
smallest, though any choice would do. Thus we define
$$
i_X = \min \{i\,: |X_i| = \max \{|X_j|: 1\le j\le r\}\,\}\,.
$$

We can now define the two properties of $G$ that will matter to us. Both
properties involve a condition on sets $X$ stated in terms of the product
of all $|X_i|$ except the largest, that is, a product of $r-1$ quantities.
The first condition is about independent sets, meaning sets~$X$ that
contain no edge of~$G$. The second is about degenerate sets: as usual, we
say that $X$ is $k$-{\em degenerate} if, for every non-empty $Y\subset X$,
the subgraph $G[Y]$ has a vertex of degree at most~$k$. Degenerate sets are
relevant here because they are easily coloured, as noted
in Lemma~\ref{lem:degen}.

\begin{defn}\label{defn:ND}
Let $G$ be an $r$-uniform $r$-partite hypergraph as just described. Let
$d$ be a real number with $1\le d\le n^{r-1}$.
\begin{itemize}
\item $G$ has {\em property} $I(r,n,d)$ if every independent set $X$
satisfies
\begin{equation}
\prod_{i\ne i_X}|X_i|< {n^{r-1}\over d} \log^2d\,.\label{eqn:I}
\end{equation}
\item $G$ has {\em property} $D(r,n,d)$ if every set $X$ that satisfies
\begin{equation}
\prod_{i\ne i_X}|X_i|<{n^{r-1}\over d} \label{eqn:D}
\end{equation}
is $4(\log d/ \log\log d)$-degenerate.
\end{itemize}
\end{defn}

The properties are useful when the parameter $d$ is equal, or near to, the
average degree, though it is convenient not to make this a requirement. In
particular, note that if $G$ has property $D(r,n,d)$ then it has
$D(r,n,d')$ for every $d'>d$; indeed every $G$ has property
$D(r,n,n^{r-1})$ since the only sets then satisfying (\ref{eqn:D}) have
$X_i=0$ for some~$i$. Similarly, if $G$ has property $I(r,n,d)$ then it has
$I(r,n,d')$ for $e^2<d'<d$. The interesting values of $d$ are those for
which $G$ has both $D(r,n,d)$ and $I(r,n,d)$. The apparently strange
dependence on $d$ in the definitions is not crucial to our main theorem: we
need only an expression close to $n^{r-1}/d$ in each of (\ref{eqn:I})
and~(\ref{eqn:D}).  The definitions are stated in the way they are in order
to comply, rather loosely, with properties of random hypergraphs when $d$
is the expected average degree.

\begin{thm}\label{thm:randg}
  There is a number $d_0=d_0(r)$ such that the following holds.  Let $G\in
  \mathcal{G}(n,r,p)$ be a random $r$-partite $r$-uniform hypergraph where
  $p=p(n)$, and let $d=pn^{r-1}\ge d_0$. Then $G$ almost surely has
  properties $I(r,n,d)$ and $D(r,n,d)$.
\end{thm}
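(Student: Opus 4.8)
The plan is to prove both assertions by a first‑moment argument: for each property I bound the expected number of ``witnesses'' to its failure and show this tends to~$0$. Throughout write $p=d/n^{r-1}$.

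\smallskip\noindent\emph{Property $I(r,n,d)$.} Put $T=(n^{r-1}/d)\log^2d=(\log^2d)/p$. Failure of~$I$ means $G$ has an independent set $X$ with $\prod_{i\ne i_X}|X_i|\ge T$, i.e.\ with part sizes (listed increasingly) $1\le a_1\le\cdots\le a_r\le n$ satisfying $a_1\cdots a_{r-1}\ge T$. There are at most $n^r$ such size sequences, each arising from at most $r!\prod_i\binom{n}{a_i}$ sets, each independent with probability $(1-p)^{\prod_ia_i}\le e^{-p\prod_ia_i}$. I would use $\binom{n}{a}\le(en/a)^a$ (essential when some $a_i$ is comparable to~$n$) and the fact that $x\mapsto x\log(en/x)$ is increasing on $[1,n]$, so that $\prod_i\binom{n}{a_i}\le\exp\!\big(\sum_ia_i\log(en/a_i)\big)\le\exp\!\big(r\,a_r\log(en/a_r)\big)$; together with $\prod_ia_i=a_r\prod_{i<r}a_i\ge a_rT$ and $a_r\ge(\prod_{i<r}a_i)^{1/(r-1)}\ge T^{1/(r-1)}$. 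This makes the expected number of bad $X$ of a given shape at most $r!\exp\!\big(a_r[\,r\log(en/a_r)-\log^2d\,]\big)$ (using $pT=\log^2d$). Since $a_r\ge T^{1/(r-1)}=n(\log^2d)^{1/(r-1)}/d^{1/(r-1)}$ forces $r\log(en/a_r)\le r+\tfrac{r}{r-1}\log d$, the bracket is at most $-\tfrac12\log^2d$ once $d\ge d_0(r)$, and then $\tfrac12a_r\log^2d\ge\tfrac12\,(n/d^{1/(r-1)})(\log^2d)^{r/(r-1)}$ exceeds $(r+1)\log n$; summing over the $\le n^r$ shapes gives $o(1)$.

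\smallskip\noindent\emph{Property $D(r,n,d)$.} Put $k=4\log d/\log\log d$. First I would note that $\prod_{i\ne i_X}|X_i|$ (the product of all but the largest part size) is monotone under subsets — if $|X'_i|\le|X_i|$ for all $i$ then the product of the $r-1$ smallest of the $|X'_i|$ is at most that of the $|X_i|$ — so the family of sets obeying (\ref{eqn:D}) is downward closed; hence $G$ fails $D(r,n,d)$ iff there is a set $Z$ with $\prod_{i\ne i_Z}|Z_i|<n^{r-1}/d$ and $\delta(G[Z])\ge k+1$. For such a $Z$, let $c=i_Z$; each $v\in Z_c$ lies in at least $k+1$ edges of $G[Z]$, each consisting of $v$ with one vertex from every other part, so $v$ has exactly $\beta:=\prod_{i\ne c}|Z_i|<n^{r-1}/d$ potential edges inside $Z$, and $\beta\ge k+1$. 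Crucially these pools of potential edges are \emph{disjoint} for distinct $v\in Z_c$, so the degrees in $G[Z]$ of the vertices of $Z_c$ are mutually independent, giving
\[
\mathbb P\big(\delta(G[Z])\ge k+1\big)\le\Big(\tbinom{\beta}{k+1}p^{k+1}\Big)^{|Z_c|}\le\Big(\frac{ep\beta}{k+1}\Big)^{(k+1)|Z_c|}.
\]
Summing over $Z$ of a given shape ($\le r!\prod_i\binom{n}{|Z_i|}$ of them), again with $\binom{n}{b}\le(en/b)^b$ and the monotonicity of $x\log(en/x)$, and writing $\ell=|Z_c|$ (so $\ell\ge\beta^{1/(r-1)}$, since $\beta$ is a product of $r-1$ sizes each $\le\ell$), the contribution of that shape is at most $r!\exp\!\big(\ell\,[\,r\log(en/\ell)-(k+1)\log((k+1)/(ep\beta))\,]\big)$. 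Using $(k+1)\log((k+1)/e)\ge(1-o(1))4\log d$, together with $\log(n/\ell)\le\tfrac1{r-1}\log(n^{r-1}/\beta)$ and $p\beta\ge(k+1)d/n^{r-1}$, I would show this exponent is at most $-(r+2)\log n$ for all $d\ge d_0(r)$; summing over shapes then gives $o(1)$. (If $n^{r-1}/d\le k+1$ there is no admissible $\beta$ and $D(r,n,d)$ holds trivially.)

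\smallskip The reductions and binomial estimates are routine. The part I expect to be genuinely delicate is the last step in each case: checking that the exponent produced by the first‑moment bound is negative \emph{uniformly} for $d_0\le d\le n^{r-1}$. When $d$ is small the hypergraph is very sparse but (\ref{eqn:D}) constrains nearly all sets, so one asserts degeneracy for an enormous family and the saving must come from the small edge‑probability (hence from retaining the factor $p\beta$, not merely $p\beta<1$, in the tail estimate); when $d$ is large few sets satisfy (\ref{eqn:D}) but the degeneracy threshold $k+1$ is only logarithmic in~$d$, and the saving comes instead from $\binom{n}{b}=(en/b)^{b(1+o(1))}$ being far smaller than $n^b$ once $b$ is comparable to~$n$. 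Matching these effects across the intermediate range — and likewise for $I(r,n,d)$ — forces one to sum over the right parameter (the largest part size $\ell$, together with $\beta=\prod_{i\ne i_Z}|Z_i|$) and then carry out a short case analysis, and it is here that the precise numerical shape of Definition~\ref{defn:ND} (the $\log^2d$ and the $4\log d/\log\log d$) is used.
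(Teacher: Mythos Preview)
Your approach is correct and is essentially the same first–moment argument as the paper's. The only organisational differences are that the paper packages the union bound over sets into a single lemma (Lemma~\ref{lem:y}, requiring $\Pr(X\in E)\le(|X_{i_X}|/2en)^{(r+1)|X_{i_X}|}$), and for property~$D$ it bounds the total edge count $S\ge k|X_{i_X}|$ rather than the minimum degree in the largest class; either way one arrives at the bound $(ep\beta/k)^{k|X_{i_X}|}$ and finishes with the two–regime case split at $|X_{i_X}|=n/d^{1/(r-1)}$ that you anticipate.
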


This theorem is entirely routine: the point of it is that it gives examples
of $r$-graphs having both properties.  As we shall explain shortly,
$\chi_l(G)$ can be determined very precisely for any $r$-graph $G$ having
both properties, and we can then compare this value with the lower bound
given by Proposition~\ref{cor:chil}. 

The lower bound in Proposition~\ref{cor:chil} is better for
regular~$G$. Random $r$-graphs are close to regular but not quite
regular. In fact the lower bound given for regular $r$-graphs holds for
such close-to-regular graphs too, but it is worth noting the existence of
regular $r$-graphs having the two properties.

\begin{thm}\label{thm:regg}
  There is a number $d_0=d_0(r)$ such that the following holds. Let $d$ be
  an integer with $d\ge d_0$ and let $n\ge r^5d^4$. Then there is a simple
  $d$-regular $r$-partite $r$-graph $G$ having properties $I(r,n,d)$ and
  $D(r,n,d)$.
\end{thm}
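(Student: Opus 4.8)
\medskip
\noindent\emph{Plan of proof.} The idea is to take $G$ to be a random union of $d$ ``shift matchings'' in $\mathbb{Z}_n$, so that $d$-regularity is built in, and then to verify properties $I(r,n,d)$ and $D(r,n,d)$ by running the argument that proves Theorem~\ref{thm:randg}. Identify each $V_i$ with $\mathbb{Z}_n$. For $c=(c_2,\dots,c_r)\in\mathbb{Z}_n^{r-1}$, writing also $c_1=0$, let $M_c=\{(a,a+c_2,\dots,a+c_r):a\in\mathbb{Z}_n\}$; this is a perfect matching of the complete $r$-partite $r$-graph on $V$, and distinct $c$ give edge-disjoint matchings. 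Choose $c^{(1)},\dots,c^{(d)}\in\mathbb{Z}_n^{r-1}$ independently and uniformly at random and put $G=M_{c^{(1)}}\cup\cdots\cup M_{c^{(d)}}$. Each vertex lies in exactly one edge of each $M_c$, so once the $c^{(k)}$ are distinct (which happens with probability $1-o(1)$) the graph $G$ is $d$-regular. For simplicity: an edge of $M_{c^{(k)}}$ and an edge of $M_{c^{(l)}}$ with $k\neq l$ can meet in two classes $i\neq j$ only when $c^{(k)}_i-c^{(k)}_j=c^{(l)}_i-c^{(l)}_j$, an event of probability $1/n$ for fixed $\{i,j\}$ and $k\neq l$; a union bound over the fewer than $r^2d^2$ possibilities shows $G$ is simple with probability at least $1-r^2d^2/n\ge 1-1/(r^3d^2)$.

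It remains to establish properties $I(r,n,d)$ and $D(r,n,d)$, and this is the substantive part. For a box $X=X_1\times\cdots\times X_r$, reparametrising $M_c$ through its coordinate in the largest class shows that $M_c\cap X\neq\emptyset$ precisely when $X_{i_X}\cap\bigcap_{i\neq i_X}(X_i-\delta_i)\neq\emptyset$, where $(\delta_i)_{i\neq i_X}$ is uniform on $\mathbb{Z}_n^{r-1}$; a short second-moment computation (Paley--Zygmund together with H\"older) then gives
$$
P\big(M_c\cap X=\emptyset\big)\ \le\ \exp\!\Big(-\,c(r)\,\tfrac{1}{n^{r-1}}\,\textstyle\prod_{i\neq i_X}|X_i|\Big),
$$
valid whenever $\prod_i|X_i|\ge 2n^{r-1}$, and in particular for every independent set large enough to violate \eqref{eqn:I} (here $n\ge r^5d^4$ is used). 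By independence of the $d$ matchings, $P(X\text{ independent in }G)\le\exp\!\big(-c(r)\,d\,n^{-(r-1)}\prod_{i\neq i_X}|X_i|\big)$, and for $X$ violating \eqref{eqn:I} the exponent is $\Omega(\log^2 d)$, so this probability is $d^{-\Omega(\log d)}$. An analogous Chernoff-type upper-tail estimate shows that for a small box $X$ it is very unlikely that some $Y\subseteq X$ has $|E(G[Y])|>4(\log d/\log\log d)\max_i|Y_i|$, which is exactly what is needed to rule out $Y\subseteq X$ with minimum degree exceeding $4(\log d/\log\log d)$, hence to verify \eqref{eqn:D}. One then takes the union over offending boxes $X$ in the same way as in the proof of Theorem~\ref{thm:randg}, restricting to minimal offenders; since the per-box bounds here have the same shape as those for the random hypergraph $\mathcal{G}(n,r,p)$ with $p=d/n^{r-1}$ --- note that the quantity $\prod_{i\neq i_X}|X_i|$ governing them is precisely the one appearing in Definition~\ref{defn:ND} --- that argument carries over.

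Combining the three estimates, for $d\ge d_0$ the events ``$G$ is simple'', ``$G$ has $I(r,n,d)$'' and ``$G$ has $D(r,n,d)$'' hold simultaneously with positive probability, which gives the required $G$. The main obstacle is this last step: transplanting the minimal-offender union bound of Theorem~\ref{thm:randg} to the present model, in which the indicator events ``$e\in M_c$'' over the edges of a single matching are governed by one random vector instead of being independent. What needs checking is that this coupling affects only the constants and not the form of the tail bounds, so that the union-bound step still closes; matching the details of the degeneracy count for \eqref{eqn:D} to the random-hypergraph case is the other point needing care, though neither difficulty is serious.
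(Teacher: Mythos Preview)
Your approach is genuinely different from the paper's, and getting simplicity and $d$-regularity for free via shift matchings is attractive. But the argument has a real gap at the union-bound step: the per-set probability you obtain is too weak to survive summing over all offending~$X$. Your Paley--Zygmund estimate $P(M_c\cap X=\emptyset)\le\exp\bigl(-c(r)\prod_{i\ne i_X}|X_i|/n^{r-1}\bigr)$ is correct, but it is also essentially sharp in the shift model: for $r=2$ and $X_1=X_2=\{0,\ldots,m-1\}$ one has $P(M_c\cap X=\emptyset)=(n-2m+1)/n\approx e^{-2m/n}$. Over $d$ matchings this gives only $P(X\text{ independent})\le e^{-2dm/n}$, which for $m=(n/d)\log^2 d$ is $e^{-2\log^2 d}$, \emph{independent of $|X_{i_X}|$}. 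The union bound of Lemma~\ref{lem:y} (which is what the proof of Theorem~\ref{thm:randg} uses) needs $P(X\in E)\le (|X_{i_X}|/2en)^{(r+1)|X_{i_X}|}$, i.e.\ a bound whose exponent scales with $|X_{i_X}|$; your exponent does not, and summing over the $\sim\binom{n}{m}^r$ sets of a given profile the bound diverges. So the claim that your per-box bounds ``have the same shape as those for $\mathcal{G}(n,r,p)$'' is false --- in $\mathcal{G}(n,r,p)$ the exponent is $\lambda/8=d|X_{i_X}|\prod_{i\ne i_X}|X_i|/8n^{r-1}$, and the extra factor $|X_{i_X}|$ is precisely what you are missing. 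The upper-tail estimate for $D(r,n,d)$ has the same problem: in the shift model $P(A+c_i\subseteq X_i)$ is not bounded by $(|X_i|/n)^{|A|}$ when $X_i$ has additive structure, so the inequality $P(R\ge k|X_{i_X}|)\le((ed/k)\prod_{i\ne i_X}x_i)^{k|X_{i_X}|}$ from the proof of Theorem~\ref{thm:randg} does not transfer.

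The paper avoids all this by taking the $d$ matchings uniformly at random rather than as shifts. The point is that then $Z=|X_1\cap T_2\cap\cdots\cap T_r|$ with $T_i$ uniform random sets of the right size, and by the Vatutin--Mikhailov result cited in Remark~\ref{rem:chernoff} this $Z$ is a genuine sum of independent Bernoulli variables; hence the Chernoff bound of Proposition~\ref{prop:chernoff} applies to the edge count in $H[X]$, giving exactly the exponent-in-$|X_{i_X}|$ bound that Lemma~\ref{lem:y} needs, and the proof of Theorem~\ref{thm:randg} carries over verbatim to establish the slightly stronger properties $I'(r,n,d)$ and $D'(r,n,d)$. The price is that $H$ is not simple, so the paper adds a repair step (Lemma~\ref{lem:hhat}): with probability at least $1/8$ the ``butterflies'' in $H$ are few and pairwise disjoint, and can be eliminated by swapping at most $r^3d^2$ independent edges. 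The hypothesis $n\ge r^5d^4$ is used here, both to find room for the swaps and to ensure that changing $\le r^3d^2\le n/2d^{1/(r-1)}$ edges cannot destroy $I$ or $D$ (this is why the paper proves $I'$ and $D'$, which tolerate a few edges and one extra unit of degeneracy).
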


\subsection{List chromatic numbers}\label{subsec:listcc}

Our main result is that the list chromatic number of hypergraphs satisfying
both properties can be determined more or less exactly. It will be
expressed in terms of the function $g(r, \alpha)$, a function defined via
what we call {\em preference orders}. The precise definition is delayed
to~\S\ref{sec:preford} because it needs a little discussion.

The parameter $\alpha$, however, can be explained now: it will always be
true that $\alpha=\log_n d$, where $d$ is as in
Definition~\ref{defn:ND}. We specified $d\ge 1$ in that definition so that
$\alpha$ is well-defined. Since $1\le d\le n^{r-1}$ we always have $0\le
\alpha\le r-1$. Notice that, if $G$ is simple, then the average degree is
at most~$n$, and so (since we are imagining $d$ in the definition to be the
average degree), simple hypergraphs are associated with the range
$0\le\alpha\le 1$. Similarly, complete $r$-partite hypergraphs are
associated with $\alpha=r-1$.

Here, at last, is the main theorem.

\begin{thm}\label{thm:mainthm}
Let $G$ be an $r$-uniform $r$-partite hypergraph that satisfies the properties
$I(r,n,d)$ and $D(r,n,d)$ of Definition~\ref{defn:ND}. Then
$$
\chi_l(G)=(g(r,\alpha)+o(1))\log_r d\,.
$$
Here, $\alpha= \log_n d$, the function $g(r,\alpha)$ is
described in terms of preference orders by Definition~\ref{defn:g}, and
the $o(1)$ term tends to zero as $d\to\infty$.
\end{thm}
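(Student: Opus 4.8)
The plan is to prove matching upper and lower bounds for $\chi_l(G)$, both of the form $(g(r,\alpha)+o(1))\log_r d$, where the function $g(r,\alpha)$ arises as the optimal value of a combinatorial optimisation over preference orders (to be set up in \S\ref{sec:preford}). Throughout we write $\ell$ for the common list size $|L(v)|$, and we think of a list assignment as distributing each colour of the palette among the vertices. The key heuristic is that, for a given colour $c$ and a given part~$V_i$, the set $S_{c,i}$ of vertices in $V_i$ that retain $c$ in their list behaves, after we try to colour greedily, like a random-ish subset; a vertex survives (remains uncoloured) only if every colour in its list is ``blocked'' by choices made in the other $r-1$ parts. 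Property $I(r,n,d)$ controls how large an independent — hence safely monochromatic — set can be in the relevant product sense, while property $D(r,n,d)$ guarantees that once the product $\prod_{i\ne i_X}|X_i|$ of a candidate set drops below $n^{r-1}/d$, the induced subgraph is $O(\log d/\log\log d)$-degenerate and so can be finished off cheaply, using Lemma~\ref{lem:degen}. The parameter $\alpha=\log_n d$ measures where the threshold $n^{r-1}/d = n^{r-1-\alpha}$ sits relative to~$n$, and this is exactly what the preference-order optimisation is built to track.

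For the \textbf{upper bound} $\chi_l(G)\le (g(r,\alpha)+o(1))\log_r d$, I would exhibit, for each list assignment $L$ with $\ell=(g(r,\alpha)+o(1))\log_r d$, a proper colouring. The strategy is an iterative ``colour-and-reduce'' argument driven by an optimal preference order: process the colours of the palette in a carefully chosen order, and for each colour $c$ decide, according to the preference order, which part $V_i$ is \emph{allowed} to use $c$ (equivalently, which $r-1$ parts forbid it). One colours vertices in rounds; after a round, the still-uncoloured vertices form a set $X$, and one shows that with high probability over the random choices $\prod_{i\ne i_X}|X_i|$ shrinks by roughly a factor of $r$ (this is where the base-$r$ logarithm and the precise shape of $g$ enter — the preference order is optimised to make this shrinkage as fast as possible subject to lists not running out). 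After $O(\log_r(n^{r-1}/d)) = O(\log_r d)$ such rounds — the number of rounds being governed by $g(r,\alpha)$ — the product falls below $n^{r-1}/d$, at which point property $D(r,n,d)$ makes the remainder $4(\log d/\log\log d)$-degenerate, and Lemma~\ref{lem:degen} finishes the colouring with a further $O(\log d/\log\log d) = o(\log_r d)$ colours, which is absorbed into the $o(1)$. The bookkeeping — that each vertex's list of $\ell$ colours suffices across all rounds — is precisely what the definition of $g$ is rigged to make work.

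For the \textbf{lower bound} $\chi_l(G)\ge (g(r,\alpha)-o(1))\log_r d$, I would construct a bad list assignment $L$ with $\ell = (g(r,\alpha)-o(1))\log_r d$ that is not $L$-choosable, again guided by an optimal preference order. The idea is to build lists so that, whatever colour $c(v)\in L(v)$ one picks, a monochromatic edge is forced. Concretely, one uses the preference-order data to partition the palette and assign lists in a product-like fashion across the parts $V_1,\dots,V_r$, so that for \emph{every} choice function $c$ there is an edge $e=\{v_1,\dots,v_r\}$ with $c(v_1)=\cdots=c(v_r)$; the existence of such an edge is guaranteed because property $I(r,n,d)$ says the monochromatic classes cannot all be independent once $\ell$ is below the critical value — here one argues by a counting or entropy estimate that the colour classes are forced to be large in the $\prod_{i\ne i_X}|X_i|$ sense, contradicting $I(r,n,d)$ unless some class contains an edge. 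The optimisation over preference orders on the lower-bound side is a minimisation: among all ways of laying out the lists, we take the one that forces the contradiction with the fewest colours, and by construction of $g$ this infimum equals the same $g(r,\alpha)$ as on the upper-bound side, so the two bounds meet.

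The \textbf{main obstacle} is establishing that the two preference-order optimisations — the maximisation governing how fast the uncoloured set can be driven down in the upper-bound argument, and the minimisation governing how few colours suffice to force a monochromatic edge in the lower-bound argument — have the \emph{same} optimal value $g(r,\alpha)$; this LP-duality-like coincidence is the technical heart of the paper and is exactly why the definition of $g$ needs its own section. A secondary difficulty is controlling the dependencies in the iterative colouring argument for the upper bound: the set of surviving vertices after each round is not independent of the random choices, so one must either use a martingale/Azuma concentration argument or a careful deferred-decisions exposure to show the product $\prod_{i\ne i_X}|X_i|$ really does contract at the claimed rate with high probability over the choices, and that the accumulated error over $\Theta(\log_r d)$ rounds stays within the $o(1)$ budget. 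Matching the constant exactly, rather than up to a factor, in both directions is what makes this delicate; the properties $I(r,n,d)$ and $D(r,n,d)$ are precisely the two inputs that let the heuristic be made rigorous.
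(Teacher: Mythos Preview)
Your high-level plan --- an upper bound via $D(r,n,d)$, a lower bound via $I(r,n,d)$, and preference orders mediating both --- is right, but the mechanisms you describe are not the ones that work, and at the level of detail you give there are genuine gaps.

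For the upper bound, the argument is \emph{not} iterative. What you sketch --- process colours one by one, forbid each colour on $r-1$ parts according to a preference order, watch the uncoloured set contract by a factor $r$ per round --- is exactly the elementary argument from \S\ref{subsec:rpart} that gives $\chi_l(G)\le\log_r n+2$; it cannot reach $g(r,\alpha)\log_r d$ with $g<1$, because forbidding a colour on one part extracts only $\log_r r=1$ bit per colour regardless of any ordering. The paper's algorithm (see \S\ref{sec:upperbound}) is one-shot: randomly partition the palette into $m=\Theta(\log d/\log\log d)$ \emph{blocks}, pick an optimal $(r,m)$-preference order $P$, and have each vertex $v\in V_i$ commit to its $<_i$-most-preferred \emph{available} block. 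The point is that for a block $B$ with relative positions $x=(x_1,\ldots,x_r)\in P$, the set $X(B)$ of vertices committed to $B$ satisfies $|X_i|\lesssim x_i^{\ell}n$, so $\prod_{i\ne i_X}|X_i|\lesssim f_P(\beta)^{\ell}n^{r-1}$; by the defining relation~(\ref{eqn:fng}), once $\ell>g(r,\alpha)\log_r d$ this product drops below $n^{r-1}/d$, and $D(r,n,d)$ plus Lemma~\ref{lem:degen} finishes the colouring of $G[X(B)]$ from the $>k$ colours each vertex retained in~$B$. There are no rounds and no martingales; the only randomness is the block partition, handled by a single Markov bound.

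For the lower bound, the preference order is not used to \emph{build} the lists; the lists are random, chosen via Lemma~\ref{lem:lists} to guarantee that for every set $Z$ of colours of density $z$, at least $\sim nz^\ell$ vertices in each part have their whole list inside~$Z$. The preference order is then manufactured \emph{from a putative colouring}: order the palette by popularity on $V_i$ to get $<_i$. By the definition of $f(r,\beta)$ as a minimum over all preference orders, \emph{this} order must have some colour with all relative positions $\ge\beta$ and $\prod_{i\ne i_x}x_i\ge f(r,\beta)$; the list property then forces the corresponding colour class $X$ to violate~(\ref{eqn:I}), contradicting $I(r,n,d)$.

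Finally, there is no LP-duality to establish. The same quantity $f(r,\beta)=\min_P f_P(\beta)$ appears on both sides: the upper bound \emph{chooses} an optimal $P$ to make $f_P$ small, and the lower bound uses that \emph{every} $P$ (in particular the popularity order) has $f_P\ge f(r,\beta)$. The coincidence you flag is built into Definition~\ref{defn:fg}, not something to be proved separately.
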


The theorem is a bit opaque without any information about the function
$g(r,\alpha)$, so we describe some of its properties immediately. It is, in
fact, quite a straightforward function: in particular, for $r=2$ and $r=3$
it is constant, and for every $r$ it is constant over the range
$0\le\alpha\le 1$ associated with simple hypergraphs. Moreover
in~\S\ref{sec:pref} we give an explicit formula for what we believe is the
exact value of $g(r,\alpha)$, though we have no proof.

\begin{thm}\label{thm:elemg} For each $r\in\mathbb{N}$, $r\ge2$, the function
  $g(r,\alpha)$ maps $[0,r-1]$ to $[0,1]$ as follows:
\begin{itemize}
\item[(a)] $g(r,\alpha)$ is continuous and decreasing (that is,
  non-increasing) in $\alpha$,
\item[(b)] $g(r,r-1)=1/(r-1)$,
\item[(c)] $g(2,\alpha)=1$ for $0\le\alpha\le 1$ and $g(3,\alpha)=1/2$ for 
  $0\le \alpha\le 2$,
\item[(d)] for $r\ge 4$, $g(r,\alpha)$ is constant for $0\le \alpha\le
  1+1/(r+3)$,
\item[(e)] $g(4,0)=0.3807\ldots$, and
\item[(f)] $g(r,0)\sim (\log r)/r$ as $r\to\infty$.
\end{itemize}
\end{thm}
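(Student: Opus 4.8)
The proof will be organised around whatever closed-form characterisation of $g(r,\alpha)$ emerges from the definition via preference orders in~\S\ref{sec:preford}; since $g$ is built from an optimisation over preference orders, most parts of the theorem should reduce to analysing that optimisation. First I would establish (a). Continuity and monotonicity in $\alpha$ should follow from the fact that $g(r,\alpha)$ is defined as an infimum (or supremum) of a family of quantities each of which is continuous and monotone in $\alpha$ — typically $\alpha$ enters linearly as a coefficient in some exponent or as a constraint threshold $n^{r-1}/d = n^{r-1-\alpha}$, so increasing $\alpha$ relaxes or tightens constraints monotonically. The bounds $g(r,\alpha)\in[0,1]$ come for free from the fact that $\chi_l(G)$ lies between $(1/(r-1)+o(1))\log_r d$ and $(1+o(1))\log_r d$ for the extremal examples of Theorems~\ref{thm:randg} and~\ref{thm:regg}, together with Theorem~\ref{thm:mainthm}; alternatively one reads it directly off the definition.

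Next, parts (b) and (c) are the ``exact computation'' cases. For (b), $\alpha=r-1$ corresponds to complete $r$-partite $r$-graphs, where $d=n^{r-1}$; here I expect the preference-order optimisation to degenerate to a single essentially forced choice, giving $g(r,r-1)=1/(r-1)$ — this should match the known behaviour $\chi_l=(1+o(1))\log_r n = ((r-1)+o(1))\log_r d \cdot \frac{1}{r-1}$, i.e. $\log_r n$ colours, consistent with the Erd\H{o}s-type upper bound quoted in~\S\ref{subsec:rpart}. For (c) and the constancy claims in (d), the key structural input is that when $\alpha$ is small, the relevant preference orders that achieve the optimum all look the same: the product constraint $\prod_{i\ne i_X}|X_i| < n^{r-1}/d$ is so weak that the extremal configuration does not ``feel'' $\alpha$ until it crosses a threshold. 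For $r=2,3$ this threshold is at the top of the range ($\alpha = r-1$), so $g$ is globally constant; for $r\ge 4$ one must locate the first threshold and show it is at least $1+1/(r+3)$. I would prove this by exhibiting, for each $\alpha$ in the claimed range, a preference order attaining the $\alpha=0$ value, and separately showing no preference order does better — the lower bound on the threshold $1+1/(r+3)$ is where a genuine (if elementary) inequality must be checked, balancing the ``cost'' of the largest class against the product of the other $r-1$.

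Finally, (e) and (f) are purely about $g(r,0)$, the simple-hypergraph value. Part (e) is a finite computation: with $r=4$ and $\alpha=0$ the optimisation over preference orders on $4$ elements is over a finite combinatorial object, and $g(4,0)$ is the value of an explicit (small) optimisation problem whose solution is $0.3807\ldots$ — I would just carry out that optimisation, citing the explicit formula promised in~\S\ref{sec:pref}. Part (f), the asymptotic $g(r,0)\sim(\log r)/r$, is the one place where real asymptotic analysis is needed: here I would use the conjectured/proved explicit formula for $g(r,0)$ and extract its leading term as $r\to\infty$, which I expect to reduce to estimating something like $\max$ or $\min$ over a partition of a ``budget'' among $r$ coordinates, whose optimum behaves like $(\log r)/r$ by a standard concavity/entropy argument. \textbf{The main obstacle} I anticipate is part (d) — pinning down that the first point of non-constancy is at $\alpha = 1+1/(r+3)$ rather than somewhere slightly different requires understanding precisely which preference order is optimal at $\alpha=0$ and exactly when a competing order overtakes it; this is a delicate extremal argument rather than a routine estimate, and getting the constant $1/(r+3)$ exactly right (as opposed to merely $\Omega(1/r)$) is the crux.
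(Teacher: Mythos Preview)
Your outline captures the broad shape of the argument but is missing the central piece of machinery. In the paper, every part of Theorem~\ref{thm:elemg} is derived from the companion Theorem~\ref{thm:elem} about the auxiliary function $f(r,\theta)$, via the relation $g(r,\alpha)=-1/\log_r f(r,\beta(\alpha))$ of Definition~\ref{defn:g}; and $f(r,\theta)$ in turn is analysed not through preference orders directly but through the equivalent notion of \emph{covers} (Theorem~\ref{thm:h=f}). So monotonicity and continuity in~(a) come from those of $f$ and $\beta$; parts~(b), (c), (f) are read straight off the corresponding items of Theorem~\ref{thm:elem}; and part~(d) is proved by locating the number $\varphi_{r-1}$ up to which $f(r,\theta)$ is constant (Theorem~\ref{thm:varphi}) and then translating this, via the defining equation $\beta^\alpha=f(r,\beta)$, into a lower bound on the $\alpha$-range of constancy. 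The inequality $\delta>1/(r+3)$ drops out of two crude estimates on $\varphi_{r-1}$, not from a delicate comparison of competing preference orders as you suggest.

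Your plan for~(e) contains a concrete error. You write that with $r=4$ ``the optimisation over preference orders on $4$ elements is over a finite combinatorial object''; but the $4$ is the number of orders, not the size of the ground set~$[m]$, and $f(4,0)$ is an infimum over all~$m$. The paper's value $f(4,0)=0.0262\ldots$ (whence $g(4,0)=0.3807\ldots$) is obtained by sandwiching: the lower bound is the analytic quantity $w(3,\phi_3)$ from Lemma~\ref{lem:gmbound} and Theorem~\ref{thm:fge}, while the upper bound comes from a computer-generated $(3,\phi_3,10000)$-cover. There is no finite hand computation here, and without the cover framework of~\S\ref{sec:pref} you have no route to either side of the sandwich.
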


The fact that $g(2,\alpha)=1$ means the case $r=2$ of Theorem~\ref{thm:mainthm}
is the theorem of Alon and Krivelevich~\cite{AKr}, though without an
explicit bound on the error term.

Likewise, the fact that $g(r,r-1)=1/(r-1)$ means that, if $G$ is complete
(and we take $d=n^{r-1}$) then $\chi_l(G)=(1/(r-1)+o(1))\log_r
d=(1+o(1))\log_r n$, as noted at the outset of~\S\ref{subsec:rpart}.

As mentioned earlier, our motivation is to investigate whether the lower
bound on $\chi_l$ supplied by Proposition~\ref{cor:chil} for regular simple
$r$-graphs, namely $(1/(r-1)+o(1))\log_r d$, is tight. We also suggested
that, amongst all simple regular $r$-graphs,  ``random-like'' $r$-partite
ones would likely have lowest list-chromatic number. In the light of
Theorem~\ref{thm:randg}, most such $r$-graphs enjoy properties $I(r,n,d)$
and $D(r,n,d)$, so their list-chromatic number is given by
Theorem~\ref{thm:mainthm}. Now $0\le\alpha\le1$ for simple $r$-graphs, and
$g(r,\alpha)$ is constant in this range, so the question of whether the
above approach shows Proposition~\ref{cor:chil} to be tight now comes down
to the question of whether $g(r,0)=1/(r-1)$.

As can be seen from Theorem~\ref{thm:elemg}, $g(r,0)=1/(r-1)$ indeed holds
for $r=2$ and $r=3$, and so Proposition~\ref{cor:chil} is tight in these
cases. For $r\ge4$ we have been unable to determine the exact value of
$g(r,0)$, but we can prove that $g(r,0)>1/(r-1)$. Hence the bound in
Proposition~\ref{cor:chil} appears not to be tight --- indeed, we think it
more likely that the lower bound $(g(r,0)+o(1))\log_r d$ might hold in
general for all simple $r$-graphs of average degree~$d$.

It turns out that the reason why a gap emerges between
Theorem~\ref{thm:regg} and Proposition~\ref{cor:chil} only for $r\ge4$ is
that, for $r=2$, preference orders are more or less trivial, and even
for $r=3$ optimal preference orders are tightly constrained. It is only
when $r\ge4$ that there is room for more interesting preference orders to
exist; more detail appears in~\S\ref{sec:pref}.

As mentioned, we think that $(g(r,0)+o(1))\log_r d$ might be a lower bound
on $\chi_l(G)$ for every $r$-uniform simple hypergraph~$G$ of average
degree~$d$, and to prove this it would be enough to do it for $r$-partite
graphs. In order to obtain a lower bound it is necessary to show that there
is a list function $L:V(G)\to\mathcal{P}(\mathbb{N})$ with
$|L(v)|=(g(r,0)+o(1))\log_r d$ for all $v\in V(G)$, such that $G$ is not
$L$-chooseable. In practice the best lists for this job appear to be random
lists, such as in the proof of Theorem~\ref{thm:lb}, where the bound is
proved for $r$-graphs having property $D(r,n,d)$. We don't have such a
proof for all $r$-graphs, but we can prove a complementary result, namely,
that for any $d$-regular $r$-partite $r$-graph~$G$, if random lists of size
larger than $g(r,0)\log_r d$ are assigned, then $G$ {\em is}
$L$-chooseable. (It is necessary to impose a weak bound on $n$ in terms
of~$d$ for the usual reason that, if we make too many random choices, then bad
things are bound to happen.)

\begin{thm}\label{thm:ranlists}
  Let $\epsilon>0$ and $M>1$ be given. Let $G$ be a simple $d$-regular
  $r$-partite $r$-uniform hypergraph with $n\le d^M$ vertices in each
  class. For each $v\in V(G)$ let a list $L(v)$ of size
  $\ell=\lfloor(1+\epsilon)g(r,0)\log_r d\rfloor$ be chosen uniformly at
  random from a palette of size $t\ge\ell$, independently of other
  choices. Then, with probability tending to one as $d\to\infty$, $G$ is
  $L$-chooseable.
\end{thm}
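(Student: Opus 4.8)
The plan is to show that random lists of size $\ell=\lfloor(1+\epsilon)g(r,0)\log_r d\rfloor$ almost surely admit a proper colouring of $G$, by exposing the structure of $g(r,0)$ via preference orders and then running a greedy-type argument along a well-chosen vertex order. The key point is that $g(r,0)$ is, by Definition~\ref{defn:g}, the infimum (over preference orders, or equivalently over the relevant optimisation) of a quantity that encodes how efficiently one can ``reserve'' colours for the classes $V_1,\dots,V_r$; choosing $\ell$ just above $g(r,0)\log_r d$ means there is a small constant of slack, which will be spent absorbing error terms.

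First I would fix a near-optimal preference order $\pi$ realising $g(r,0)$ (up to $\epsilon/2$, say), and use it to define, for each colour $c$ in the palette and each vertex $v$ with $c\in L(v)$, a ``rank'' describing how willing $v$ is to take $c$ relative to its class. Because $L(v)$ is a uniformly random $\ell$-subset of a $t$-set, the joint distribution of these ranks across $V(G)$ is well-controlled: for a typical colour, the number of vertices in $V_i$ that regard $c$ as a ``top choice'' is concentrated around its expectation, which by the choice of $\ell$ is $d^{o(1)}$ — in particular much smaller than $d$. Second, I would expose the lists and, for each colour $c$, designate the class $i(c)$ according to $\pi$ and the realised list pattern, so that the set $W_i$ of vertices in $V_i$ whose entire list consists of colours designated to $i$ is small; the definition of $g(r,0)$ is precisely calibrated so that $\mathbb{E}|W_i|=o(n)$ (indeed $n\cdot d^{-c\epsilon}$ for some $c>0$), and a first-moment bound shows $W_i$ is small almost surely. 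Third, colour $V(G)\setminus\bigcup_i W_i$ greedily: every such vertex has, in its list, a colour designated to a class other than its own, and an edge is monochromatic only if all $r$ of its vertices take the common colour $c$, which can happen for at most one of the $r$ classes of $c$; a short union-bound or a careful ordering argument (processing vertices class by class according to the designation) kills all monochromatic edges. Finally, the leftover vertices $\bigcup_i W_i$ induce a sub-hypergraph that is sparse — here I would invoke that $G$ has few edges inside any small union of ``bad'' sets, or simply that $G$ is $d$-regular with $n\le d^M$, so that a set of size $n\cdot d^{-c\epsilon}$ spans $o(n)$ edges and can be coloured by a second, independent greedy pass using the remaining freedom in its lists.

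The main obstacle I expect is the third step: ensuring that the greedy colouring of the ``good'' vertices never creates a monochromatic edge. Two difficulties combine here. First, an edge $e$ has one vertex in each class, and a colour $c$ is designated to only one class, so naively $e$ is safe as soon as its vertex in the non-$i(c)$ classes avoids $c$ — but different vertices of $e$ may be forced towards different colours, and one must check that the designation scheme leaves each vertex at least one ``safe'' colour after conditioning on the (random) designations. Second, the argument must be robust to the $o(1)$ slack only, so one cannot afford to throw away more than a $d^{-c\epsilon}$ fraction of vertices at any stage; this forces the concentration estimates for the rank profiles of colours to be quantitative (Chernoff-type, using that each list is a uniform $\ell$-set and $\ell\to\infty$). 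I would handle this by ordering the colours and vertices according to $\pi$ and revealing choices in that order, so that when a vertex is coloured, the constraints imposed on it by already-coloured neighbours form an independent-ish set whose size is controlled by the degree and the rank profiles; the $D(r,n,d)$-style sparsity is not literally available (it is not among the hypotheses of this theorem), so instead I would lean on $d$-regularity plus $n\le d^M$ to bound, via a union bound over the $\le rn$ vertices and $\le rnd/r$ edges, the probability that any vertex is ever blocked on all $\ell$ colours.
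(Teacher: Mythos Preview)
Your proposal takes a route that is both different from the paper's and, as you yourself flag, has a genuine gap at the greedy step.

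The paper does not use a designation/forbidden-colour scheme followed by a greedy pass. Instead it runs the block algorithm of~\S\ref{sec:upperbound}: the palette is partitioned (deterministically, after padding) into $m=\delta\ell/k$ blocks, a near-optimal $(r,m)$-preference order $P$ with $f_P(0)=f(r,0,m)$ is fixed, and each vertex commits to its most preferred \emph{available} block. The task then reduces to showing that, for each block $B$, the set $X=X(B)$ of vertices committed to $B$ spans a $k$-degenerate subgraph; Lemma~\ref{lem:degen} then finishes. Here the randomness of the lists is used directly: one first conditions on the sizes $|X_i|$ (which, exactly as in the proof of Theorem~\ref{thm:ub}, satisfy $\prod_{i\ne i_X}|X_i|/n\le K/d$ for a small $K$), and then observes that $X_i$ is a \emph{uniformly random} subset of $V_i$ of this size. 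Because $G$ is simple, the $k$ edges through a vertex $v\in V_{i_X}$ meet the other classes in disjoint vertices, so the probability that any fixed $k$-set of edges through $v$ survives inside $X$ is at most $\prod_{i\ne i_X}(|X_i|/n)^k\le (K/d)^k$; a union bound over $\binom{d}{k}$ such $k$-sets and over the $rn\le rd^M$ vertices gives the $k$-degeneracy with high probability, provided $k$ is a large enough constant depending on $\epsilon$ and $M$.

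Your scheme, by contrast, attempts to assign each colour to a single class and then colour greedily. There are two concrete problems. First, the paper explicitly notes (end of~\S\ref{sec:preford}) that forbidden-colour schemes without the ``restrictive promise'' (block) mechanism are strictly weaker: Theorem~\ref{thm:basic} gets only $0.78\log_3 d$ for $r=3$ where $g(3,0)\log_3 d=0.5\log_3 d$. So even if your designation step were made precise, there is no reason to expect the resulting bad-set $W_i$ to satisfy $\mathbb{E}|W_i|\le n\cdot d^{-c\epsilon}$ for lists of size $(1+\epsilon)g(r,0)\log_r d$; the calibration of $g(r,0)$ is to the block algorithm, not to a one-colour-per-class designation. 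Second, your step~3 is indeed where the argument breaks: a ``good'' vertex $v\in V_i$ may have every list colour $c$ with $i(c)\ne i$, yet an edge through $v$ can still be monochromatic in such a colour $c$ if the vertex in $V_{i(c)}$ also picks $c$ --- designation to a class does not forbid that class from using the colour, and forbidding it brings you back to the weaker scheme. The block algorithm avoids this entirely: vertices in different blocks cannot conflict, and within a block the degeneracy bound handles everything.
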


It is somewhat curious, to us at least, that preference orders are used in
the proof of Theorem~\ref{thm:mainthm} in two entirely different ways, both
in the upper bound (obtained from a colouring algorithm designed around
preference orders --- this is how we first came across them), and also in
the lower bound (for a different reason). This ``coincidence'' is
reminiscent of the relationship with Property~B in the graph case.

As stated earlier, we define preference orders in~\S\ref{sec:preford} and
discuss them enough to be able to define the function~$g(r,\alpha)$. Then,
in~\S\ref{sec:upperbound} we describe the colouring algorithm and prove
Theorems~\ref{thm:ranlists} and~\ref{thm:ub}; the latter theorem is one
half of Theorem~\ref{thm:mainthm}, giving an upper bound for $\chi_l(G)$
when $G$ has property $D(r,n,d)$. A corresponding lower bound, for graphs
with property $I(r,n,d)$, is given by Theorem~\ref{thm:lb}
in~\S\ref{sec:lowerbound}, and this provides the other half of
Theorem~\ref{thm:mainthm}. The elementary probabilistic argument behind
Theorem~\ref{thm:randg} is given in~\S\ref{sec:randg}, and the twist needed
for Theorem~\ref{thm:regg} follows in~\S\ref{sec:regg}. Then,
in~\S\ref{sec:pref}, we examine preference orders in more detail, and
describe how to calculate, or at least to estimate, the
function~$g(r,\alpha)$; we put some effort into this since it is, of
course, at the heart of the paper. Finally in~\S\ref{sec:B} we comment
briefly on the relationship between preference orders and Property~B.

We use standard notation for intervals of real numbers, such as
$[0,1]=\{x\in\mathbb{R}: 0\le x\le 1\}$, and we denote by $[n]$ the set of
integers $\{1,2,\ldots,n\}$.

\section{Preference Orders}\label{sec:preford}

In this section we introduce the notion of preference orders, and define
$g(r,\alpha)$.

To motivate the ideas, consider the most basic case of our problem, where
$G$ is a simple $d$-regular $3$-uniform $3$-graph with $d$ vertices in each
class (that is, $n$=$d$): such a graph is precisely the graph of a Latin
square. As mentioned in~\S\ref{subsec:rpart}, $\chi_l(G)\le\log_3 d+2$, but
this bound holds as well for complete $3$-partite $3$-graphs. For a lower
bound, we have $\chi_l(G)\ge (1/2+o(1))\log_3 d$ from
Proposition~\ref{cor:chil}. The upper bound comes from forbidding each
colour on one of the vertex classes, chosen at random for each colour. To
improve the bound we must allow some colours to appear in {\em every}
class: we call these colours {\em free} and the other colours {\em
  forbidden}. Suppose, for each colour, we make it free with probability
$1-3q$ and otherwise forbid it on one of $V_1$, $V_2$ and $V_3$, with
probability $q$ each. A vertex $v\in V_i$ now chooses a non-free colour
from $L(v)$ if possible (meaning a colour forbidden on some $V_j$, $j\ne
i$), but if there are no such, it chooses a free colour.  Once again, $v$
has no available choice if every colour in $L(v)$ is forbidden on $V_i$,
and we want the expected number of such vertices to be small, say
$3dq^\ell<1/2$. But there is now another potential problem, which is the
presence of monochromatic edges; if each vertex of an edge chooses a free
colour (for each vertex this happens with probability $(1-3q)^\ell$) then
the colours chosen might be the same. The expected number of edges where
each vertex chooses a free colour is at most $d^2(1-3q)^\ell$ (we must
allow for the lists to be overlapping) so we require
$d^2(1-3q)^\ell<1/2$. Taking say $q=0.3028$ and $\ell=0.92\log_3d+2$ makes
both expectations small; hence $\chi_l(G)\le0.92\log_3d+2$.

To get a further improvement, we look for a strategy which will reduce the
likelihood of each vertex in an edge picking the same free colour. For each
of $V_1$, $V_2$ and $V_3$, decide an order of preference on the palette
$\bigcup_{v\in V(G)}L(v)$: denote these orderings by $<_1$, $<_2$
and~$<_3$. The triple $P=(<_1,<_2,<_3)$ is called a {\em preference order}.
Then the choice of $c(v)\in L(v)$ is made as follows: if $v\in V_i$, let
$c(v)$ be a non-free colour in $L(v)$ if one is available, else let $c(v)$
be the most preferred free colour according to the order~$<_i$. We should
design the orderings $<_1$, $<_2$ and~$<_3$ so that a colour preferred in
one class is deprecated in another. A good way to do this is in example
$P_c$ below. In this manner the likelihood of a monochromatic edge is
reduced and, in fact, using the preference order~$P_c$ we obtain
$\chi_l(G)\le0.78\log_3d+3$, as verified in Theorem~\ref{thm:basic}; this
is the best bound we have for Latin square graphs in general, but the
algorithm works only for graphs with a small number of vertices.

We can make further progress if we know something of the structure
of~$G$. We cannot demand that every set of a certain size is independent,
but we can hope to describe sparse sets, and that is what property
$D(r,n,d)$ is doing. For our algorithm to make use of these sparse sets, we
modify it slightly so that $v$ does not commit immediately to the most
preferred free colour in $L(v)$ but, rather, $v$ promises to restrict its
choice to within some small named subset of similarly preferred colours
in~$L(v)$.  If $P$ is well designed then the collection of vertices promising
to use the same subset spans a sparse subgraph, and the colouring can then
be completed (details are in~\S\ref{sec:upperbound}).

What is a good design of preference order~$P$? We assign a value to each~$P$
(Definition~\ref{defn:fP}), and pick the~$P$ of best value: this value is
specifically designed so that the number of vertices choosing a given
colour ties up with the kind of sparse sets guaranteed by property
$D(r,n,d)$.

Are there other ways to use a preference order in a colouring algorithm?
In the simplest conceivable algorithm, each vertex just commits at once to
the most preferred colour in its list. Perhaps surprisingly, such an
algorithm is weak (giving no improvement over $\log_rd$). To make a gain we
need either to use forbidden colours, as we do in Theorem~\ref{thm:basic},
or to incorporate the method of restrictive promises, as we do elsewhere,
using the algorithm set out in detail in~\S\ref{sec:upperbound}. This
algorithm makes no use of forbidden colours; it turns out these give no
extra benefit when restrictive promises are used.

In summary, a preference order is, more or less, a specification of $r$
orders of preference on the palette, one order for each~$V_i$. If the
orderings are all the same then the same colours will be preferred in each
class and a proper colouring is unlikely to be achieved. When $r=2$, and $G$
is a bipartite graph, then, intuitively, one would expect the best palette
order for $V_2$ to be the reverse of that on~$V_1$, and indeed this is
the case --- in fact this method reproduces known results about Property~B
(see~\S\ref{sec:B}). What constitutes a good preference order for $r\ge3$
is what we shall study and, as hinted at before, whereas it is easy to
answer the question for $r=3$, the answer for $r\ge4$ is surprisingly
elusive.

Let us get down to specifics.

\begin{defn}\label{defn:TO}
  Let $<$ be a total ordering of the set $[m]$. Given $k\in[m]$, the {\em
    relative position} ${\rm rpos}_<(k)$ of $k$ in the ordering is $1/m$
  times the number of elements less than or equal to $k$. So ${\rm
    rpos}_<:[m]\to\{1/m,2/m,\ldots,m/m\}$ is a bijection and
  $$
  {\rm rpos}_<^{-1}(1/m)<{\rm rpos}_<^{-1}(2/m) <\cdots<{\rm rpos}_<^{-1}(1)\,.
  $$
\end{defn}

\begin{defn}\label{defn:PO}
  An $(r,m)$-{\em preference order} is an $r$-tuple $P=(<_1, \ldots,<_r)$
  where $<_i$ is a total ordering of~$[m]$, $1\le i\le r$. Abusing
  notation, we write $x\in P$ if $x\in(0,1]^r$ and there is some $k\in [m]$
  such that $x=({\rm rpos}_{<_1}(k),\ldots,{\rm rpos}_{<_r}(k))$.
\end{defn}

Thus $x\in P$ means $x$ is the tuple of relative positions of some element
of~$[m]$. Notice that $\{x:x\in P\}$ determines $P$ to within a permutation
of~$[m]$, because each $x\in P$ tells us the relative position in each
order of some element $k\in[m]$, but we do not know which element. Since
the actual labels of the elements in the ground set $[m]$ are usually
unimportant (for example, when using $P$ in the algorithm above we
generally begin by randomly mapping the palette to~$[m]$), we often think
of the set $\{x:x\in P\}$ as specifying~$P$.

Here are three examples of preference orders. The {\em identity} ordering
is the ordering $1<2<3<\cdots< m$.

\begin{itemize}
\item[$P_a$ ] Let $r=2$, let $<_1$ be the identity ordering, and let $<_2$
  be the reverse of $<_1$; that is, $m<_2(m-1)<_2\cdots<_2 1$. Then
  $$
  \{x:x\in P_a\}=\{(k/m, 1+1/m-k/m): k\in [m]\}\,.
  $$
\item[$P_b$ ] Let $r=3$ and let $m=3p$ be a multiple of three. Let $<_1$ be
  the identity ordering and let $<_2$, $<_3$ be ``rotations'' of
  $<_1$ by $p$ and by $2p$ elements, meaning that
  \begin{eqnarray*}
    2p+1\,<_2\cdots<_2\,3p\,<_2\,1\,<_2\cdots<_2\,p\,
    <_2\,p+1\,<_2\cdots<_2\,2p\\
    p+1\,<_3\cdots<_3\,2p\,<_3\,2p+1\,<_3\cdots<_3\,3p\,
    <_3\,1\,<_3\cdots<_3\,p
    \,.
  \end{eqnarray*}
  Then
  \begin{eqnarray*}
    \{x:x\in P_b\}&=&\{(i/m,1/3+i/m,2/3+i/m): i\in[p]\}\\
    &&\cup\ \, \{(1/3+i/m,2/3+i/m,i/m): i\in[p]\}\\
    &&\cup\ \, \{(2/3+i/m,i/m,1/3+i/m): i\in[p]\}\,.
  \end{eqnarray*}
\item[$P_c$ ] This is the same as $P_b$ except that, in each of $<_1$,
  $<_2$ and $<_3$ we reverse the order of bottom third of the elements,
  that is, we reverse the order of those elements with relative positions
  $1/m$ to $p/m$.  So
  \begin{eqnarray*}
    p\,<_1\cdots<_1\,1\,<_1\,p+1\,<_1\cdots<_1\,2p\,
    <_1\,2p+1\,<_1\cdots<_1\,3p\\
    3p\,<_2\cdots<_2\,2p+1\,<_2\,1\,<_2\cdots<_2\,p\,
    <_2\,p+1\,<_2\cdots<_2\,2p\\
    2p\,<_3\cdots<_3\,p+1\,<_3\,2p+1\,<_3\cdots<_3\,3p\,
    <_3\,1\,<_3\cdots<_3\,p
    \,,
  \end{eqnarray*}
  and
  \begin{eqnarray*}
    \{x:x\in P_c\}&=&\{(1/3+1/m-i/m,1/3+i/m,2/3+i/m): i\in[p]\}\\
    &&\cup\ \, \{(1/3+i/m,2/3+i/m,1/3+1/m-i/m): i\in[p]\}\\
    &&\cup\ \, \{(2/3+i/m,1/3+1/m-i/m,1/3+i/m): i\in[p]\}\,.
  \end{eqnarray*}
\end{itemize}

It turns out that $P_c$ is an essentially optimal choice of preference
order when $r=3$. The next definition defines a parameter of a preference
order, designed to measure its effectiveness in our colouring
algorithm. The parameter captures the way the algorithm makes use of
various independent sets. The form of the definition reflects the
properties of sparse sets in the $r$-graphs we are interested in, set
out in properties $I(r,n,d)$ and $D(r,n,d)$. This is explained in a little
more detail just before Theorem~\ref{thm:ub}.

Analogously to the definition of $i_X$ for a set $X\subset V(G)$, we
define, for an $r$-tuple $x=(x_1,x_2,\ldots,x_r)\in[0,1]^r$
$$
i_x = \min \{i\,: x_i = \max \{x_j: 1\le j\le r\}\,\}\,,
$$
that is, $i_x$ is a specific index of a largest $x_j$.

\begin{defn}\label{defn:fP}
Let $P$ be an $(r,m)$-preference order. Let $0\le \theta\le 1/r$. Then
$$
f_P(\theta)=\max\,\Bigl\{\,\prod_{i\ne  i_x}x_i:\, x\in[\theta,1]^r,\, x\in
  P\,\Bigr\}\,.
$$
\end{defn}

Observe that $f_P(\theta)$ depends only on $\{x:x\in P\}$, supporting the
earlier remark that it is this set that matters rather than~$P$ itself.
Observe too that the set in the definition is non-empty, because there are
fewer than $m/r$ numbers $k\in[m]$ with ${\rm rpos}_{<_1}(k)<1/r$, and
likewise for $<_2,\ldots,<_r$, so there is some $k$ with ${\rm
  rpos}_{<_i}(k)\ge1/r\ge\theta$ for all~$i$. That is, there is some
$x\in[1/r,1]^r$ with $x\in P$. In particular, $f_P(\theta)\ge (1/r)^{r-1}$.

Notice that, by definition, $f_P(\theta)$ is non-increasing in~$\theta$.
The value we are mostly interested in is $f_P(0)$, the maximum of
$\prod_{i\ne i_x}x_i$ over all $x\in[0,1]^r$. This value, when $\theta=0$,
relates to the case $0\le\alpha\le1$ in Theorem~\ref{thm:mainthm}. The
reader who wishes, from now on, to consider only $\theta=0$ will not miss
out on anything of substance.

It is necessary to allow larger $\theta$ in order to handle
larger~$\alpha$. Somewhat vaguely, this is because as $\alpha$ increases
to~$r-1$, meaning $d$ increases to $n^{r-1}$, then the range narrows of those
$x\in P$ that play an interesting role, and $\theta$ captures this
reduced range. For more, we refer to the proofs of Theorems~\ref{thm:ub}
and~\ref{thm:lb}.

Consider the three examples $P_a$, $P_b$ and $P_c$ above. For $x\in P_a$ we
have $x=(x_1,x_2)=(k/m,1+1/m-k/m)$ for some $k\in[m]$. Then
$x_1+x_2=1+1/m$, so one of $x_1$, $x_2$ is at most $1/2+1/2m$ and the other
is at least $1/2+1/2m$. Thus $i_x$ is the index of the larger co-ordinate
and $\prod_{i\ne i_x}x_i=l/m$ for some $l\le (m+1)/2$. Therefore
$f_{P_a}(0)=1/2$ if $m$ is even and $f_{P_a}(0)=1/2+1/2m$ if $m$ is
odd. Moreover it can be seen that $f_{P_a}(\theta)=f_{P_a}(0)$ for
$0\le \theta\le 1/r=1/2$, since the maximum value of $\prod_{i\ne
  i_x}x_i=l/m$ is always attained by some $x$ with $x\in[1/2,1]^2$.

For $x=(x_1,x_2,x_3)\in P_b$ it can be seen that one co-ordinate exceeds
$2/3$ and the other two are $i/m$ and $1/3+i/m$ for some $i\le p=m/3$. Thus
$\prod_{i\ne i_x}x_i=(i/m)(1/3+i/m)$ and $f_{P_b}(0)=2/9$. The maximum is
achieved by some $x$ with $\min x_i\ge 1/3$ and so, once again,
$f_{P_b}(\theta)=f_{P_b}(0)$ for $0\le \theta\le 1/r=1/3$.

For $x=(x_1,x_2,x_3)\in P_c$, one co-ordinate exceeds $2/3$ and the other
two are $1/3+1/m-i/m$ and $1/3+i/m$ for some $i\le p=m/3$. Thus $\prod_{i\ne
  i_x}x_i=(1/3+1/m-i/m)(1/3+i/m)$ and $f_{P_c}(0)=1/9+1/3m$. The maximum is
achieved by some $x$ with $\min x_i\ge 1/3$ and so, once again,
$f_{P_c}(\theta)=f_{P_c}(0)$ for $0\le \theta\le 1/r=1/3$.

In the three examples, $f_P(\theta)$ is constant for $0\le\theta\le
1/r$. This is a reflection of the fact, noted in~\S\ref{subsec:listcc},
that the overall situation is more straightforward for $r\le 3$ and new
phenomena appear only when $r\ge4$.

It turns out that the best preference orders for the colouring algorithm
are those with the lowest values of $f_P$. This leads us to the next
definition.

\begin{defn}\label{defn:fg}
Let $r\ge2$. For $0\le\theta\le 1/r$ we define
\begin{eqnarray*}
f(r,\theta,m)\,&=&\,\min\,\{\,f_P(\theta):\, \mbox{$P$ is an $(r,m)$-preference
  order}\}\\
\mbox{and\quad}f(r,\theta)\,&=&\,\inf\,\{\,f(r,\theta,m):\, m\in\mathbb{N}\}\,.
\end{eqnarray*}
\end{defn}

It was noted that $f_P(\theta)$ is non-increasing in~$\theta$, and hence so
are $f(r,\theta,m)$ and $f(r,\theta)$. Moreover we saw that
$f_P(\theta)\ge(1/r)^{r-1}$ for all $P$
and~$\theta$, so $f(r,\theta)\ge(1/r)^{r-1}$ for all~$\theta$. The examples
$P_a$ and $P_c$ show that $f(2,\theta)\le 1/2$ for $0\le \theta\le 1/2$ and
$f(3,\theta)\le 1/9$ for $0\le \theta\le 1/3$. Hence equality holds in each
of these cases. In particular, when $r=2,3$, then $f(r,\theta)$ is constant
for $0\le \theta\le 1/r$.

We are, at last, in a position to define $g(r,\alpha)$. To do this, we need
to relate a value of $\theta$ to each~$\alpha$. Formally,
this special value is
$\beta(\alpha)=\sup\{\theta:\theta^\alpha\le f(r,\theta)\}$, which exists
for $\alpha>0$. But it follows from simple
properties of 
$f(r,\theta)$, given below in Theorem~\ref{thm:elem}\,(a)(b), that
$\beta(\alpha)$ is the unique solution
to $\theta^\alpha=f(r,\theta)$. So,
anticipating those properties, we take the simpler statement as the definition.

\begin{defn}\label{defn:g}
Let $r\ge2$ and $0<\alpha\le r-1$. Define
$\beta=\beta(\alpha)$ by $\beta^\alpha=f(r,\beta)$.
Then we define $g(r,\alpha)=-1/\log_r(f(r,\beta))$. Note that if
$\alpha=\log_n d$ then
\begin{equation}
f(r,\beta)^{g(r,\alpha)}\,=\,{1\over r}\,,
\qquad
f(r,\beta)^{g(r,\alpha)\log_r d}\,=\,{1\over d}
\qquad\mbox{and}\qquad
\beta^{g(r,\alpha)\log_r d}\,=\,{1\over n}
\,.\label{eqn:fng}
\end{equation}
\end{defn}

Observe that $g(r,0)$ is not defined by this statement but, since
$g(r,\alpha)$ is constant for $0<\alpha\le1$ (see Theorem~\ref{thm:elemg})
then we define $g(r,0)$ to equal this constant value.

We remark that $\beta(r-1)=1/r$ because $f(r,1/r)=(1/r)^{r-1}$
(Theorem~\ref{thm:elem}\,(b)). Moreover $\beta(\alpha)$ is strictly 
increasing: for if $\alpha_1<\alpha_2$ and
$\beta(\alpha_1)\ge\beta(\alpha_2)$, then
$f(r,\beta(\alpha_1))=\beta(\alpha_1)^{\alpha_1}>\beta(\alpha_1)^{\alpha_2}
\ge\beta(\alpha_2)^{\alpha_2}=f(r,\beta(\alpha_2))$, contradicting the fact
that $f(r,\theta)$ is decreasing (Theorem~\ref{thm:elem}\,(a)).

Notice how the expression $g(r,\alpha)\log_r d$, appearing in
Theorem~\ref{thm:mainthm}, appears also in~(\ref{eqn:fng}). In the proof of
the theorem, we try to appeal to~(\ref{eqn:fng}) directly rather than to
the definition of $g(r,\alpha)$.

The next theorem lists some basic properties of $f(r,\theta)$, in the same
way that Theorem~\ref{thm:elemg} lists some of those of $g(r,\alpha)$. In
particular it shows that $f(r,\theta)$ is constant for small~$\theta$,
which is the reason $g(r,\alpha)$ is constant for small $\alpha$.

\begin{thm}\label{thm:elem}
  For each $r\in\mathbb{N}$, $r\ge2$, the function $f(r,\theta)$ maps
  $[0,1/r]$ to $[0,1]$ as follows:
  \begin{itemize}
  \item[(a)] $f(r,\theta)$ is continuous and decreasing in $\theta$,
  \item[(b)] $f(r,1/r)= (1/r)^{r-1}$,
  \item[(c)] for $r>2$, $f(r,\theta)\le f(r-1,\theta)$,
  \item[(d)] $f(2,\theta)=1/2$ for $0\le\theta\le1/2$ and
    $f(3,\theta)=1/9$ for $0\le\theta\le1/3$, 
  \item[(e)] for $r\ge 4$, $f(r,\theta)$ is constant for
    $0\le\theta\le (1-1/r)e^{-r+1}$, 
  \item[(f)] $f(4,0)=0.0262\ldots$, and
  \item[(g)] $((r-1)/er)^{r-1}\le f(r,0)\le (r-1)!/r^{r-1}$.
  \end{itemize}
\end{thm}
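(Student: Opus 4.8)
\emph{Plan of proof.} The strategy is to reduce parts (a), (d) and (e) to a good understanding of the single value $f(r,0)$, dispatch the upper bounds (b), (c), (g) by explicit constructions, obtain the easy lower bounds directly and the bound in (g) by an averaging argument, and finally isolate the two delicate ingredients: continuity of $f(r,\theta)$ on the range where $f(r,\theta)<\theta$, and the sharp estimates underlying (e) and (f).

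\emph{Reduction.} Monotonicity in (a) is immediate, since each $f_P(\theta)$ is non-increasing. Two elementary observations then do a lot of work. First, if $\theta<\theta'$ and $x\in P\cap[\theta,1]^r$ has a coordinate $x_j<\theta'$, then $\prod_{i\ne i_x}x_i<\theta'$: if $j\ne i_x$ then $x_j$ is one of the factors, and if $j=i_x$ then every coordinate is $<\theta'$, so $\prod_{i\ne i_x}x_i<(\theta')^{r-1}\le\theta'$. Hence $f_P(\theta)\le\max\{f_P(\theta'),\theta'\}$, and therefore $f(r,\theta)\le\max\{f(r,\theta'),\theta'\}$. Second, if $\prod_{i\ne i_x}x_i\ge c$ then every coordinate of $x$ is at least $c$ (each non-maximal coordinate is at least the product divided by the remaining factors, all of which are $\le 1$). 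It follows that a point witnessing $f_P(0)\ge c$ already lies in $[c,1]^r$, so $f(r,\theta)\ge\theta$ if and only if $f(r,0)\ge\theta$. Combining, for $\theta<\theta'\le f(r,0)$ we get $f(r,\theta')\ge\theta'$, hence $f(r,\theta)\le f(r,\theta')$, and with monotonicity $f(r,\cdot)$ is constant on $[0,f(r,0)]$. This already yields the constancy statement in (d), the constancy in (e) as far as $\theta=f(r,0)$, and continuity on $[0,f(r,0)]$; what remains is to locate $f(r,0)$, to extend the constancy in (e) out to $(1-1/r)e^{-r+1}$, and to prove continuity on $(f(r,0),1/r]$.

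\emph{Upper bounds and part (c).} For (b) I would use a block construction: partition $[m]=B_1\cup\cdots\cup B_r$ with $|B_i|=m/r$, let $<_i$ place $B_i$ in positions $1,\dots,m/r$, and place the $r$ elements $k_1,\dots,k_r$ (where $k_j$ is the top of $B_j$ in $<_j$) into positions $m/r+1,\dots,m/r+r-1$ of each $<_i$. Then the only points of $P$ in $[1/r,1]^r$ are the $k_j$, each having all coordinates in $[1/r,\,1/r+(r-1)/m]$, so $f_P(1/r)\le(1/r+(r-1)/m)^{r-1}\to(1/r)^{r-1}$. The upper bounds in (d) are the computations for $P_a$ and $P_c$ already in the text. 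For (g) I would take $<_i$ to be the identity order rotated by $(i-1)m/r$ places (the generalisation of $P_b$): the relative-position vector of element $k$ runs, cyclically, through $k/m+j/r$ for $0\le j\le r-1$, so the product of all but the largest of these is $\prod_{j=0}^{r-2}(k/m+j/r)$, maximised within a block at $k/m=1/r$ with value $\prod_{j=1}^{r-1}(j/r)=(r-1)!/r^{r-1}$. For (c), given an $(r-1,m)$-preference order $Q$ I would extend it by choosing $<_r$ so that ${\rm rpos}_{<_r}(k)\le\max_{i<r}{\rm rpos}_{<_i}(k)$ for every $k$; such an order exists by Hall's theorem, since for every threshold $t$ the number of $k$ with $\max_{i<r}{\rm rpos}_{<_i}(k)<t$ is at most $|\{k:{\rm rpos}_{<_1}(k)<t\}|=\lceil mt\rceil-1<mt$. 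For this $P$, coordinate $r$ is never the unique maximum, so for $x\in P\cap[\theta,1]^r$ one has $i_x\le r-1$ and $\prod_{i\ne i_x}x_i={\rm rpos}_{<_r}(k)$ times the corresponding product in $Q$, hence $\le f_Q(\theta)$; taking infima gives $f(r,\theta)\le f(r-1,\theta)$.

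\emph{Lower bounds and the obstacles.} For the $r=3$ case of (d) and for (b) I would use that every $(r,m)$-preference order has a point in $[1/r,1]^r$ (noted after Definition~\ref{defn:fP}); its $r-1$ smallest coordinates are all $\ge1/r$, so $f_P(1/r)\ge(1/r)^{r-1}$, giving $1/9$ when $r=3$ and matching $P_c$. For $r=2$ in (d), here $\prod_{i\ne i_x}x_i=\min(x_1,x_2)$, and the sets $\{k:{\rm rpos}_{<_i}(k)\ge1/2\}$ have more than $m/2$ elements for $i=1,2$, hence meet. For the lower bound in (g) I would argue by averaging: if $\prod_{i\ne i_x}x_i<c$ for every $k$, then $\prod_i{\rm rpos}_{<_i}(k)<c\max_i{\rm rpos}_{<_i}(k)$, so $\sum_i\sum_k\log{\rm rpos}_{<_i}(k)-\sum_k\log\max_i{\rm rpos}_{<_i}(k)<m\log c$; now $\sum_k\log{\rm rpos}_{<_i}(k)=\log(m!/m^m)\ge-m$ by Stirling, while a union bound gives $|\{k:\max_i{\rm rpos}_{<_i}(k)\ge s\}|\le rm(1-s)+r$, whence $\sum_k\log\max_i{\rm rpos}_{<_i}(k)\le-m\bigl(1+(r-1)\log(1-1/r)\bigr)+o(m)$, and substituting forces $\log c>-r+1+(r-1)\log(1-1/r)$, i.e.\ $c\ge((r-1)/(er))^{r-1}$ in the limit. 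The genuinely hard points, which I expect to be the crux, are the following. Part (f) requires solving the $r=4$ optimisation essentially exactly, which is done in~\S\ref{sec:pref} by pinning down the optimal $(4,m)$-preference orders, yielding $f(4,0)=0.0262\ldots$. Part (e) requires the strictly sharper bound $f(r,0)\ge(1-1/r)e^{-r+1}$ together with the fact that $f(r,\theta)=f(r,0)$ on all of $[0,(1-1/r)e^{-r+1}]$, which amounts to the structural claim that every (near-)optimal preference order has its extremal points with smallest coordinate at least $(1-1/r)e^{-r+1}$; the union-bound averaging above is too crude for this, and one must exploit the permutation structure more carefully (restricting the averaging to elements all of whose coordinates exceed $\theta$, and using that distinct orders' ``top'' sets can be almost but not fully disjoint). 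Finally, continuity on $(f(r,0),1/r]$ is subtle precisely because there $f(r,\theta)<\theta$ and the inequality $f(r,\theta)\le\max\{f(r,\theta'),\theta'\}$ is vacuous; left-continuity follows because $f(r,\cdot)$ is a decreasing limit (over blow-ups, which do not change $f_P$) of left-continuous non-increasing functions, but right-continuity in this range appears to need a compactness argument passing to a limiting ``fractional'' preference order, namely a probability space equipped with $r$ measure-preserving maps to $[0,1]$.
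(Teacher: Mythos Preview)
Your reduction argument is nice and correctly shows that $f(r,\cdot)$ is constant on $[0,f(r,0)]$, but the plan for part~(e) contains a genuine error: the ``strictly sharper bound $f(r,0)\ge(1-1/r)e^{-r+1}$'' that you say is required is \emph{false}. For $r=4$, part~(f) gives $f(4,0)=0.0262\ldots$ whereas $(1-1/4)e^{-3}=0.0373\ldots$, so the interval $[0,(1-1/r)e^{-r+1}]$ strictly contains $[0,f(r,0)]$ and cannot be reached by your witness-lies-in-$[f(r,0),1]^r$ argument. (Note that your own reduction already shows the two things you list as jointly needed are not independent: if $f(r,0)\ge(1-1/r)e^{-r+1}$ held, constancy on $[0,(1-1/r)e^{-r+1}]$ would follow immediately.) The paper's proof of~(e) uses a different mechanism. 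It first reformulates $f(r,\theta)$ as a minimum over \emph{covers} (perfect matchings on a prescribed vertex set in $(\theta,1-1/r]$), showing $f(r,\theta)=h(r-1,\theta)$. It then proves (Theorem~\ref{thm:varphi}) that any near-optimal $(r-1,\varphi_{r-1},n)$-cover can be extended to an $(r-1,\theta,n+\ell)$-cover for $\theta<\varphi_{r-1}$ by repeatedly adjoining the edge consisting of the new smallest vertex together with the $r-2$ new largest vertices; the defining property of $\varphi_{r-1}$ is precisely that this edge has product below $h(r-1,\theta)$ and so does not raise $h(Q)$. This yields constancy on $[0,\varphi_{r-1}]$, and then a short calculation combining the definition of $\varphi_{r-1}$ with the geometric-mean lower bound gives $\varphi_{r-1}\ge(1-1/r)e^{-r+1}$. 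The point is that constancy extends \emph{beyond} $f(r,0)$ for a constructive reason, not because of a lower bound on $f(r,0)$.

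Two smaller comments. Your continuity plan is incomplete: the inf of left-continuous non-increasing functions need not be left-continuous, and the compactness sketch for right-continuity is vague. The paper avoids this entirely: the cover reformulation gives Lipschitz continuity directly (Lemma~\ref{lem:cont}), since passing from an $(r-1,\theta,n)$-cover to the similar $(r-1,\theta',n)$-cover perturbs each vertex by $O(|\theta-\theta'|)$. For part~(c), the Hall's-theorem detour is unnecessary: simply drop $<_r$ from an $(r,m)$-preference order $P$ to get an $(r-1,m)$-preference order $P'$; for any $x\in P\cap[\theta,1]^r$ one has $\prod_{i\ne i_x}x_i\le\prod_{i\ne i_{x'},\,i<r}x_i$ regardless of whether $i_x=r$, so $f_P(\theta)\le f_{P'}(\theta)$ and hence $f(r,\theta,m)\le f(r-1,\theta,m)$.
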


Theorems~\ref{thm:elemg} and~\ref{thm:elem} are proved in~\S\ref{sec:pref}.

\section{A list colouring algorithm and some upper 
bounds}\label{sec:upperbound}

In order to prove $\chi_l(G)\le \ell$ for some $\ell$, we need an algorithm
that will colour $G$ whenever the vertices are given lists of
$\ell$~colours each. 

We start with a proof of an upper bound for Latin square graphs, mentioned
earlier in~\S\ref{sec:preford}, which uses preference orders in an
elementary way. The proof makes no use of the structure of the graph and
does not, in fact, require simplicity. It does make use of randomization.

\begin{thm}\label{thm:basic}
  Let $G$ be a $d$-regular 3-partite 3-graph with vertex classes of
  size~$d$. Then $\chi_l(G)\le 0.78\log_3d+3$.
\end{thm}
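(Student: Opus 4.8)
The plan is to use a random preference order based on $P_c$, combined with the "free/forbidden" device sketched in~\S\ref{sec:preford}, and to bound the expected number of failures by a first-moment argument. Fix $\ell=\lfloor 0.78\log_3 d\rfloor + 3$ and let the palette be the union of all lists. Choose an integer $m$ divisible by $3$ that is large compared to the palette size, and pick a uniformly random injection from the palette into $[m]$; this transports the preference order $P_c=(<_1,<_2,<_3)$ on $[m]$ to a random preference order on the palette. Independently, declare each colour (equivalently, each element of $[m]$ in one of the three blocks) \emph{free} with some probability $1-3q$ and otherwise \emph{forbidden} on exactly one of $V_1,V_2,V_3$, with probability $q$ each; here $q$ is a constant to be optimised (the discussion suggests something near $0.3$, but it will be pinned down by the two inequalities below). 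The colouring rule for $v\in V_i$ is: if $L(v)$ contains a colour forbidden on some $V_j$ with $j\ne i$, pick (say, the most $<_i$-preferred) such colour; otherwise pick the $<_i$-most-preferred free colour in $L(v)$.

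The two ways this can fail are: (i) some vertex $v\in V_i$ has \emph{every} colour of $L(v)$ forbidden precisely on $V_i$, so it has no legal choice; and (ii) some edge $e=\{v_1,v_2,v_3\}$, with $v_i\in V_i$, ends up monochromatic. For (i): a fixed colour is forbidden on $V_i$ with probability $q$ independently, so $\Pr(v\text{ fails})=q^\ell$, and the expected number of such $v$ is $3dq^\ell$. For (ii): if $e$ is monochromatic then every $v_i$ picked the same colour $c$. If $c$ were forbidden on some $V_j$ then $v_j$ (with $j$ that index) would never pick it, so $c$ must be free; moreover each $v_i$ only picks a free colour when \emph{all} of $L(v_i)$ is free-or-forbidden-on-$V_i$, and among free colours it picks its $<_i$-maximum. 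The key point — this is where $P_c$ earns its keep — is that a colour that is $<_1$-maximal among a given free set tends to be $<_2$- and $<_3$-small, because the three orders of $P_c$ are rotations with a reversed bottom third. Quantitatively, the structure of $\{x:x\in P_c\}$ (one coordinate exceeding $2/3$, the other two being $1/3\pm\Theta(i/m)$) forces that the probability that a fixed free colour $c$ is simultaneously the $<_i$-maximum of the free part of $L(v_i)$, for all three $i$ at once, is much smaller than the naive $(1-3q)^{3\ell}$; summing over the $\le d^2$ edges and the $\le m$ candidate colours gives an expected number of monochromatic edges bounded by $d^2$ times roughly $f_{P_c}(0)^{\,\ell}(1-3q)^{?}$-type quantity, which with $f_{P_c}(0)=1/9+o(1)$ and $\ell\approx 0.78\log_3 d$ is $o(1)$.

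Concretely I would carry out the steps in this order. First, set up the random injection and the free/forbidden split, and write down the colouring rule precisely, checking it always produces \emph{some} colour unless event (i) occurs. Second, handle (i): show $\mathbb E[\#\text{fail}]=3dq^\ell$ and note this is $<1/3$ once $q^\ell < 1/(9d)$, i.e.\ $\ell \ge -\log_3(9d)/\log_3 q = (\log_3(1/q))^{-1}\log_3 d + O(1)$. Third, handle (ii): for a fixed edge $e$ and fixed target colour $c$, condition on the image of $c$ in $[m]$ lying in a particular position (so its relative positions $x=(x_1,x_2,x_3)\in P_c$ are determined), and bound the probability that, for each $i$, colour $c$ lies in $L(v_i)$, is free, and is $<_i$-above every other free colour of $L(v_i)$; the last condition has probability at most $\bigl(\text{(fraction of palette that is }<_i\text{-below }c)\cdot(1-3q) + 3q\bigr)^{\ell-1}\le (x_i + 3q)^{\ell-1}$ roughly, and multiplying the three and using $x\in P_c$ (so $\prod_{i\ne i_x}x_i\le f_{P_c}(0)=1/9+o(1)$ while the remaining factor $x_{i_x}+3q\le 1$) gives the bound. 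Fourth, sum over $c$ (at most $m$ choices, but the $m$'s cancel because only the relative position matters — here I'd average over the random injection to replace the count by a product-measure estimate, so no $m$ appears) and over the $\le d^2$ edges, and verify the total is $o(1)$ for $\ell \le 0.78\log_3 d + 3$ with the chosen $q$. Fifth, combine: both expectations are $<1/2$, so some outcome avoids both failure modes, proving $L$-choosability and hence $\chi_l(G)\le 0.78\log_3 d+3$.

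The main obstacle will be step three: getting the monochromatic-edge bound down to genuinely $f_{P_c}(0)^\ell\approx 9^{-\ell}$ rather than the cruder $(1-3q)^{3\ell}$. The subtlety is that "being the $<_i$-maximum free colour in $L(v_i)$" is an event about \emph{all} of $L(v_i)$, not just about $c$, and it is not independent across $i=1,2,3$ because the same random injection orders all three; one must argue that, once the relative-position triple $x$ of $c$ is fixed, the events "no other free colour of $L(v_i)$ beats $c$ in $<_i$" become conditionally independent across $i$ (the other $\ell-1$ colours of each list are placed independently, and "beating $c$ in $<_i$" depends only on that colour's $i$-th relative position), so the product bound $\prod_i (x_i+3q)^{\ell-1}$ is legitimate, and then $x\in P_c$ supplies $\prod_{i\ne i_x} x_i \le f_{P_c}(0)$. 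Getting the bookkeeping of "free," "forbidden on $V_j$," overlapping lists, and the $\ell$ vs.\ $\ell-1$ exponents exactly right — and checking that $q\approx 0.3028$, $\ell\approx 0.78\log_3 d$ really do make both $3dq^\ell$ and $d^2 f_{P_c}(0)^{\ell}$ small — is the routine-but-delicate part I would defer to the actual proof.
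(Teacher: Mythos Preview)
Your setup, randomisation, and failure analysis for type~(i) all match the paper. The genuine gap is in step three, exactly at the point you flag as the main obstacle: the claimed conditional independence across $i=1,2,3$ is false. The three lists $L(v_1),L(v_2),L(v_3)$ are chosen by an adversary and may share colours other than $c$; for such a shared colour $\gamma'$, its image under the single random injection determines its position in all three orders simultaneously, so the events ``$\gamma'$ does not beat $c$ in $<_i$'' are certainly not independent across~$i$. Your justification (``the other $\ell-1$ colours of each list are placed independently'') tacitly assumes the three lists are disjoint outside~$c$, which need not be true. (Incidentally, the per-colour bound should be $q+(1-3q)x_i$, not $3q+(1-3q)x_i$: if a colour of $L(v_i)$ is forbidden on some $V_j$ with $j\ne i$, then $v_i$ would have chosen it over any free colour, so only ``forbidden on $V_i$'' is compatible with $c(v_i)=c$.)

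The paper resolves this not via independence but via a structural property of $P_c$ that goes beyond $f_{P_c}(0)=1/9$: in $P_c$, any element below $\gamma=\Phi(c)$ in $<_1$ is \emph{above} $\gamma$ in both $<_2$ and $<_3$ (this is precisely what the ``reversed bottom third'' buys). Hence any free colour $\gamma'\in L(v_1)\setminus\{c\}$, which must satisfy $\Phi(\gamma')<_1\Phi(c)$, cannot lie in $L(v_2)\cup L(v_3)$, since then $v_2$ or $v_3$ would have preferred it to~$c$. This forces $L(v_1)\cap L(v_i)=\{c\}$ for $i=2,3$, so the only remaining overlap is $j=|L(v_2)\cap L(v_3)|$. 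The paper then bounds $\Pr(M_e)$ as a product over the four disjoint colour sets $L(v_1)\setminus\{c\}$, $(L(v_2)\cap L(v_3))\setminus\{c\}$, $L(v_2)\setminus L(v_3)$, $L(v_3)\setminus L(v_2)$, shows this bound is monotone in~$j$, and obtains $\Pr(M_e)\le((1-2q)/9)^{\ell-1}$. Choosing $q$ via $q^2=(1-2q)/9$ (so $q\approx 0.24$, not the $0.3028$ from the cruder argument you cite) makes this $q^{2\ell-2}$, and then $d^2q^{2\ell-2}<1/2$ together with $3dq^\ell<1/2$ yields the stated~$\ell$.
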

\begin{proof}
  Let $\ell=\lceil 0.78\log_3d\rceil+2$ and assume each vertex $v$ has a
  list $L(v)$ of $\ell$ colours to choose from. Let $m$ be the size of the
  palette $\bigcup_{v\in V(G)}L(v)$; by increasing $m$ if need be, we can
  assume $m$ is divisible by~$3$. Take a random map $\Phi:\bigcup_{v\in
    V(G)}L(v) \to[m]$, and let $P_c$ be the $(3,m)$-preference order given
  as an example in~\S\ref{sec:preford}.

  Let $q^2=(1-2q)/9$, so $q=(-1+\sqrt{10})/9\approx0.24$. As described
  in~\S\ref{sec:preford}, each colour in the palette is forbidden on one of
  $V_1$, $V_2$ or $V_3$, with probability $q$ each, and is otherwise free,
  with probability $1-3q$. If $v\in V_i$ then $c(v)$ is taken to be a
  non-free colour, if $L(v)$ has one available, else it is the free colour
  whose image under $\Phi$ is most preferred in the ordering~$<_i$.

  There are two ways the colouring can fail: a vertex might have no colour
  available, or an edge might be monochromatic. The expected number of
  vertices with no colours available, that is, all colours in $L(v)$ are
  forbidden on $V_i$, is $3dq^\ell<1/2$. Suppose now some edge
  $e=\{v_i,v_2,v_3\}$ is monochromatic, where $v_i\in V_i$, $1\le i\le 3$:
  say $c(v_1)=c(v_2)=c(v_3)=\gamma$. Then $\gamma$ must be free. Observe
  that any other colour lying in more than one of $L(v_1)$, $L(v_2)$ and
  $L(v_3)$ is free, else it would have been chosen by one of the
  vertices. Let $\Phi(\gamma)=k$ and let $p_i={\rm rpos}_{<_i}(k)$, $1\le
  i\le3$; for ease of notation assume $k\le m/3$ so $p_1\le p_2\le p_3$. If
  $\gamma'\in L(v_1)$ is free and $\gamma'\ne\gamma$ then
  $\Phi(\gamma')<_1\Phi(\gamma)$. By the definition of $P_c$ this means
  $\Phi(\gamma)<_2\Phi(\gamma')$ and $\Phi(\gamma)<_3\Phi(\gamma')$, so
  $\gamma'\notin L(v_2)\cup L(v_3)$. Thus $L(v_1)\cap L(v_i)=\{\gamma\}$
  for $i=2,3$; let $j=|L(v_2)\cap L(v_3)|$.
  
  Consider the event $M_e$ that $e$ is monochromatic (necessarily of
  colour~$\gamma$, given what we now know of $L(v_i)$). Let $p_i'=p_i-1/m$,
  $i=1,2$. The probability that $c(v_1)=\gamma$ is at most
  $(q+(1-3q)p_1')^{\ell-1}$, because every colour in
  $L(v_1)\setminus\{\gamma\}$ must either be forbidden on~$V_1$ or must map
  under $\Phi$ to a relative position below~$p_1$. Treating in like manner
  $(L(v_2)\cap L(v_3))\setminus\{\gamma\}$, $L(v_2)\setminus L(v_3)$ and
  $L(v_3)\setminus L(v_2)$, we have
  $$
  \Pr(M_e)\le
  (q+(1-3q)p'_1)^{\ell-1}((1-3q)p'_2)^{j-1}(q+(1-3q)p'_2)^{\ell-j}
  (q+(1-3q))^{\ell-j}\,.
  $$
  Since $p_2'\le 2/3$ and $q\approx1/4$ we have $q>(1-3q)p_2'$, so
  $(1-3q)p_2'\le (q+(1-3q)p_2')/2<(q+(1-3q)p_2')(1-2q)$. Hence the bound for 
  $ \Pr(M_e)$ decreases with~$j$, and so 
  $$
  \Pr(M_e)\le[(q+(1-3q)p_1')(q+(1-3q)p'_2))(1-2q)]^{\ell-1}\,.
  $$
  But $p_1'=1/3-x-1/m\le 1/3-x$ and $p_2'=1/3+x$ for some $x\ge0$, so
  $(q+(1-3q)p_1')(q+(1-3q)p'_2)\le 1/9$. Thus $\Pr(M_e)\le
  ((1-2q)/9)^{\ell-1} = q^{2\ell-2}$.

  Finally, there are $d^2$ edges in $G$, so the expected number of monochromatic
  edges is at most $d^2q^{2\ell-2}<1/2$. Hence there is some mapping $\Phi$
  for which every vertex has a choice of colour and for which no edge is
  monochromatic, proving the theorem.
\end{proof}

As discussed in~\S\ref{sec:preford}, the algorithm used in
Theorem~\ref{thm:basic} is too weak for general use, and we turn now to the
main algorithm. It too uses randomized preference orders.

\medskip
\paragraph*{{\bf Algorithm} for list colouring an $r$-partite $r$-graph~$G$
having lists of size $\ell$}
\begin{itemize}
\item Let $[t]$ be the palette. Choose parameters $k$ and $\delta$. Let
  $m=\delta \ell/k$.
\item Randomly partition the palette into $m$ blocks $B_1,\ldots,B_m$ of
  equal size (increase $t$ if need be). Choose an $(r,m)$-preference order
  $P=(<_1,\ldots,<_r)$.
\item Let $\mathcal{B}=\{B_1,\ldots,B_m\}$. Say $B\in\mathcal{B}$ is {\em
    available} to $v\in V(G)$ if $|L(v)\cap B|>k$. 
\item Define $b:V(G)\to\mathcal{B}$ by $b(v)=B_q$ where, if $v\in V_i$,
  then $q$ is the member of $\{j:B_j \mbox{ is available to $v$}\}$ of
  greatest relative position in the order~$<_i$.
\item For $B\in\mathcal{B}$ let $X(B)=\{v:b(v)=B\}$. Colour $G[X(B)]$ using
  colours from~$B$.
\end{itemize}

We shall choose $\delta<1$ small. Since, for $v\in V(G)$, at most
$mk=\delta \ell$ colours are in blocks unavailable to $v$, there are at
least $(1-\delta)\ell$ colours in $L(v)$ in available blocks: in particular
$b(v)$ is well-defined. In effect, $v$ is promising to choose a colour
$c(v)$ from the block~$b(v)$, this block being the most preferred amongst
blocks available to~$v$ (where $v\in V_i$ uses the order~$<_i$). The
algorithm will succeed --- that is, it will show $G$ is $L$-chooseable, if
for each $B\in\mathcal{B}$ we can colour $G[X(B)]$ using colours from~$B$,
because the sets $X(B)$ partition $V(G)$ and the sets $B$ partition $[t]$.

Since $|L(v)\cap B|>k$ for each $v\in X(B)$, the algorithm will succeed if
the subgraph $G[X(B)]$ is $k$-degenerate, as verified by applying the next
(standard and elementary) lemma to $H=G[X(B)]$.

\begin{lem}\label{lem:degen}
Let $H$ be a $k$-degenerate $r$-graph. Let
$L:V(H)\to\mathcal{P}(\mathbb{N})$ be a list assignment with
$|L(v)|>k$ for every vertex~$v$. Then $H$ is $L$-chooseable.
\end{lem}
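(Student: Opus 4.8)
The plan is to prove this by the standard greedy-colouring argument along a degeneracy ordering. First I would use the definition of $k$-degeneracy to produce an ordering $v_1, v_2, \ldots, v_N$ of $V(H)$ such that, for each $j$, the vertex $v_j$ lies in at most $k$ edges of $H$ all of whose vertices belong to $\{v_1, \ldots, v_j\}$. This ordering is built from the top down: repeatedly pick a vertex of degree at most $k$ in the remaining induced subgraph (which exists, by $k$-degeneracy applied to the vertex set of that subgraph), delete it and record it; reversing the order of deletion gives $v_1, \ldots, v_N$, and the vertex deleted at the step when $\{v_1,\ldots,v_j\}$ remained is exactly $v_j$, which therefore had degree at most $k$ in $H[\{v_1,\ldots,v_j\}]$.

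Next I would colour the vertices greedily in the order $v_1, v_2, \ldots, v_N$, maintaining the invariant that after $v_1, \ldots, v_{j-1}$ have been coloured there is no monochromatic edge of $H[\{v_1, \ldots, v_{j-1}\}]$. To colour $v_j$, consider the edges $e$ with $v_j \in e \subseteq \{v_1, \ldots, v_j\}$; by construction there are at most $k$ of them, and the vertices of $e \setminus \{v_j\}$ are already coloured. Such an edge can only become monochromatic if the $r-1$ already-coloured vertices of $e$ all carry the same colour $\gamma_e$, in which case it suffices to avoid $c(v_j) = \gamma_e$; if they do not, then $e$ is harmless whatever colour $v_j$ receives. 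Hence at most $k$ colours (one per relevant edge) are forbidden to $v_j$, and since $|L(v_j)| > k$ there is an admissible colour $c(v_j) \in L(v_j)$; choosing it preserves the invariant.

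Carrying this through to $j = N$ yields an $L$-colouring of $H$ with no monochromatic edge, so $H$ is $L$-chooseable. There is no genuine obstacle here; the only points to keep in mind are that, for $r \ge 3$, a single edge through $v_j$ forbids at most one colour (not $r-1$), so the count of forbidden colours is governed by the number of relevant edges rather than by the full degree, and that if $k$ is not an integer then ``degree at most $k$'' means at most $\lfloor k \rfloor$ while $|L(v_j)| > k$ forces $|L(v_j)| \ge \lfloor k \rfloor + 1$, so the count still works out.
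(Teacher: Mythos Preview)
Your proof is correct and follows essentially the same approach as the paper: build a degeneracy ordering and then colour greedily, noting that at step $j$ at most $k$ edges through $v_j$ lie entirely in $\{v_1,\ldots,v_j\}$, so at most $k$ colours are forbidden. The paper's phrasing differs only cosmetically (it picks one previously coloured vertex from each such edge and avoids that vertex's colour, rather than checking whether the edge is already non-monochromatic), but the argument is the same.
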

\begin{proof} 
  Construct an ordering $v_1,\ldots,v_n$ of the vertices of $H$ in which
  $v_j$ has minimum degree in the subgraph $H[\{v_1,\ldots,v_j\}]$, $1\le
  j\le n$. Now, for $j=1,\ldots,n$ in turn, choose a colour $c(v_j)\in
  L(v_j)$ as follows. There are at most $k$ edges in
  $H[\{v_1,\ldots,v_j\}]$ that contain~$v_j$: select a vertex other than
  $v_j$ in each of these edges, and then choose $c(v_j)\in L(v_j)$
  different from the colours of the selected vertices (possible since
  $|L(v_j)|>k$). The resultant colouring is a proper colouring of~$H$.
\end{proof}

We give two examples of the use of the algorithm. In each case, proving
that the algorithm succeeds amounts to showing that $G[X(B)]$ is
$k$-degenerate, for each block $B\in\mathcal{B}$. The first example
supplies the upper bound for Theorem~\ref{thm:mainthm}, the second example
establishes Theorem~\ref{thm:ranlists}.

Broadly speaking, the first example works for the following reason. There
is an $r$-tuple $x=(x_1,\ldots,x_r)$ of relative positions of the block $B$
in the preference order~$P$. If $x_i$ is small then the number $|X_i|$ of
vertices in $V_i$ for which $b(v)=B$ will, very likely, be correspondingly
small. If $x_j<\theta$ for some~$j$ (where $\theta$ is determined
by~$\alpha$), it turns out that $X_j=\emptyset$, so certainly $G[X]$ is
$k$-degenerate. On the other hand, if $x_j\ge\theta$ for all~$j$, then by
definition of $f_P(\theta)$ we know $\prod_{i\ne i_x}x_i\le
f_P(\theta)$. This leads to a bound on $\prod_{i\ne i_X}|X_i|$ which,
because of property $D(r,n,d)$, again means $G[X]$ is $k$-degenerate. The
second example works in a similar way but finishes differently; because
$\theta=0$ (as $G$ is simple) then $\prod_{i\ne i_X}|X_i|$ must be bounded,
and since $X$ is a random set (as the lists were 
chosen randomly) we again conclude that $G[X]$ is $k$-degenerate.

We give a quantitative bound in the first example, with a rate at which the
$o(1)$ term tends to zero as $d\to\infty$. This bound depends on two
factors, one being the value of $k$ for which the sets in property
$D(r,n,d)$ are $k$-degenerate, and the other being the rate at which
$f(r,\theta,m)\to f(r,\theta)$ as $m\to\infty$. It turns out to be the
second of these that predominates in our analysis; we use a bound on the
rate proved in~\S\ref{sec:pref}.

\begin{thm}\label{thm:ub}
  Let $r\ge2$. Then there exists $d_1=d_1(r)$ such that, if $d>d_1$ and $G$
  is an $r$-uniform $r$-partite hypergraph with property $D(r,n,d)$, then
  $$
  \chi_l(G)\le (g(r,\alpha)+ (\log\log d)^{-1/5})\log_r d\,
  $$
  where $\alpha=\log_n d$.
\end{thm}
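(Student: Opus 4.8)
The plan is to run the algorithm described above with carefully chosen parameters, and show that every $G[X(B)]$ is $k$-degenerate, where $k=4(\log d/\log\log d)$ is the degeneracy constant appearing in property $D(r,n,d)$. Set $\theta=\beta(\alpha)$ (the value from Definition~\ref{defn:g}), and choose the preference order $P=(<_1,\ldots,<_r)$ to be an $(r,m)$-preference order nearly attaining $f(r,\theta,m)$, with $m$ large but slowly growing in $d$ (so that $f(r,\theta,m)$ is within, say, $(\log\log d)^{-1/4}$ of $f(r,\theta)$ --- this is where the rate bound from \S\ref{sec:pref} is invoked). Take $\ell=(g(r,\alpha)+(\log\log d)^{-1/5})\log_r d$, and set $k$ as above, $\delta$ a small constant, and $m=\delta\ell/k$. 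The target is to show that with positive probability over the random partition of the palette into blocks, every block $B$ yields a $k$-degenerate $G[X(B)]$, hence $G$ is $L$-chooseable by Lemma~\ref{lem:degen}.

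First I would fix a block $B$ and let $x=(x_1,\ldots,x_r)$ be its $r$-tuple of relative positions in $P$. For $v\in V_i$, the event $b(v)=B$ requires that $B$ is available to $v$ (i.e.\ $|L(v)\cap B|>k$) and that every block of higher $<_i$-relative-position than $B$ is \emph{not} available to $v$. Since each colour of $L(v)$ lands in a uniformly random block, the number of blocks above $B$ in $<_i$ that are available to $v$ is stochastically controlled, and a standard first-moment/concentration estimate shows that $\Pr(b(v)=B)$ is, up to lower-order factors, at most $x_i^{(1-\delta)\ell/|B|}$-ish --- more precisely, one shows $\mathbb{E}|X_i|\le n\cdot x_i^{\,c\ell}$ for an appropriate constant $c$, and that $|X_i|$ is concentrated (or at any rate, that $\Pr(|X_i|>2\mathbb{E}|X_i|+s)$ is small) so that almost surely $|X_i|\le n x_i^{c\ell}\mathrm{polylog}$. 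The key numerical input is~(\ref{eqn:fng}): because $\beta^{g(r,\alpha)\log_r d}=1/n$ and $\ell\approx g(r,\alpha)\log_r d$, we get $n\theta^{c\ell}<1$, so when $x_j<\theta$ for some $j$ the expected size of $X_j$ is $o(1)$, forcing $X_j=\emptyset$ almost surely, whence $G[X(B)]=\emptyset$ trivially. When instead $x_j\ge\theta$ for all $j$, the defining property $x\in[\theta,1]^r$ gives $\prod_{i\ne i_x}x_i\le f_P(\theta)\le f(r,\theta)+(\log\log d)^{-1/4}$, and combining with the bounds on the $|X_i|$ yields $\prod_{i\ne i_X}|X_i|\le n^{r-1}\bigl(f(r,\theta)+o(1)\bigr)^{c\ell}\cdot\mathrm{polylog}$; using $f(r,\beta)^{g(r,\alpha)\log_r d}=1/d$ from~(\ref{eqn:fng}), together with the slack $(\log\log d)^{-1/5}$ built into $\ell$, this product is at most $n^{r-1}/d$ (the polylog and $o(1)$ slack being absorbed by the extra factor $r^{-(\log\log d)^{-1/5}\log_r d}=d^{-(\log\log d)^{-1/5}}$ in the bound). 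Property $D(r,n,d)$ then tells us $X(B)$ is $4(\log d/\log\log d)$-degenerate, i.e.\ $k$-degenerate.

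I would then take a union bound over the $m$ blocks (just polylogarithmically many) to conclude that, with probability $1-o(1)$, every $G[X(B)]$ is $k$-degenerate simultaneously, so such a partition exists; applying Lemma~\ref{lem:degen} to each $H=G[X(B)]$ (legitimate since $|L(v)\cap B|>k$ for $v\in X(B)$) colours $G$ from the palette, since the $X(B)$ partition $V(G)$ and the $B$ partition $[t]$. This proves $\chi_l(G)\le\ell=(g(r,\alpha)+(\log\log d)^{-1/5})\log_r d$.

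The main obstacle will be the concentration/upper-tail control of $|X_i|$: the events $b(v)=B$ for different $v\in V_i$ are not independent (they are determined by the same random block partition), so a naive second-moment argument is delicate. I expect to handle this by conditioning on the \emph{availability structure} --- which blocks are available to which vertices, a collection of weakly dependent events --- and noting that once availability is fixed, $b(v)$ is a deterministic function of the $<_i$-ordering, so $|X_i|$ is a sum over $v$ of indicator functions each bounded in probability; a Chernoff- or Azuma-type bound (or just Markov applied to a high moment, which suffices since we only need $|X_i|\le n x_i^{c\ell}\cdot\mathrm{polylog}$ and can afford polynomial-in-$d$ losses that the exponential term kills) then gives what we need. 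The second delicate point is bookkeeping the three error sources --- the $\delta$ loss in effective list size, the $f(r,\theta,m)\to f(r,\theta)$ rate, and the $k$-degeneracy exponent --- against the single slack term $(\log\log d)^{-1/5}$; the claim in the text is that the $m\to\infty$ rate dominates, so choosing $m$ growing like a small power of $\log\log d$ and $\delta$ like $(\log\log d)^{-1/5}$ should make everything fit, but the constants require care.
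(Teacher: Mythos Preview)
Your approach is essentially that of the paper: the same algorithm, the same two cases ($x_j<\beta$ for some~$j$, versus $x\in[\beta,1]^r$), and the same appeal to~(\ref{eqn:fng}) and property $D(r,n,d)$. Two places where you overcomplicate or slightly misstep:

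(i) The ``main obstacle'' you anticipate --- concentration for $|X_i|$ --- is not an obstacle at all. The paper bounds $\Pr(b(v)=B)\le (e/\delta)^{\delta\ell}x_i^{(1-\delta)\ell}$ by a direct count (if $b(v)=B$ then at least $(1-\delta)\ell$ colours of $L(v)$ lie in blocks of relative position $\le x_i$ in~$<_i$, and this event has the stated probability under the random block partition), giving $\mathbb{E}|X_i|\le n(e/\delta)^{\delta\ell}x_i^{(1-\delta)\ell}$, and then applies \emph{plain first-moment Markov}: $\Pr(|X_i|>rm\cdot\mathbb{E}|X_i|)<1/rm$. No independence is needed, no Azuma, no higher moments. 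A union bound over the $rm$ pairs $(B,i)$ leaves positive probability that all the inequalities $|X_i|\le rm(e/\delta)^{\delta\ell}x_i^{(1-\delta)\ell}n$ hold simultaneously, and that is all one needs for existence of a good partition.

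(ii) $\delta$ is not a constant. The paper takes $\delta=(\log\log d)^{-1/4}$, so that the error factor $(e/\delta)^{r\delta\ell}$ (and the companion $e^{\delta\ell}$ coming from $f_P(\beta)\le f(r,\beta)e^\delta$ via Lemma~\ref{lem:fdecr}) is of order $e^{O(\delta\ell\log(1/\delta))}$, which is beaten by the slack $f(r,\beta)^{\Lambda}$ with $\Lambda=(\log\log d)^{-1/5}\log_r d$. With a fixed $\delta$ you would lose a constant factor in the exponent that the $(\log\log d)^{-1/5}$ slack cannot absorb. Your closing remark that $\delta$ should be ``like $(\log\log d)^{-1/5}$'' is on the right track; the paper's choice $(\log\log d)^{-1/4}$ is what makes the bookkeeping close.
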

\begin{proof}
  All estimates in the proof hold provided $d_1(r)$ is large enough: we
  ignore integer parts. Let lists of $\ell$ colours be assigned to each
  vertex of $G$, where $\ell=(g(r,\alpha)+(\log\log d)^{-1/5})\log_r d$.
  Let $[t]$ be the palette comprising all the colours in all the lists;
  clearly $t\ge \ell$. Define $k=4\log d/ \log\log d$ and $\delta=(\log\log
  d)^{-1/4}$. Further define $m=\delta\ell/k$. By adding a few dummy
  colours to the palette if necessary, we may assume that $t$ is divisible
  by $m$. 

  Let $\beta=\beta(\alpha)$ as specified in Definition~\ref{defn:g}. 
  There is some $(r,m)$-preference order
  $P=(<_1,\ldots,<_r)$ with $f_P(\beta)=f(r,\beta,m)$. Apply the algorithm
  above to~$G$, using $k$, $\delta$ and $P$ as just specified. What remains
  is to show that $G[X(B)]$ is $k$-degenerate for each $B\in\mathcal{B}$.

  Here is the central part of the argument. Consider some particular block
  $B$, and let $X=X(B)$. Let $x=(x_1,\ldots,x_r)$ be the $r$-tuple of
  relative positions of $B$ in the preference order~$P$: that is, if, say,
  $B=B_j$, then $x_i$ is the relative position of $j$ in~$<_i$. Let $v\in
  X_i$. We know that at least $(1-\delta)\ell$ of the colours in $v$'s list
  lie in available blocks, and, by definition of~$X$, these blocks all lie
  in relative positions $x_i$ or below in the $i$'th order. There are
  $x_it$ colours from $[t]$ in blocks $B$ or below it in the $i$th order,
  so the probability that the random partition of $[t]$ into blocks results
  in $(1-\delta)\ell$ of $v$'s colours being placed in these low blocks is
  at most
  $$
  {\ell\choose (1-\delta)\ell}{x_i t\choose (1-\delta)\ell}{t\choose
    (1-\delta)\ell}^{-1}\le {\ell\choose \delta \ell} x_i^{(1-\delta)\ell}
  \le \left({e\over\delta}\right)^{\delta\ell}x_i^{(1-\delta)\ell}\,.
  $$
  Hence, by Markov's inequality, the inequality $|X_i|\le
  rm(e/\delta)^{\delta\ell}x_i^{(1-\delta)\ell}n$ holds with probability
  exceeding $1-1/rm$, and thus, with probability more than $1-1/m$, the
  inequality holds for $1\le i\le r$. Consequently, with positive
  probability, there exists a partition of $[t]$ such that the inequality
  holds for every block~$B$, and for every set $X_i=X(B)\cap V_i$, $1\le
  i\le r$. 

  To finish the proof, it is now enough to check that if
  $x=(x_1,\ldots,x_r)\in P$, and $X\subset V(G)$ satisfies $|X_i|\le
  rm(e/\delta)^{\delta\ell}x_i^{(1-\delta)\ell}n$, then $X$ is
  $k$-degenerate. There are two possibilities: either $\prod_{i\ne
    i_x}x_i\le f_P(\beta)$, or $\prod_{i\ne i_x}x_i> f_P(\beta)$.

  Consider the first possibility, that $\prod_{i\ne i_x}x_i\le
  f_P(\beta)$. Then
  $$
  \prod_{i\ne i_X}|X_i|\le (rm)^r \left({e\over\delta}\right)^{r\delta\ell}
  \left(\prod_{i\ne i_x} x_i\right)^{(1-\delta)\ell}n^{r-1} \le (rm)^r
  \left({e\over\delta}\right)^{r\delta\ell}
  f_P(\beta)^{(1-\delta)\ell}n^{r-1}\,.
  $$
  Now $f_P(\beta)=f(r,\beta,m)\le f(r,\beta)+2^r\sqrt{(\log rm)/m}$ by
  Lemma~\ref{lem:fdecr}. Theorem~\ref{thm:elem} tells us that $f(r,\beta)$
  and $g(r,\alpha)$ are bounded below (namely $f(r,\beta)\ge
  f(r,1/r)=(1/r)^{r-1}$ and $g(r,\alpha)\ge g(r,1/r)=1/(r-1)$) so,
  recalling the definitions of $k$, $\delta$ and~$m$, we have
  $f_P(\beta)\le f(r,\beta)(1+\delta)\le f(r,\beta)e^\delta$. Put
  $\Lambda=(\log_r d)(\log\log d)^{-1/5}$, so $\ell=g(r,\alpha)\log_r d +
  \Lambda$. Then, using~(\ref{eqn:fng}), we obtain
  $$
  \prod_{i\ne i_X}|X_i|\le (rm)^r \left({e\over\delta}\right)^{r\delta\ell}
  f(r,\beta)^{(1-\delta)\ell}e^{\delta\ell}n^{r-1}
  = (rm)^r \left({e\over\delta}\right)^{r\delta\ell}
  f(r,\beta)^{\Lambda-\delta\ell}e^{\delta\ell}{n^{r-1}\over d}\,.
  $$
  By Theorem~\ref{thm:elem}, $f(r,\beta)\le f(2,0)=1/2$, so we conclude
  that $\prod_{i\ne i_X}|X_i|\le Kn^{r-1}/d$, where $K=(rm)^r
  (e/\delta)^{r\delta\ell}e^{\delta\ell}2^{-\Lambda+\delta \ell}$. Since $\Lambda$ is
  much larger than either $\delta\ell\log(1/\delta)$ or $\log m$, we see
  that $K<1$. Hence $\prod_{i\ne i_X}|X_i|< n^{r-1}/d$ and,
  because $G$ has property $D(r,n,d)$, this means $X=X(B)$ is $k$-degenerate,
  so resolving the first of the two possibilites.

  Consider now the second possibility, where $\prod_{i\ne
    i_x}x_i>f_P(\beta)$. By definition of $f_P(\beta)$ there must be some
  index~$j$ with $x_j < \beta$. Therefore, using equation~(\ref{eqn:fng}),
  and the fact that (by definition) $\beta(\alpha)\in [0,1/r]$, we have
  $$
  |X_j|< rm(e/\delta)^{\delta\ell}\beta^{(1-\delta)\ell}n =
  rm(e/\delta)^{\delta\ell}\beta^{\Lambda- \delta
    \ell}\le rm(e/\delta)^{\delta\ell}r^{-\Lambda+ \delta \ell}<K\,,
  $$
  where $\Lambda$ and $K$ are as in the previous paragraph. But we saw that
  $K<1$, and so $|X_j|<1$, meaning $X_j=\emptyset$. But then $X$ contains
  no edges, and so is certainly $k$-degenerate. This resolves the second
  of the two possibilities, completing the proof of the theorem.
\end{proof}

Our second example of the use of the algorithm is a proof of
Theorem~\ref{thm:ranlists}.

\begin{proof}[Proof of Theorem~\ref{thm:ranlists}.]
  Let $G$ and the lists $L(v)$ be as stated. We choose constants $k$ and
  $\delta$ as follows. First, write $\ell=g(r,0)\log_r d + \Lambda$, so
  $\Lambda\approx \epsilon g(r,0)\log_r d$. Then choose $\delta<1$ small
  enough that $(r\ell)^r (e/\delta)^{r\delta\ell}e^{\delta\ell}2^{-\Lambda+\delta
    \ell}<2^{-\Lambda/2}$ (assuming, as we may, that $\ell$ is large). Then
  choose $k$ so that $2^{-k\Lambda/2}< 1/d^{M+1}$.

  As usual, put $m=\delta\ell/k$ and assume $t$ is a multiple of~$m$. Let
  $\alpha=\log_n d$. Since $G$ is simple, $d\le n$; thus $\alpha\le 1$ and
  (by Theorem~\ref{thm:elemg}) $g(r,\alpha)=g(r,0)$. Select an
  $(r,m)$-preference order $P=(<_1,\ldots,<_r)$ with
  $f_P(0)=f(r,0,m)$. Apply the algorithm with $k$, $\delta$ and $P$ as
  specified: we need only show that $G[X(B)]$ is $k$-degenerate for each
  block of colours $B\in\mathcal{B}$. Fix some block $B=B_j$ and, as in the
  proof of Theorem~\ref{thm:ub}, let $x=(x_1,\ldots,x_r)\in P$ be the tuple
  of relative positions of~$j$ in the orders $<_1,\ldots,<_r$.

  The vertex lists are chosen randomly. We can imagine the algorithm first
  makes the random partition of the palette, and afterwards the assignment
  of lists is made to the vertices. The first step determines the
  collections $\mathcal{L}_i$, $1\le i\le r$, of vertex lists such that, if
  $v\in V_i$ and $L(v)\in\mathcal{L}_i$, then $b(v)=B$. The second step
  determines which vertices $v\in V_i$ receive a list from~$\mathcal{L}_i$,
  namely, it determines~$X_i$. Hence we can consider $X_i$ to have been
  generated in the following way: first, its size $|X_i|$ is chosen from a
  binomial distribution with parameters $n,|\mathcal{L}_i|/{t \choose
    \ell}$, and then, having decided the size $|X_i|$, $X_i$ itself is a
  random $|X_i|$-subset of~$V_i$. In fact, having partitioned the palette,
  we may choose the sizes $|X_i|$ for every $B\in\mathcal{B}$ and
  every~$i$, $1\le i\le r$, before choosing the sets $X_i$ themselves. In
  the proof of Theorem~\ref{thm:ub}, we showed if $v\in V_i$ and $v$ has
  some list $L(v)$ then the probability that $b(v)=B$ is at most
  $(e/\delta)^{\delta\ell}x_i^{(1-\delta)\ell}$. But this probability is
  the probability that $L(v)\in\mathcal{L}_i$, and this equals
  $|\mathcal{L}_i|/{t \choose \ell}$; hence $|\mathcal{L}_i|/{t \choose
    \ell}\le (e/\delta)^{\delta\ell}x_i^{(1-\delta)\ell}$. Using Markov's
  inequality again as in the proof of Theorem~\ref{thm:ub}, we may assume
  that all the chosen sizes $|X_i|$ satisfy $|X_i|\le
  rm(e/\delta)^{\delta\ell}x_i^{(1-\delta)\ell}n$.

  We now re-use a calculation performed in the first possibility in the
  proof of Theorem~\ref{thm:ub}, though much less care is needed with the
  estimates this time. Taking $\beta=0$, and noting that $m=\Theta(\log
  d)$, we have once again $f_P(0)\le f(r,0)e^\delta$, and so $\prod_{i\ne
    i_X} (|X_i|/n)\le K/d$, where $K=(rm)^r
  (e/\delta)^{r\delta\ell}e^{\delta\ell}2^{-\Lambda+\delta \ell}$. Since
  $m<\ell$, we have $K<2^{-\Lambda/2}$ by choice of~$\delta$.

  Let $v\in V_{i_X}$ and let $E$ be one of the $d\choose k$ choices of a
  set of $k$ edges containing~$v$. Given that $G$ is simple, the
  probability, conditional on $v\in X$, that the edges in $E$ lie within
  $X$ is $\prod_{i\ne i_X}{n-k\choose |X_i|-k}/{n\choose |X_i|}\le
  \prod_{i\ne i_X} (|X_i|/n)^k\le(K/d)^k$. Thus the probability that the
  degree of $v$ in $G[X]$ exceeds $k$ is at most ${d\choose k}(K/d)^k\le
  K^k<2^{-k\Lambda/2}<1/d^{M+1}$, by choice of~$k$. This probability is
  less than~$1/nd$, so with probability exceeding $1-1/d$, every vertex in
  $X_{i_X}$ has degree at most $k$ in $G[X]$; because $G$ is $r$-partite this
  certainly implies $G[X]$ is $k$-degenerate.

  So, given~$B\in\mathcal{B}$, $G[X(B)]$ is $k$-degenerate with probability
  more than $1-1/d$, and since $|\mathcal{B}|=m=o(d)$ this means that, with
  probability tending to one, $G[X(B)]$ is $k$-degenerate for every
  $B\in\mathcal{B}$ and thus $G$ is $L$-colourable, proving the theorem.
\end{proof}

\section{A lower bound}\label{sec:lowerbound}

To prove the lower bound in Theorem~\ref{thm:mainthm} we shall choose some
lists for $G$ at random. We make use of the following basic tail estimate.

\begin{prop}[{\cite[Theorem~2.1, Theorem~2.8]{JLR}}]\label{prop:chernoff}
  If $Y$ is binomially distributed, with mean $\lambda$,
  then $\mathbb{P}(Y\le \lambda-y)\le e^{-y^2/2\lambda}$. The same bound
  holds for any sum $Y$ of independent Bernoulli variables.
\end{prop}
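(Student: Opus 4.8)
The plan is to reduce both assertions to the standard exponential-moment (Bernstein--Chernoff) argument. First I would fix the general setting: let $Y=\sum_{i=1}^N Z_i$ where the $Z_i$ are independent Bernoulli variables with $\mathbb P(Z_i=1)=p_i$, so that $\lambda=\mathbb EY=\sum_i p_i$; the binomial case is just $p_i=p$ for all $i$. The goal is the lower-tail bound $\mathbb P(Y\le\lambda-y)\le e^{-y^2/2\lambda}$ for $0\le y\le\lambda$ (for $y>\lambda$ the event is empty and the bound is trivial). The heart of the matter is a clean estimate of the moment generating function of $-Y$.

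The key steps, in order, are as follows. Fix $s>0$ and apply Markov's inequality to $e^{-sY}$ to get $\mathbb P(Y\le\lambda-y)=\mathbb P(e^{-sY}\ge e^{-s(\lambda-y)})\le e^{s(\lambda-y)}\,\mathbb Ee^{-sY}$. By independence, $\mathbb Ee^{-sY}=\prod_i\mathbb Ee^{-sZ_i}=\prod_i(1-p_i+p_ie^{-s})=\prod_i\bigl(1-p_i(1-e^{-s})\bigr)$. Using $1-x\le e^{-x}$ termwise gives $\mathbb Ee^{-sY}\le\exp\!\bigl(-\lambda(1-e^{-s})\bigr)$. Hence $\mathbb P(Y\le\lambda-y)\le\exp\!\bigl(s(\lambda-y)-\lambda(1-e^{-s})\bigr)=\exp\!\bigl(-sy+\lambda(s-1+e^{-s})\bigr)$. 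Now I would use the elementary inequality $e^{-s}\le 1-s+s^2/2$ for $s\ge0$, so that $s-1+e^{-s}\le s^2/2$, giving the bound $\exp(-sy+\lambda s^2/2)$. Optimising over $s$, the choice $s=y/\lambda$ (which is $\le 1$ since $y\le\lambda$, so all the inequalities used remain valid) yields $\mathbb P(Y\le\lambda-y)\le\exp(-y^2/2\lambda)$, as required. The binomial statement is the special case $p_i=p$, $N$ the number of trials, and the ``same bound for sums of independent Bernoulli variables'' is precisely the general computation just carried out. Since these bounds are quoted from \cite{JLR}, one may alternatively simply cite Theorems~2.1 and~2.8 there and omit the derivation entirely.

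There is no real obstacle here: the only point requiring a modicum of care is checking that the optimising value $s=y/\lambda$ lies in the range where $e^{-s}\le 1-s+s^2/2$ is being applied, which it does for all $s\ge0$, so in fact even $y>\lambda$ causes no problem beyond the event being empty. The statement is a standard Chernoff-type estimate and the proof is entirely routine; its role in the paper is purely as a cited tool for the random-list construction in the proof of the lower bound (Theorem~\ref{thm:lb}).
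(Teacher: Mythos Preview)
Your derivation is correct and entirely standard. The paper itself does not prove Proposition~\ref{prop:chernoff} at all: it is stated as a citation of Theorems~2.1 and~2.8 of~\cite{JLR} and used as a black box thereafter, exactly as you anticipate in your final sentence. So there is nothing to compare against beyond noting that the paper's ``proof'' is simply the reference, while you have supplied the routine Chernoff--Bernstein argument that underlies it.
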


\begin{rem}\label{rem:chernoff}
  The bound of Proposition~\ref{prop:chernoff} holds if $Y$ is
  hypergeometrically distributed. This can be proved either by a comparison
  of moment generating functions, on which the inequality is based
  (Hoeffding~\cite{Hoef}) or by showing that in this case $Y$ is in fact a
  sum of independent Bernoulli variables (Vatutin and Mikhailov~\cite{VM}
  --- the proof is reproduced in~\cite{HT} and the idea goes back at least to
  Harper~\cite{H}). More generally, the bound
  holds for variables of the form $Y=|X\cap T_1\cdots\cap T_r|$, where
  $X,T_1,\ldots,T_r\subset [n]$, $X$ is fixed and $T_1,\ldots,T_r$ are
  chosen independently and uniformly of fixed sizes $|T_i|=t_i$, $1\le i\le
  r$. When $r=1$ then $Y$ is hypergeometrically distributed: the
  general case can be derived from the generating function proof by
  induction on~$r$, but in fact it is already shown explicitly
  in~\cite[Corollary~5]{VM} that $Y$ of this form are sums of independent
  Bernoulli variables. The authors thank Svante Janson for pointing them
  to~\cite{VM}.
\end{rem}

The next straightforward lemma provides the properties that we need of the
lists. The size of the palette $[t]$ from which the lists are chosen is not
particularly significant.

\begin{lem}\label{lem:lists}
  Let $\ell,n\in\mathbb{N}$, $\ell\ge3$, and let $\zeta\in(0,1]$. Let
  $t=\lceil2\ell^2/\zeta\rceil$. Suppose that $n\zeta^\ell\ge 16t$. Then
  there exists a sequence $\mathcal{L}=(L_i)_{i\in[n]}$ of elements of
  $[t]^{(\ell)}$ such that, for every $Z\subset [t]$ with $|Z|=zt\ge \zeta
  t$, we have $|\{i\in[n]\,:\,L_i \subset Z\}|\ge nz^\ell/4$.
\end{lem}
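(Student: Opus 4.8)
The plan is to use the probabilistic method: choose the lists $L_1,\ldots,L_n$ independently and uniformly at random from $[t]^{(\ell)}$, and show that with positive probability the resulting sequence $\mathcal{L}$ has the stated covering property for every relevant $Z\subset[t]$. For a fixed $Z$ with $|Z|=zt$, the probability that a single random $\ell$-set is contained in $Z$ is $\binom{zt}{\ell}\big/\binom{t}{\ell}$, which is close to $z^\ell$; more precisely, since $t=\lceil 2\ell^2/\zeta\rceil$ is large compared with $\ell^2/\zeta$ and $z\ge\zeta$, one checks $\binom{zt}{\ell}\big/\binom{t}{\ell}\ge z^\ell(1-\ell^2/zt)\ge z^\ell/2$. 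Hence $N_Z:=|\{i:L_i\subseteq Z\}|$ is a sum of $n$ independent Bernoulli variables with mean at least $nz^\ell/2$. We want $N_Z\ge nz^\ell/4$, i.e. a deviation of at most half the mean below the mean, which by Proposition~\ref{prop:chernoff} fails with probability at most $\exp\!\big(-(nz^\ell/4)^2/(2\cdot nz^\ell/2)\big)=\exp(-nz^\ell/16)$.

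The main obstacle is the union bound: there are $\binom{t}{zt}$ sets $Z$ of a given size, and this binomial coefficient can be as large as roughly $2^t$, so a naive union over all $Z$ would need $nz^\ell/16$ to dominate $t\log 2$ for all $z\ge\zeta$ simultaneously. The smallest value of $nz^\ell$ occurs at $z=\zeta$, giving $n\zeta^\ell$, and the hypothesis $n\zeta^\ell\ge 16t$ is exactly what makes $\exp(-nz^\ell/16)\le e^{-t}$ for the worst case; for larger $z$ the bound is even better, but so is the number of sets $\binom{t}{zt}\le 2^t$ uniformly. So the plan is: for each size $s=zt$ with $\zeta t\le s\le t$, bound the number of bad $Z$ of that size by $2^t e^{-nz^\ell/16}$, and observe that since $z^\ell$ is increasing in $z$ this is at most $2^t e^{-n\zeta^\ell/16}\le 2^t e^{-t}<1$ once we also sum over the at most $t+1$ possible sizes (a factor $t+1$, absorbed by tightening the constant slightly, or by noting $2^t e^{-t}(t+1)<1$ for the relevant range of $t$, which holds since $t\ge 2\ell^2/\zeta\ge 2\cdot 9=18$ and in fact $n\zeta^\ell\ge 16t$ forces things to be comfortable).

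Putting this together: the expected number of pairs $(\text{size }s,\ Z\text{ of size }s)$ for which the covering property fails is strictly less than $1$, so with positive probability no such $Z$ exists, which yields the desired sequence $\mathcal{L}$. I would present the computation in the order: (i) estimate $\binom{zt}{\ell}/\binom{t}{\ell}\ge z^\ell/2$ using $t\ge 2\ell^2/\zeta$ and $z\ge\zeta$; (ii) apply the Chernoff bound of Proposition~\ref{prop:chernoff} to $N_Z$ to get failure probability $\le e^{-nz^\ell/16}$ for fixed $Z$; (iii) union bound over all $Z$ with $|Z|\ge\zeta t$, using $\sum_{s}\binom{t}{s}\le 2^{t}\cdot(\text{small factor})$ and the hypothesis $n\zeta^\ell\ge 16t$ to conclude the total is below $1$. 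Step (iii) — getting the union bound to close — is the only place any care is needed, and the hypothesis $n\zeta^\ell\ge 16t$ is precisely calibrated for it; step (i) is a routine comparison of binomial coefficients and step (ii) is a direct quotation of the stated tail estimate.
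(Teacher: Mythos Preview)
Your proposal is correct and follows essentially the same route as the paper: random lists, the estimate $\binom{zt}{\ell}/\binom{t}{\ell}\ge z^\ell/2$, Chernoff to get failure probability $\le e^{-nz^\ell/16}\le e^{-t}$, then a union bound. The only difference is cosmetic: the paper bounds the number of sets $Z$ by $2^t$ in one stroke rather than summing over sizes, so the extra factor of $t+1$ you worry about in step~(iii) never arises.
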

\begin{proof} 
  For each $i\in[n]$, choose $L_i$ uniformly at random in $[t]^{(\ell)}$,
  independently of other choices. Let $Z\subset [t]$ have size $zt\ge \zeta
  t$. Let $Y=\{i\in[n]\,:\,L_i \subset Z\}$. Then $Y$ is binomially
  distributed with parameters $n, p={|Z|\choose \ell}/{t\choose \ell}$. By
  Proposition~\ref{prop:chernoff}, $\Pr(Y\le np/2)\le \exp(-np/8)$. Now
  $np= n{zt\choose \ell}/{t\choose \ell}=nz^\ell\prod_{i=0}^{\ell-1}
  (t-i/z)/(t-i) \ge nz^\ell(1-\ell/(z(t-\ell)))^\ell\ge
  nz^\ell(1-1/(2\ell-1))^\ell\ge nz^\ell/2$, the penultimate inequality
  following from the fact that $t\ge 2\ell^2/z$ and the last because
  $\ell\ge3$. Thus $\Pr(|Y|\le nz^\ell/4)\le \Pr(Y\le np/2)\le
  \exp(-nz^\ell/16)\le \exp(-t)$. There are at most $2^t$ sets~$Z$, so with
  positive probability $|\{i\in[n]\,:\,L_i \subset Z\}|\ge nz^\ell/4$ holds
  for every $Z\subset[t]$, proving the lemma.
\end{proof}

The next theorem establishes the lower bound in
Theorem~\ref{thm:mainthm}. The argument is roughly this. We assign lists of
colours to the vertices using Lemma~\ref{lem:lists}. Suppose it is possible
to colour the graph. We obtain a preference order on the palette by
letting $<_i$ be the order of popularity of the colours on~$V_i$ in this
colouring. Thus there is some colour (green, say) whose relative positions
$x=(x_1,\ldots,x_r)$ satisfy $x_j>\theta$ for all~$j$ ($\theta$ determined
by~$\alpha$) and $\prod_{i\ne i_x}x_i\ge f(r,\theta)$. By the properties of
the lists this yields a lower bound on $\prod_{i\ne i_X}|X_i|$, where $X$
is the set of vertices choosing green. But this lower bound is incompatible
with $G$ having property $I(r,n,d)$ and the fact that $X$ is independent.

\begin{thm}\label{thm:lb}
  Let $r\ge2$. Then there exists $d_2=d_2(r)$ such that, if $d>d_2$ and $G$
  is an $r$-uniform $r$-partite hypergraph of order $rn$ having property
  $I(r,n,d)$, then
$$
\chi_l(G)>g(r,\alpha)\log_r d- 6r\log\log d\,.
$$
where $\alpha=\log_n d$.
\end{thm}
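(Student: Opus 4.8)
The plan is to assign random lists via Lemma~\ref{lem:lists} and show that no proper colouring can exist if $\ell$ is too small. Fix $\ell=\lceil g(r,\alpha)\log_r d-6r\log\log d\rceil$ and suppose, for contradiction, that $G$ is $L$-choosable for every list assignment with lists of this size. Let $\beta=\beta(\alpha)$ as in Definition~\ref{defn:g}, and set $\zeta=\beta$ (so that $\zeta^\ell$ is, up to lower-order factors, $1/n$ by~(\ref{eqn:fng})); one checks $n\zeta^\ell\ge 16t$ holds for $d$ large, so Lemma~\ref{lem:lists} applies and furnishes lists $L_i$ for the vertices of $V_1$ (and independently for each $V_j$) with the stated covering property: every $Z\subset[t]$ with $|Z|=zt\ge\zeta t$ contains at least $nz^\ell/4$ of the lists.

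Next I would extract a preference order from any hypothetical proper colouring. Given a proper $L$-colouring $c$, let $X(\gamma)=\{v:c(v)=\gamma\}$ and $X_i(\gamma)=X(\gamma)\cap V_i$; since $c$ is proper, each $X(\gamma)$ is independent. For each $i$, order the palette by decreasing popularity on $V_i$, i.e.\ let $<_i$ rank $\gamma$ by $|X_i(\gamma)|$ (breaking ties arbitrarily), and let $m=t$; this gives an $(r,t)$-preference order $P$. The key point is that for a colour $\gamma$ with relative position tuple $x=({\rm rpos}_{<_1}(\gamma),\ldots,{\rm rpos}_{<_r}(\gamma))$, the set $Z_i=\{\gamma':\gamma'\le_i\gamma\}$ has size $x_it$, and the vertices of $V_i$ whose list lies inside $Z_i$ are exactly those colours at least as popular as $\gamma$ on $V_i$ — wait, more carefully: if $L(v)\subset Z_i$ then $c(v)\in Z_i$, so $c(v)$ is ranked at position $x_i$ or higher on $V_i$; summing the popularities of those colours and using the covering bound, one gets that the colours ranked $\ge\gamma$ on $V_i$ collectively cover at least $nx_i^\ell/4$ vertices of $V_i$, hence by averaging there is a colour $\gamma$ with $|X_i(\gamma)|\ge$ (roughly) $n x_i^\ell/(4\,\cdot\,x_i t)=n x_i^{\ell-1}/(4t)$ for each coordinate simultaneously — this needs the standard argument that a popular colour exists in each truncated palette, executed so that one single colour works for all $i$ at once. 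The cleanest route is: apply $f(r,\beta)\le f_P(\beta)$ (Definition~\ref{defn:fg}), which guarantees some $x\in P$ with all $x_i\ge\beta$ and $\prod_{i\ne i_x}x_i\ge f(r,\beta)$; for this particular colour $\gamma$ the covering property of the lists gives $|X_i(\gamma)|\gtrsim n x_i^{\ell}/(4t)$, whence $\prod_{i\ne i_X}|X_i(\gamma)| \gtrsim (n/4t)^{r-1}\bigl(\prod_{i\ne i_x}x_i\bigr)^{\ell}\ge (n/4t)^{r-1}f(r,\beta)^{\ell}$.

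Finally I would derive the contradiction with property $I(r,n,d)$. Using~(\ref{eqn:fng}), namely $f(r,\beta)^{g(r,\alpha)\log_r d}=1/d$ and $\beta^{g(r,\alpha)\log_r d}=1/n$, together with $\ell\ge g(r,\alpha)\log_r d-6r\log\log d$, the quantity $f(r,\beta)^{\ell}$ is at least $(1/d)\cdot f(r,\beta)^{-6r\log\log d}$, which beats the $n^{r-1}/d\cdot\log^2 d$ threshold in~(\ref{eqn:I}) once the polylog and $t=\Theta(\ell^2/\beta)=\mathrm{poly}(\log d)$ factors are accounted for — here one uses that $f(r,\beta)\le 1/2$ (Theorem~\ref{thm:elem}) so $f(r,\beta)^{-6r\log\log d}\ge (\log d)^{6r\log 2}$ dominates $4^{r-1}t^{r-1}\log^2 d$. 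So $\prod_{i\ne i_X}|X_i(\gamma)|\ge n^{r-1}\log^2 d/d$, contradicting property $I(r,n,d)$ since $X(\gamma)$ is independent. Therefore no proper colouring exists and $\chi_l(G)>\ell$, as claimed.

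The main obstacle I expect is the bookkeeping in the middle step: one must pass from the covering guarantee "many lists lie in $Z_i$" to "some single colour is popular on every $V_i$ simultaneously" with the relative positions $x$ being the \emph{same} tuple across all coordinates. The trick is to not ask for a popular colour directly but to invoke $f(r,\beta)\le f_P(\beta)$ for the preference order $P$ built from the colouring, so that the tuple $x$ is handed to us with all the needed properties ($x_i\ge\beta$, product bounded below by $f(r,\beta)$), and only \emph{then} use the list-covering property coordinatewise to bound $|X_i(\gamma)|$ from below. One also has to be a little careful that $\beta\le 1/r\le 1$ so that $\zeta=\beta\in(0,1]$ is a legal input to Lemma~\ref{lem:lists}, and that the hypothesis $n\zeta^\ell\ge 16 t$ is met — this is where a mild lower bound on $n$ relative to $d$ is implicitly used (it follows from $d\le n^{r-1}$ and $\ell$ being logarithmic).
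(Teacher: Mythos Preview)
Your overall strategy matches the paper's: build the lists from Lemma~\ref{lem:lists}, assume a colouring, read off a preference order by popularity, invoke $f_P(\beta)\ge f(r,\beta)$ to locate a single colour $\gamma$ whose relative-position tuple $x$ has all $x_i\ge\beta$ and $\prod_{i\ne i_x}x_i\ge f(r,\beta)$, and then use the list-covering property coordinatewise to force $\prod_{i\ne i_X}|X_i(\gamma)|$ above the threshold in~(\ref{eqn:I}). The ``cleanest route'' you describe in your middle paragraph is exactly the paper's argument.

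The gap is your choice $\zeta=\beta$ and the claim ``$t=\Theta(\ell^2/\beta)=\mathrm{poly}(\log d)$''. This is false uniformly in~$\alpha$: as $\alpha\to 0^+$ (equivalently $n\gg d$, which is the regime of sparse simple $r$-graphs) one has $\beta(\alpha)\to 0$. Concretely, from $\beta^\alpha=f(r,\beta)\approx f(r,0)$ you get $\beta\approx f(r,0)^{1/\alpha}$, which for, say, $\alpha=1/\log d$ makes $\beta$ polynomially small in~$d$. Then $t=\lceil 2\ell^2/\beta\rceil$ is not polylog, and your final inequality fails: you need $f(r,\beta)^{-6r\log\log d}\ge (4t)^{r-1}\log^2 d$, but the left side is bounded by $r^{6r(r-1)\log\log d}=(\log d)^{O_r(1)}$ while the right side contains the factor $\beta^{-(r-1)}$. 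Your closing remark that the bound ``follows from $d\le n^{r-1}$'' is the wrong direction: that is an \emph{upper} bound on~$\alpha$, whereas a lower bound is what would be needed to control~$\beta$.

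The paper repairs this by taking $\zeta=\max\{\beta(\alpha),\,(1/r)^{r-1}\}$. This keeps $t$ polylog in~$d$, and the list-covering property is still applicable at the good colour because $\prod_{i\ne i_x}x_i\ge f(r,\beta)$ with each factor at most~$1$ forces every $x_i\ge f(r,\beta)\ge f(r,1/r)=(1/r)^{r-1}$. That extra observation --- that the product bound already pins each coordinate above an absolute constant --- is the missing ingredient.
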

\begin{proof}
  All estimates hold provided $d_2$ is large enough: we ignore integer
  parts. Let $G$ be a graph as in the theorem.  Let $\ell=g(r,\alpha)\log_r
  d - 6r\log\log d$. Let $\zeta=\max\{\beta(\alpha), (1/r)^{r-1}\}$. Recall
  that $\beta(\alpha)\in[0,1/r]$, and so $(1/r)^{r-1}\le\zeta\le 1/r$.
  Using~(\ref{eqn:fng}), we have
  $$
  n\zeta^{\ell+1}\ge n\beta(\alpha)^{g(r,\alpha)\log_r d}(1/r)^{-6r\log\log d+1} 
  = r^{6r\log\log d-1}\,.
  $$
  Thus $n\zeta^{\ell+1}\ge 2^{6r\log\log d-1}\ge 2^{2\log_2\log_2 d +
    6}=64(\log_2d)^2\ge64\ell^2$, since $g(r,\alpha)\le 1$. Hence
  $n\zeta^\ell\ge 16t$ where $t=\lceil2\ell^2/\zeta\rceil$. So we can
  apply Lemma~\ref{lem:lists} to obtain lists $L_1,\ldots,L_n$ of $\ell$
  colours each. Assign these lists to the vertices in $V_i$, for each~$i$,
  $1\le i\le r$.

  We claim that there is no vertex colouring compatible with these lists,
  and hence $\chi_l(G)>\ell$. Suppose, to the contrary, that there is such
  a colouring. Form an $(r,t)$-preference order, where the $i$th order on
  $[t]$ is determined by how frequently the colours are used on $V_i$. That
  is, in the $i$th order, the member of $[t]$ in relative position $1$ is
  the colour appearing most often on $V_i$ and the member in relative
  position $1/t$ is the colour appearing least often (ties can be broken
  arbitrarily). By Definition~\ref{defn:fg}, there is some colour, green
  say, such that if $x_i$ is the position of green in the $i$th order, then
  $x_i\ge \beta(\alpha)$ for $1\le i\le r$, and $\prod_{i\ne i_x}x_i \ge
  f(r,\beta(\alpha),t)\ge f(r,\beta(\alpha))$. Because the second
  condition implies $x_i\ge f(r,\beta(\alpha))$ for $1\le i\le r$, and
  Theorem~\ref{thm:elem} states $f(r,\beta(\alpha))\ge
  f(r,1/r)=(1/r)^{r-1}$, we have $x_i\ge \zeta$ for $1\le i\le r$.

  Let $X$ be the set of vertices that are coloured green. We can find a
  lower bound for $|X_i|$ as follows. Let $Z$ be the set of colours at or
  below relative position $x_i$ in the $i$th order, that is, $Z$ contains
  green and the colours less popular on $V_i$. Let $X_i^*$ be the set of
  vertices in $V_i$ that are coloured with some colour in~$Z$. By
  definition of the $i$th order, $|X_i|\ge|X_i^*|/|Z|\ge |X_i^*|/t$.

  Now $|Z|=x_it$ because green has relative position~$x_i$, and we know
  $x_i\ge \zeta$.  So, by Lemma~\ref{lem:lists}, at least $nx_i^\ell/4$
  lists lie within $Z$, meaning at least $nx_i^\ell/4$ vertices in $V_i$
  have lists within~$Z$. All of these vertices necessarily choose a colour
  in~$Z$, and so lie within $X_i^*$. Therefore $|X_i^*|\ge nx_i^\ell /4$
  and hence $|X_i|\ge |X_i^*|/t\ge nx_i^\ell /4t$.

  Consequently, using equation~(\ref{eqn:fng}), and writing $f$ for
  $f(r,\beta(\alpha))$, noting that $f\le f(2,0)=1/2$ (see
  Theorem~\ref{thm:elem}), we have
  \begin{align*}
    \prod_{i\ne i_X}|X_i|\ge\prod_{i\ne i_x}{nx_i^\ell \over 4t} \ge
    f^\ell \left({n\over 4t}\right)^{r-1} 
    &=   f^{g(r,\alpha)\log_r d}f^{-6r\log\log   d}
    \left({n\over 4t}\right)^{r-1} \\
    &\ge {n^{r-1}\over d}\left(2^{6\log\log d}\over 4t\right)^{r-1}\,.
  \end{align*}
  Now $2^{6\log\log d}/4t\ge \zeta 2^{6\log\log d}/9\ell^2 \ge
  (1/r)^{r-1}2^{6\log\log d}/10\log^2_r d\ge \log^2 d$. Thus $\prod_{i\ne
    i_X}|X_i|\ge  n^{r-1}(\log^2 d)/d$. But $G$ has property $I(r,n,d)$ and
  so $X$ cannot be an independent set, in contradiction to it being the set
  of green vertices in a proper colouring.
\end{proof}

We remark that, if the set $X$ in this proof were a random set of vertices,
then the proof would work for every $r$-partite $r$-graph~$G$ even without
assuming $I(r,n,d)$, because a random set with the specified lower bounds
on $|X_i|$ would not be independent. In fact the set of vertices whose lists
lie within $Z$ is random, but there seems no reason why the set $X$ itself
should be random.

\section{Random $r$-partite hypergraphs}\label{sec:randg}

We begin with a lemma that we shall use several times when treating various
kinds of random $r$-partite hypergraphs on the vertex set
$V_1\cup\cdots\cup V_r$.

\begin{lem}\label{lem:y}
  Let some probability distribution be given on the space of subsets of
  $V=V_1\cup\cdots\cup V_r$, where $|V|=rn$. Let
  $E$ be some event. Suppose, for each non-empty $X\subset
  V$, that $\Pr(X\in
  E)\le(|X_{i_X}|/2en)^{(r+1)|X_{i_X}|}$ holds.  Then
  $E=\emptyset$ almost surely, as $n\to\infty$.
\end{lem}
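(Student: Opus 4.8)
The plan is to use a union bound over all non-empty $X \subset V$, grouping the sets $X$ according to the value of $m = |X_{i_X}|$, i.e.\ according to the size of the largest part. First I would observe that if $X$ has largest part of size $m$, then $|X_i| \le m$ for all $i$, so $|X| \le rm$, and the number of such sets $X$ is at most the number of subsets of $V$ of size at most $rm$, which is at most $\binom{rn}{rm} \cdot (rm)$ or, more crudely, at most $(rn)^{rm} / (rm)! \le (ern/(rm))^{rm}$ (using $k! \ge (k/e)^k$ with $k = rm$), which I will bound by $(en/m)^{rm}$ after absorbing constants, or simply by something of the form $(Cn/m)^{rm}$. Actually it is cleaner to bound the count of sets $X$ with $|X_{i_X}| = m$ by $\sum_{j \le rm}\binom{rn}{j} \le (rm+1)\binom{rn}{rm} \le (2en/m)^{rm}$ for $n$ large, say.

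Next I would apply the hypothesis: for each such $X$, $\Pr(X \in E) \le (m/2en)^{(r+1)m}$. Multiplying the count of sets by this probability gives, for the contribution of a fixed value of $m$,
\[
\sum_{X : |X_{i_X}| = m} \Pr(X \in E) \;\le\; \Bigl(\frac{2en}{m}\Bigr)^{rm}\Bigl(\frac{m}{2en}\Bigr)^{(r+1)m} \;=\; \Bigl(\frac{m}{2en}\Bigr)^{m}.
\]
Since $1 \le m \le n$, we have $m/2en \le 1/2e < 1$, so each term is at most $(2e)^{-m} \le (2e)^{-1}$ for $m \ge 1$, and in fact the terms decay geometrically in $m$. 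Summing over $m$ from $1$ to $n$ gives
\[
\Pr(E \neq \emptyset) \;\le\; \sum_{m=1}^{n} \Bigl(\frac{m}{2en}\Bigr)^{m} \;\le\; \sum_{m=1}^{\infty} (2e)^{-m} \cdot \Bigl(\frac{m}{n}\Bigr)^{m}.
\]
For $m = 1$ the term is $1/(2en)$, which tends to $0$; for $m \ge 2$ one has $(m/n)^m \le (m/n)^2$ when $m \le n$, and $\sum_{m \ge 2}(2e)^{-m}(m/n)^2 = O(1/n^2)$. Hence $\Pr(E \neq \emptyset) = O(1/n) \to 0$ as $n \to \infty$, which is exactly the assertion that $E = \emptyset$ almost surely.

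The only mild subtlety — and the step I would be most careful about — is bookkeeping the combinatorial factor counting sets $X$ with a prescribed value of $|X_{i_X}|$: one must make sure the crude bound $(Cn/m)^{rm}$ genuinely beats $(m/2en)^{(r+1)m}$, i.e.\ that the extra factor $(r+1) - r = 1$ in the exponent of the probability bound is enough to overwhelm both the binomial count and the constant $C$. Tracking constants shows it is, because we get an entire spare factor of $(m/2en)^m \le (2e)^{-m}$, which is summable with room to spare; there is no delicate cancellation. Everything else is a routine union bound.
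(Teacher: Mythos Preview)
Your approach is essentially the paper's: group by $m=|X_{i_X}|$, bound the number of such $X$ by $(Cn/m)^{rm}$, multiply by the hypothesis $(m/2en)^{(r+1)m}$, and sum the resulting $(m/2en)^m$. One slip in exactly the place you flagged: the intermediate inequality $\sum_{j\le rm}\binom{rn}{j}\le (rm+1)\binom{rn}{rm}$ is false once $rm>rn/2$, since the sum then contains the central binomial coefficient. The paper avoids this by counting per class rather than globally: at most $(q+1)^r\le 2^{rq}$ choices for the shape $(|X_1|,\ldots,|X_r|)$ and $\prod_i\binom{n}{|X_i|}\le (en/q)^{rq}$ sets of each shape, giving $(2en/q)^{rq}$ directly. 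Your final bound $(2en/m)^{rm}$ is nevertheless true for all $m\le n$ (for $m>n/2$ just use $2^{rn}\le(2en/m)^{rm}$, which is an easy check), so the argument goes through once that step is patched.
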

\begin{proof}
  There are at most $(q+1)^r\le 2^{rq}$ possibilities for the
  tuple $(|X_1|,\ldots,|X_r|)$ if $|X_{i_X}|=q$, and, for each such
  possibility, the number of possible sets $X$ is
  $$
  \prod_i{n\choose |X_i|}\le \prod_i\left({en\over |X_i|}\right)^{|X_i|}\le
  \left({en\over q}\right)^{rq}\,,
  $$
  because $(e/x)^x$ is an increasing function of~$x$ for $x\le 1$. Hence
  the total probability that there is some set $X\in E$ is at most $
  \sum_{q\ge 1} 2^{rq}(en/q)^{rq}(q/2en)^{(r+1)q}
  =\sum_{q\ge1}(q/2en)^q$. Since $(x/2e)^x$ decreases for~$0<x\le 1$, the
  first $\sqrt n$ terms of this sum add to at most $\sqrt n(1/2en)$, which
  tends to zero. Since $q\le n$, the remaining terms add to at most
  $\sum_{q\ge\sqrt n}(1/2e)^q$, which also tends to zero. Therefore
  $E$ is almost surely empty.
\end{proof}

The proof of Theorem~\ref{thm:randg} involves a routine
verification. In fact, we do slightly more work than we need to, though the
extra effort involved is negligible. We show that $G$ almost surely has the
two stronger properties $I'(r,n,d)$ and $D'(r,n,d)$. Property $I'(r,n,d)$
asserts that every set $X$ containing at most $n/2d^{1/(r-1)}$ edges
satisfies~(\ref{eqn:I}), and Property $D'(r,n,d)$ asserts that every set
$X$ satisfying~(\ref{eqn:D}) is $(4(\log d/\log\log d)-1)$-degenerate.  The
reason for adding this complication is that we can copy over the proof
directly for use again in~\S\ref{sec:regg}.

\begin{proof}[Proof of Theorem~\ref{thm:randg}.]
  Let $G\in \mathcal{G}(n,r,p)$ be a random $r$-partite $r$-uniform
  hypergraph and let $d=pn^{r-1}\ge d_0$.

  Let $X\subset V(G)$ and let $x_i=|X_i|/n$. Let $S$ be the number of edges
  in $G[X]$. Then $S\in{\rm Bi}(\prod_{i=1}^r|X_i|, p)$, having mean
  $\lambda=p\prod^r_{i=1}|X_i|=d|X_{i_X}|\prod_{i\ne i_X}x_i$.

  Let $E$ be the collection of sets $X\subset V(G)$ such that $\prod_{i\ne
    i_X}|X_i|\ge n^{r-1}(\log^2d)/d$ but $G[X]$ has at most
  $n/2d^{1/(r-1)}$ edges. To show that $G$ almost surely has property
  $I'(r,n,d)$, we must show $E=\emptyset$ almost surely, and to do this we
  apply Lemma~\ref{lem:y}. Let $X\subset V$. If $\prod_{i\ne i_X}|X_i|<
  n^{r-1}(\log^2d)/d$ then $X\notin E$ so $\Pr(X\in E)=0$.  If $\prod_{i\ne
    i_X}|X_i|\ge n^{r-1}(\log^2d)/d$ then $\Pr(X\in E)$ is the probability
  that $S\le n/2d^{1/(r-1)}$. In this case, $\prod_{i\ne i_X}x_i\ge
  (\log^2d)/d$ so $\lambda > |X_{i_X}|\log^2d$. Moreover $|X_{i_X}|\ge
  (\prod_{i\ne i_X}|X_i|)^{1/(r-1)}> n/d^{1/(r-1)}$. Hence certainly
  $\Pr(X\in E)\le \Pr(S\le \lambda/2)\le e^{-\lambda/8}$ by
  Proposition~\ref{prop:chernoff}, so $\Pr(X\in E)\le
  e^{-|X_{i_X}|(\log^2d)/8}$.  Therefore, for Lemma~\ref{lem:y} to apply,
  it is enough to show that $e^{-(\log^2d)/8}\le
  (|X_{i_X}|/2en)^{r+1}$. But $|X_{i_X}|\ge n/d^{1/(r-1)}$ so we need only
  show that $e^{-(\log^2d)/8}\le (1/2ed^{1/(r-1)})^{r+1}$, which easily
  holds if $d$ is large.

  Now let $\mathcal{X}=\{X\subset V:\prod_{i\ne i_X}|X_i|\le
  n^{r-1}/d\}$. To show that $G$ almost surely has property $D'(r,n,d)$ we
  must show, almost surely, that every $X\in\mathcal{X}$ is
  $(k-1)$-degenerate, where $k=4\log d/\log\log d$. Notice that if
  $X\in\mathcal{X}$ and $Y\subset X$ then $Y\in\mathcal{X}$, and therefore
  to show every $X\in\mathcal{X}$ is $(k-1)$-degenerate it suffices to show
  that every $X\in\mathcal{X}$ is either empty or has a vertex of degree at
  most~$k-1$. We shall in fact show that if $X\in\mathcal{X}$ and
  $X\ne\emptyset$ then $G[X]$ contains fewer than $k|X_{i_X}|$ edges, and
  so the largest class of $X$ has a vertex of degree less than~$k$.

  So let $E=\{X\in\mathcal{X}:\,X\ne\emptyset,\,S\ge k|X_{i_X}|\}$, where
  $S$ is the number of edges in~$G[X]$. We wish to show that $E=\emptyset$
  almost surely, and we again use Lemma~\ref{lem:y}. Since $S\in{\rm
    Bi}(\prod_{i=1}^r|X_i|, p)$, the probability that $S\ge k|X_{i_X}|$ is
  at most
  $$
  {\prod_i|X_i|\choose k|X_{i_X}|}p^{k|X_{i_X}|}\le\left(ep\prod_i|X_i|\over
    k|X_{i_X}|\right)^{k|X_{i_X}|}=\left(ed\prod_{i\ne i_x}x_i\over
    k\right)^{k|X_{i_X}|}\,,
  $$
  where $x_i=|X_i|/n$. To apply the lemma successfully, we need
  $(ed\prod_{i\ne i_x}x_i/ k)^k\le (|X_{i_X}|/2en)^{r+1}$, or $s\le 1$
  where $s=(2en/|X_{i_X}|)^{r+1}(ed\prod_{i\ne i_x}x_i/ k)^k$. Let
  $z=d^{-1/(r-1)}$. For $|X_{i_X}|\le zn$, we use the inequality $\prod_{i\ne
    i_x}x_i\le (|X_{i_X}|/n)^{r-1}$, and so $s\le
  (2en/|X_{i_X}|)^{r+1}(ed(|X_{i_X}|/n)^{r-1}/k)^k$: this is an increasing
  function of~$|X_{i_X}|$ (we can assume $k>3$ because $d_0$ is large) and
  so $s\le (2e/z)^{r+1}(edz^{r-1}/k)^k = (2e/z)^{r+1}(e/k)^k$. For
  $|X_{i_X}|\ge zn$, we use instead that $\prod_{i\ne i_x}x_i\le 1/d$
  because $X\in\mathcal{X}$, and therefore $s\le
  (2en/|X_{i_X}|)^{r+1}(e/k)^k\le(2e/z)^{r+1}(e/k)^k$.  Consequently
  $s\le(2e/z)^{r+1}(e/k)^k\le (2e)^{r+1}d^3(e/k)^k$ holds for
  every~$X\in\mathcal{X}$, and this bound is less than one because $k=4\log
  d/\log\log d$ and $d_0$ is large. This shows that, almost surely, no
  $X\in\mathcal{X}$ has more than $k|X_{i_X}|$ edges, and almost surely $G$
  has property $D(r,n,d)$.
\end{proof}

\section{Regular $r$-partite hypergraphs}\label{sec:regg}

In this section we aim to prove Theorem~\ref{thm:regg}. Rather than apply
the configuration model, which would work only for $n$ much larger
than~$d^4$, we work instead with the space $\mathcal{H}(n,r,d)$ of
$d$-regular $r$-partite hypergraphs that are the union of $d$ independently
chosen perfect matchings $M_1,\ldots,M_d$. So $M_i$ is a set of $n$ pairwise
disjoint edges, and $M_1,\ldots,M_d$ are chosen uniformly and independently
from all possible matchings. Hypergraphs in
$\mathcal{H}(n,r,d)$ may have multiple edges.

An $r$-graph $H\in\mathcal{H}(n,r,d)$ is unlikely to be simple, but a small
modification of it, $\widehat{H}$, will be simple. Theorem~\ref{thm:regg}
holds if $\widehat{H}$ has properties $I(r,n,d)$ and $D(r,n,d)$; for this
to happen, we require $H$ to satisfy $I'(r,n,d)$ and $D'(r,n,d)$, described
in~\S\ref{sec:randg}.

\begin{lem}\label{lem:IDdash}
  With probability tending to one as $d\to\infty$, $H\in\mathcal{H}(n,r,d)$
  has properties $I'(r,n,d)$ and $D'(r,n,d)$.
\end{lem}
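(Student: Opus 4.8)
The strategy is to mirror, almost verbatim, the computations carried out in the proof of Theorem~\ref{thm:randg}, replacing the product measure $\mathcal{G}(n,r,p)$ with the matching measure $\mathcal{H}(n,r,d)$. The only genuinely new ingredient is a tail bound for the number $S$ of edges of $H$ lying inside a fixed set $X\subset V$; once such a bound is in hand, one simply feeds it into Lemma~\ref{lem:y} exactly as before. So the first step is to understand the distribution of $S$ under $\mathcal{H}(n,r,d)$.

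Fix $X\subset V$ with $x_i=|X_i|/n$ and put $q=|X_{i_X}|$. For a single random perfect matching $M_j$, the number of its $n$ edges that fall inside $X$ is a sum of (negatively correlated) indicator variables, one per edge of $M_j$, each with success probability $\prod_{i\ne i_X} (|X_i|-\text{(small)})/(n-\text{(small)}) \le \prod_{i\ne i_X} x_i \cdot (1+o(1))$; more carefully, the number of edges of a uniform perfect matching meeting $X$ in every class is of the form $Y=|M_j \cap \text{(something)}|$ and is stochastically dominated by a binomial, or — better — by Remark~\ref{rem:chernoff} is itself a sum of independent Bernoulli variables, so the Chernoff bound of Proposition~\ref{prop:chernoff} applies. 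Summing over the $d$ independent matchings, $S=\sum_{j=1}^d S_j$ is a sum of independent variables with mean $\lambda \le d q \prod_{i\ne i_X} x_i \cdot (1+o(1))$, and in fact $\lambda = (1+o(1)) d q \prod_{i\ne i_X}x_i$; crucially $S$ is again (dominated by, and for the upper-tail direction controlled by) a binomial-type variable, so both the upper-tail estimate $\Pr(S\ge k q)\le (e\lambda/kq)^{kq}$ and the lower-tail estimate $\Pr(S\le \lambda/2)\le e^{-\lambda/8}$ remain valid, up to the harmless $(1+o(1))$ factors.

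With these two tail bounds established, the rest is a transcription. For property $I'(r,n,d)$: take $E$ to be the set of $X$ with $\prod_{i\ne i_X}|X_i|\ge n^{r-1}(\log^2 d)/d$ but fewer than $n/2d^{1/(r-1)}$ edges inside; as in Theorem~\ref{thm:randg} one gets $\lambda > q\log^2 d$ and $q\ge n/d^{1/(r-1)}$, hence $\Pr(X\in E)\le e^{-q(\log^2 d)/8}$, and the inequality $e^{-(\log^2 d)/8}\le (1/2ed^{1/(r-1)})^{r+1}$ lets Lemma~\ref{lem:y} close the case. For property $D'(r,n,d)$: with $k=4\log d/\log\log d$ and $\mathcal{X}=\{X:\prod_{i\ne i_X}|X_i|\le n^{r-1}/d\}$, take $E=\{X\in\mathcal{X}:X\ne\emptyset,\ S\ge (k-1)q\}$ (the slightly weaker threshold $k-1$, to match the definition of $D'$), bound $\Pr(X\in E)\le (ed\prod_{i\ne i_X}x_i/(k-1))^{(k-1)q}$ via the upper-tail estimate, split into the ranges $q\le d^{-1/(r-1)}n$ and $q\ge d^{-1/(r-1)}n$ precisely as before, and verify $s=(2e/z)^{r+1}(e/(k-1))^{k-1}<1$ with $z=d^{-1/(r-1)}$. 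Then Lemma~\ref{lem:y} gives $E=\emptyset$ almost surely, so every nonempty $X\in\mathcal{X}$ has a vertex of degree at most $k-1$ in its largest class, hence is $(k-1)$-degenerate; since $G$ is $r$-partite and sub-sets of $\mathcal{X}$-sets lie in $\mathcal{X}$, this gives $D'(r,n,d)$.

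The one point requiring actual care — the \emph{main obstacle} — is justifying that $S$ obeys the Chernoff bounds despite the edges of a single perfect matching being dependent (they partition each $V_i$). This is exactly the situation anticipated in Remark~\ref{rem:chernoff}: one views the edge-count as a variable of the stated form and invokes \cite{VM} (or negative association of the matching indicators) to conclude it is a sum of independent Bernoullis, so Proposition~\ref{prop:chernoff} applies in both tail directions. The computation of the mean $\lambda$ also needs the trivial observation that, for a single matching, $\Pr(\text{a given edge-slot lands in }X)\le \prod_{i\ne i_X}x_i$ (each class is sampled without replacement, which only decreases the probability), so $\mathbb{E}[S]\le d q\prod_{i\ne i_X}x_i$ and, in the regime we need, this is tight up to $(1+o(1))$; these factors are swamped by the $\log^2 d$ and $k$ margins already present in the Theorem~\ref{thm:randg} argument, so no estimate is actually threatened.
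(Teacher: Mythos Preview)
Your proposal is correct and follows essentially the same approach as the paper: identify the edge-count $S$ as a sum of independent Bernoulli variables via Remark~\ref{rem:chernoff} (the paper makes this explicit by generating a matching from $r-1$ independent bijections $V_i\to V_1$, so that the count becomes $|X_1\cap T_2\cap\cdots\cap T_r|$), then copy the Theorem~\ref{thm:randg} computations. Two small notes: the mean is actually \emph{exact}, $\mathbb{E}S=d|X_{i_X}|\prod_{i\ne i_X}x_i$, since the partner of $v\in V_1$ in each $V_i$ is uniform, so your $(1+o(1))$ cushion is unnecessary; and for the $D'$ upper tail the paper uses a direct union bound over tuples $(k_1,\dots,k_d)$ with $\sum k_j=k|X_1|$ and the multinomial identity, rather than a Chernoff bound, though both routes yield the same $(ed\prod_{i\ne i_X}x_i/k)^{k|X_{i_X}|}$.
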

\begin{proof}
  Let $H\in\mathcal{H}(n,r,d)$ be a random $d$-regular $r$-partite
  $r$-uniform hypergraph. Let $X\subset V(H)$ and let $x_i=|X_i|/n$. Let
  $R$ be the number of edges in $H[X]$. Recall that in the proof of
  Theorem~\ref{thm:randg} we studied the distribution of a variable very
  similar to~$R$, namely~$S$, the number of edges in $G[X]$ where $G\in
  \mathcal{G}(n,r,p)$ and $d=pn^{r-1}$. Thus
  $\mathbb{E}S=p\prod_{i=1}^r|X_i|=d|X_{i_X}|\prod_{i\ne i_X}x_i$. When proving
  that $G$ had property $I'(r,n,d)$ we used only that
  $\mathbb{E}S=d|X_{i_X}|\prod_{i\ne i_X}x_i$ and that the bound in
  Proposition~\ref{prop:chernoff} holds for~$S$. We shall show that the
  same bound holds for~$R$, and moreover
  $\mathbb{E}R=\mathbb{E}S$. Therefore the proof that $G$ has $I'(r,n,d)$
  can be used verbatim to show that $H$ has $I'(r,n,d)$.

  Let $Z$ be the random variable that is the number of edges of $M_1$ lying
  inside~$X$. For notational convenience, suppose $X_{i_X}=X_1$. Clearly
  $\mathbb{E}Z=|X_1|\prod_{i=2}^rx_i$, since the edge containing $v\in V_1$
  has probability $\prod_{i=2}^rx_i$ of meeting each $X_i$, $i\ge2$. Now
  $M_1$ can be generated from $r-1$ independent random bijections $V_i\to
  V_1$, $2\le i\le r$, the edge of $M_1$ containing $v\in V_1$ being $v$
  together with those vertices that map to~$v$. So $Z=|X_1\cap
  T_2\cap\cdots\cap T_r|$, where $T_i$ is the image of $X_i$, $2\le i\le
  r$. By Remark~\ref{rem:chernoff}, $Z$ is a sum of independent Bernoulli
  variables. Finally, $R$ is the sum of $d$ independent copies of~$Z$, so
  it too is a sum of independent Bernoulli variables, and hence
  Proposition~\ref{prop:chernoff} holds for~$R$. Moreover
  $\mathbb{E}R=d\mathbb{E}Z=\mathbb{E}S$, and this completes the proof that
  $H$ has $I'(n,r,d)$.

%  Now $Z$ behaves similarly to a variable $W\in{\rm Bi}(|X_1|,
%  \prod_{i=2}^rx_i)$; specifically, we observe that
%  $\mathbb{E}Z=\mathbb{E}W$, and we claim that $\mathbb{E}
%  e^{uZ}\le\mathbb{E} e^{uW}$ for every real~$u$. The claim is verified by
%  induction on~$r$. For $r=2$ the variable $Z$ is hypergeometrically
%  distributed with parameters $n$, $X_1$ and $x_2n$, and the claim is
%  exactly \cite[Theorem~2.10]{JLR}. For $r>2$, let $Z'=|X_1\cap
%  Y_2\cap\cdots\cap Y_{r-1}|$. Conditional on $Z'$, $Z$ is
%  hypergeometrically distributed with parameters $n,|Z'|,x_rn$, so by the
%  case $r=2$, $\mathbb{E}(e^{uZ}\mid Z')\le \mathbb{E} e^{uU}$ where
%  $U\in{\rm Bi}(Z',x_r)$. Now $\mathbb{E} e^{uU}=[1-x_r+e^ux_r]^{Z'}
%  =e^{wZ'}$ where $e^w=1-x_r+e^ux_r$. Thus $\mathbb{E} e^{uZ}=
%  \mathbb{E}(\mathbb{E}(e^{uZ}\mid Z')) \le \mathbb{E}e^{wZ'}$. By the
%  induction hypothesis, $\mathbb{E}e^{wZ'}\le \mathbb{E}e^{wW'}$ where
%  $W'\in{\rm Bi}(|X_1|,\prod_{i=2}^{r-1}x_i)$. So $\mathbb{E} e^{uZ} \le
%  \mathbb{E}e^{wW'}=[1-\prod_{i=2}^{r-1}x_i+e^w\prod_{i=2}^{r-1}x_i]^{|X_1|}
%  = [1-\prod_{i=2}^rx_i+e^u\prod_{i=2}^rx_i]^{|X_1|}=\mathbb{E}e^{uW}$, as
%  claimed.

  For the proof that $H$ has $D'(n,r,d)$ we again copy from the proof of
  Theorem~\ref{thm:randg}, and again assume $X_{i_X}=X_1$. Let $T\subset
  X_1$, $|T|=k_1$. The probability that $T\subset T_i$, where $T_i$ is as
  in the previous paragraph, is ${n-k_1\choose
    |X_i|-k_1}{n\choose|X_i|}^{-1}\le x_i^{k_1}$. Thus the probability is
  at most $(\prod_{i=2}^rx_i)^{k_1}$ that, for every $v\in T$, the edge of
  $M_1$ meeting $v$ lies inside $X$. So the probability that $X$ contains
  at least $k_1$ edges of $M_1$ is at most ${|X_1|\choose
  k_1}(\prod_{i=2}^rx_i)^{k_1}$. If $R\ge k|X_1|$, that is, $H[X]$ has at
least $k|X_1|$ edges, then there are numbers $k_1,\ldots,k_d$ with
$k_1+\cdots+k_d=k|X_1|$ such that $X$ has $k_j$ edges of $M_j$, $1\le j\le
d$. Thus $\Pr(R\ge k|X_1|)\le \sum_{k_1+\cdots+k_d=k|X_1|}\prod_{j=1}^d
{|X_1|\choose k_j}(\prod_{i=2}^rx_i)^{k_j} =
{d|X_1|\choose k|X_1|}(\prod_{i=2}^rx_i)^{k|X_1|}\le
((ed/k)\prod_{i=2}^rx_i)^{k|X_1|}$. But this is exactly the same as the
bound on $\Pr(S\ge k|X_{i_X}|)$ that was used in the proof of
Theorem~\ref{thm:randg}, so, copying the rest of the proof verbatim, we
have that $H$ has $D'(n,r,d)$ almost surely.
\end{proof}

The next lemma describes the modification of $H\in\mathcal{H}(n,r,d)$
that produces~$\widehat H$. Because $H$ is close to simple, we can remove
just a few edges to achieve simplicity, and replace them with well-chosen
new edges to preserve regularity.

\begin{lem}\label{lem:hhat}
  There is a number $d_3=d_3(r)$ such that the following holds. Let $d$ be
  an integer with $d\ge d_3$ and let $n\ge r^5d^4$. Then, with probability
  at least~$1/8$, $H\in\mathcal{H}(n,r,d)$ has the following
  property. There is a set $I$ of at most $r^3d^2$ independent (that is,
  pairwise disjoint) edges in~$H$, and a set $I'$ of $|I|$ independent
  edges none of which is in~$H$, such that $H-I+I'$ is $d$-regular and
  simple.
\end{lem}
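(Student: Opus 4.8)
The plan is to show that, with probability at least $1/8$, the hypergraph $H\in\mathcal{H}(n,r,d)$ has only a small number of "bad" edges — edges that coincide with or share $\ge2$ vertices with another edge — and that these bad edges, together with a few of their neighbours, can be collected into an independent set $I$ of size at most $r^3d^2$; then we swap them out for a carefully chosen independent set $I'$ of the same size that restores $d$-regularity and introduces no new coincidences.

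First I would estimate the expected number of "collisions". Two edges $e\in M_a$, $f\in M_b$ ($a\ne b$) that share at least two vertices: for a fixed pair of vertices in two distinct classes, the probability that some edge of $M_a$ and some edge of $M_b$ both contain both of them is $O(1/n^2)$ per ordered pair of matchings, and summing over the $\binom{d}{2}$ pairs of matchings and the $O(n^2)$ choices of the shared vertex-pair gives an expected number of collisions that is $O(d^2)$. Likewise the expected number of repeated edges (an edge of $M_a$ equal to an edge of $M_b$) is $O(d^2/n^{r-2})=O(d^2)$, which is even smaller. By Markov's inequality, with probability at least (say) $3/4$ the total number of collision-pairs is at most $Cd^2$ for a suitable constant $C=C(r)$; combined with Lemma~\ref{lem:IDdash} (which holds with probability tending to $1$, so certainly $\ge 7/8$ for $d\ge d_3$) we get, with probability at least $1/8$, that $H$ has properties $I'(r,n,d)$, $D'(r,n,d)$ and at most $Cd^2$ collisions. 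Each collision involves $O(1)$ edges, so there is a set $I_0$ of at most $C'd^2$ edges whose removal makes $H$ simple.

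Next I would enlarge $I_0$ to an independent (pairwise disjoint) set $I$, still of size $O(d^2)$, so that the surgery is local and controllable: greedily pick edges of $I_0$ one at a time, and whenever a chosen edge meets a previously chosen one, we nonetheless keep track of the $\le r$ classes and delete all of $I_0$'s edges through those vertices — since $H$ is $d$-regular, each vertex lies in only $d$ edges, so this inflates the count by at most a factor $rd$, giving $|I|\le rd\cdot C'd^2$; absorbing constants into $r^3$ (valid for $d\ge d_3$ large, using $n\ge r^5 d^4$ only later) yields $|I|\le r^3d^2$ after trimming $I$ back down to contain all the originally-bad edges. (One must be slightly careful to ensure the bad edges themselves survive into $I$; picking them first and only then disjointifying handles this.) Removing $I$ drops the degree of each vertex of $\bigcup I$ by exactly one (since $I$ is a matching) and leaves all other degrees equal to $d$; the resulting hypergraph is simple because $I\supseteq I_0$.

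The main obstacle — and the step requiring the most care — is constructing $I'$: we need $|I|$ pairwise disjoint new edges, avoiding every edge already in $H-I$, such that each vertex of $\bigcup I$ is covered exactly once (restoring $d$-regularity) and no two edges of $I'$, nor an edge of $I'$ together with an old edge, share two vertices (preserving simplicity). The idea is to process the edges of $I$ one at a time: the vertices needing a new edge in class $V_i$ form a set $W_i$ with $|W_i|=|I|\le r^3d^2$, and by the perfect-matching structure there is a natural bijection to match them up, but we must choose the bijections to avoid the $O(d\cdot |W_i|)$ forbidden pairs (from incident old edges) and the $O(|I|)$ pairs already used by $I'$ — a simple greedy/counting argument works because the number of forbidden partners of any vertex is at most $rd + |I| = O(r^3d^2)$, which is far smaller than $n\ge r^5d^4$, so at each step a legal choice exists. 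This is precisely where the hypothesis $n\ge r^5d^4$ is used. Finally one checks $H-I+I'$ is $d$-regular (each vertex lost one edge from $I$ and gained one from $I'$; all others untouched) and simple (no collisions survive in $H-I$, and $I'$ was built to create none), completing the proof.
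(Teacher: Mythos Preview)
Your construction of $I'$ has a genuine gap. You want each vertex of $\bigcup I$ to be covered exactly once by $I'$ (to restore $d$-regularity), which forces $I'$ to be a perfect matching on the sets $W_i$, each of size exactly $|I|$. Hence when you greedily choose a partner in class~$V_i$ for some $v\in W_1$, the pool of candidates has size $|W_i|=|I|\le r^3d^2$, \emph{not}~$n$. Your forbidden-partner count $rd+|I|$ is being compared against the wrong quantity; against $|W_i|$ the greedy argument fails (near the end of the process there is essentially no choice left, and a single forbidden partner blocks you). A random-matching or Hall-type argument on $W_1\cup\cdots\cup W_r$ hits the same wall: the expected number of forbidden pairs landing inside a random matching is of order~$d$, not $o(1)$. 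Your arithmetic in the $I$-step is also off --- inflating $C'd^2$ by a factor $rd$ gives $C'rd^3$, which is not $\le r^3d^2$ for large~$d$ --- though this is a lesser issue.

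The paper avoids both problems by a different device. First, a sequence of first-moment estimates shows that with probability at least $1/8$ the ``butterflies'' $\{e,f\}$ with $|e\cap f|\ge2$ are already pairwise vertex-disjoint and number $b\le r^2d^2$; so no enlargement of the bad-edge set is needed. The key idea for $I'$ is then a \emph{two-edge switch}: for each butterfly $\{e_j,f_j\}$ one picks a helper edge $g_j\in E(H)$ such that no edge of $H$ meets both $g_j$ and the set $Q$ of vertices in butterflies, in previously chosen helpers, or in edges through $e_j\cap f_j$. This is where $n\ge r^5d^4$ is used: $|Q|\le 3r^3d^2$, so at most $|Q|\cdot d\cdot rd<nd$ edges are excluded and a valid $g_j$ exists. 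Swapping one vertex between $e_j$ and $g_j$ yields $e_j',g_j'$ with $e_j\cup g_j=e_j'\cup g_j'$; one takes $I=\{e_1,g_1,\ldots,e_b,g_b\}$ and $I'=\{e_1',g_1',\ldots,e_b',g_b'\}$ (so $|I|=2b\le r^3d^2$) and checks that the switch creates no new butterfly. The point is that $I$ contains not only bad edges but also freely chosen helper edges from all of~$H$, which supplies the room your rematching-on-$\bigcup I$ approach lacks.
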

\begin{proof}
  A pair of edges $\{e,f\}$ with $|e\cap f|\ge 2$ is called a {\em
    butterfly}. The {\em body} of the butterfly is $e\cap f$. The edges $e$
  and $f$ are the {\em wings} of the butterfly. An $r$-graph is simple if
  it has no butterflies. We make a series of assertions, each of which
  holds with probability (conditional on previous assertions) at least
  $7/8$, if $d_3$ is large enough.
  \begin{itemize}
  \item[(i)] Every butterfly $\{e,f\}$ satisfies $|e\cap f|= 2$. This is
    because the expected number of butterflies with $|e\cap f|\ge 3$ is at
    most ${r\choose 3} n^3{d\choose 2}(1/n^2)^2<1/8$. (To see this, let
    $\{u,v,w\}\subset e\cap f$. There are ${r\choose 3}$ ways to choose
    classes $V_i$ for $u,v,w$, $n^3$ ways to choose $\{u,v,w\}$ in the
    classes, and ${d\choose 2}$ ways to choose matchings $M_i$ containing
    $e$ and $M_j$ containing~$f$. The probability that the edge of $M_i$
    containing $u$ also contains $\{v,w\}$ is $1/n^2$, and likewise for
    $M_j$. Similar considerations explain subsequent assertions.)
  \item[(ii)] No two butterflies have the same body. This is because,
    assuming~(i), the expected number of pairs of butterflies $\{e,f\}$ and
    $\{e,g\}$ with $e\cap f=e\cap g$ is at most ${r\choose2}n^2{d\choose 3}
    (1/n)^3<1/8$.
  \item[(iii)] Distinct butterflies have disjoint bodies. For suppose
    butterflies $\{e,f\}$ and $\{g,h\}$ have bodies $\{u,v\}$ and
    $\{u,w\}$, where $v\ne w$ by~(ii). The expected number of such with
    $e=g$ is at most $r{r\choose2}n^3d{d\choose2}(1/n^2)(1/n)^2<1/16$, and
    with $e\ne g$ is at most $r{r\choose2}n^33{d\choose4}(1/n)^4<1/16$.
  \item[(iv)] No two butterflies share a wing. This is because, assuming
    (ii) and (iii), the expected number of pairs of butterflies $\{e,f\}$
    and $\{e,g\}$ is at most
    $3{r\choose4}n^4d{d\choose2}(1/n^3)(1/n)(1/(n-1))<1/8$. Here the
    factors $1/n$ and $1/(n-1)$ arise from $f$ and $g$ containing their
    bodies, allowing for the fact that, conceivably, $f$ and $g$ come from
    the same matching~$M_i$.
  \item[(v)] Distinct butterflies are disjoint. For suppose butterflies
    $\{e,f\}$ and $\{g,h\}$ have $e\cap g\ne\emptyset$. By~(iv) we cannot
    have $|e\cap g|\ge2$, for if $e=g$ then $\{e,f\}$ and $\{g,h\}$ share a
    wing, and if $e\ne g$ then $\{e,g\}$ is also a butterfly sharing a wing
    with both $\{e,f\}$ and $\{g,h\}$, which are distinct. Hence $|e\cap
    g|=\{u\}$ for some vertex~$u$. By~(iii) the expected number of these is
    at most
    $r{r\choose2}^2n^5{d\choose2}^2(1/n^2)^2(1/(n-1))^2<1/8$. Here we
    chose $u$ and the two bodies, followed by $e$ and $g$ and by $f$ and~$h$.
  \end{itemize}

  Let $b$ be the number of butterflies in~$H$. The expected value of $b$ is
  at most ${r\choose2} n^2{d\choose2}(1/n)^2 < r^2d^2/4$. So with
  probability at least~$1/8$, (i)--(v) all hold and $b\le
  r^2d^2<r^3d^2/2$. Let $\{e_1,f_1\},\ldots,\{e_b,f_b\}$ be the
  butterflies. Beginning with $I=I'=\emptyset$, we construct $I$ and $I'$
  in $b$ steps. At the $j$th step, we add two disjoint edges $\{e_j,g_j\}$
  of $H$ to $I$, and add to $I'$ two disjoint edges $\{e_j',g_j'\}$,
  neither of which is in~$H$, and satisfying $e_j\cup g_j=e_j'\cup g_j'$;
  this last property will ensure that $H-I+I'$ is $d$-regular. Property~(v)
  and the choice of $g_j$ will ensure the edges of $I$ are independent, and
  hence so are the edges of~$I'$. 

  To find these edges, consider $\{e_j,f_j\}$. Property~(i) holds so let
  $e_j\cap f_j=\{u,v\}$: for convenience we assume $u\in V_1$ and $v\in
  V_2$. Let $Q$ be the set of vertices in one of $e_1,f_1,\ldots,e_b,f_b$
  or in some edge of $I$ or in some edge containing either $u$ or $v$: then
  $|Q|\le 2br + |I|r+2dr\le 4br+2rd\le 3r^3d^2$. There are at most $|Q|d$
  edges meeting~$Q$, and at most $|Q|drd$ edges meeting these edges. But
  $|Q|drd\le 3r^5d^4<nd$, and $H$ has $nd$ edges. Hence there is an edge
  $g_j$ of $H$ so that no edge of $H$ meets both $g_j$ and~$Q$. Let $x$ and
  $y$ be the vertices of $g_j$ in $V_1$ and $V_2$ respectively, and put
  $e_j'=(e_j\setminus\{u\})\cup\{x\}$,
  $g_j'=(g_j\setminus\{x\})\cup\{u\}$. Since $e_j'$ and $g_j'$ meet $Q$, in
  $v$ and $u$ respectively, and both meet~$g_j$, neither $e_j'$ nor $g_j'$
  is in~$H$.

  By choice of $Q$ and by~(v), $e_j$ and $g_j$ are disjoint, $e_j\cup
  g_j=e_j'\cup g_j'$, and this set of $2r$ vertices is disjoint from
  any edge so far in~$I$ (and hence also disjoint from any edge in~$I'$),
  and is also disjoint from any butterfly. Furthermore, adding $e_j'$ to $H$
  does not create a butterfly: for if $\{e_j',f\}$ is such a butterfly then
  $f$ lies in $H$, $f\cap e_j\ne \emptyset$, $f\ne f_j$, so $|f\cap e_j|=1$
  and $x\in f$, contradicting the choice of~$g_j$. Likewise $\{g_j',h\}$
  cannot be a butterfly, where $h$ is in~$H$, because $\{g_j,h\}$ is not a
  butterfly, implying $u\in h$ and $h\cap g_j\ne\emptyset$, another
  contradiction. So the addition of $\{e_j',g_j'\}$ to $H$ will not create
  a butterfly. Thus after $b$ steps we reach sets $I$ and $I'$, with
  $|I|=|I'|=2b\le r^3d^2$, as described in the lemma.
\end{proof}

\begin{proof}[Proof of Theorem~\ref{thm:regg}]
  Take $H$ satisfying Lemmas~\ref{lem:IDdash} and~\ref{lem:hhat}, and let
  $\widehat{H}=H-I+I'$. By the properties of Lemma~\ref{lem:hhat},
  $\widehat{H}$ is $d$-regular and simple. Let $X$ be an independent set in
  $\widehat{H}$. In $H$, $X$ contains at most $|I|\le r^3d^2$
  edges. Recalling that $n\ge r^5d^4$, this means $H[X]$ has at most
  $n/2d^{1/(r-1)}$ edges and, since $H$ satisfies property $I'(r,n,d)$,
  this means $X$ satisfies~(\ref{eqn:I}). Therefore $\widehat{H}$ has
  property $I(r,n,d)$. Now suppose $X$ is a set
  satisfying~(\ref{eqn:D}). Since $H$ has property $D'(r,n,d)$, this means
  $H[X]$ is $(k-1)$-degenerate, where $k=4(\log\log d)/\log d$. But the
  edges of $\widehat{H}[X]$ not in $H[X]$ are independent, so
  $\widehat{H}[X]$ is $k$-degenerate. Therefore $\widehat{H}$ has property
  $D(r,n,d)$ also.
\end{proof}

\section{More on preference orders}\label{sec:pref}

In this section we aim to establish some basic properties of $f(r,\theta)$
and $g(r,\alpha)$. However the notion of an $(r,m)$-preference order~$P$
and the definition of $f_P(\theta)$ are tailored to suit the proof of
Theorem~\ref{thm:mainthm}, and in themselves are somewhat cumbersome to
work with. The value of $f_P(\theta)$ takes no account of any $x\in P$ with
$x_i<\theta$ for some~$i$, and for every $x\in P$ it takes no account of
$x_{i_x}$, making some information in $\{x:x\in P\}$ appear
redundant. Further, it can be difficult to manipulate simultaneously the
$r$ different orders in~$P$. 

These drawbacks are resolved by introducing the notion of a {\em cover},
which is nothing more than a perfect matching. Complete information about
the function $f(r,\theta)$ can (in principle) be found by studying covers,
without the complication and redundancy of preference orders. Moreover, to
obtain a useful lower bound on $f(r,\theta)$ it is more or less necessary
to work with covers.

\subsection{Preference orders and covers}

\begin{defn}\label{defn:cover}
  For $r\ge1$, an $r$-{\em cover} is an $r$-graph $Q$ with
  $V(Q)\subset[0,1]$ whose edges form a perfect matching: that is,
  $|V(Q)|=rn$ for some $n\in\mathbb{N}$ and the edge set $E(Q)$ of $Q$
  comprises $n$ pairwise disjoint edges. We define
  $$
  h(Q) = \max\{\textstyle\prod_{y\in e} y\,:\,e\in E(Q)\}\,.
  $$
  For $\theta\in[0,1/(r+1))$, we define an $(r,\theta,n)$-{\em cover} to be an
  $r$-cover $Q$ with $V(Q)=\{\theta + (1/(r+1)-\theta)j/n\,:\,j\in[rn]\}$. We
  further define
  \begin{align*}
    h(r,\theta,n)&=\min\{h(Q): Q \mbox{ is an $(r,\theta,n)$-cover}\}\\
    \mbox{and}\quad  h(r,\theta)&=\inf\{h(r,\theta,n)\,:\,n\in\mathbb{N}\}\,.
  \end{align*}
Moreover we define $h(r,1/(r+1))=\lim_{\theta\to (1/(r+1))^-}h(r,\theta)=1/(r+1)^r$.
\end{defn}

Observe that $1/(r+1)$ is always in the vertex set of an
$(r,\theta,n)$-cover (when $j=n$); another way to represent the vertex set
is in the form $\{1/(r+1)+jx: j=-n+1,-n+2,\ldots,(r-1)n\}$ where
$x=(1/(r+1)-\theta)/n$. Evidently $\theta^r < h(r,\theta,n)\le
(\theta+r(1/(r+1)-\theta))^r$ for all~$n$, so $\lim_{\theta\to
  (1/(r+1))^-}h(r,\theta)=1/(r+1)^r$, as asserted in the definition.

Notice some differences between a cover and a preference order. The edges
of $Q$ are unordered subsets whereas $\{x:x\in P\}$ consists of ordered
$r$-tuples. The value $h(Q)$ is the maximum, over {\em all} edges, of the
product of {\em all} numbers that are vertices of the edge. We avoid
numbers we are not interested in by specifying the vertex set of the cover:
thus all the vertices of an $(r,\theta,n)$-cover are larger
than~$\theta$. Covers 
are easier to work with than preference orders, but the two are related.

\begin{thm}\label{thm:h=f}
Let $r\in\mathbb{N}$, $r\ge2$ and let $\theta\in[0,1/r]$. Then
$f(r,\theta)=h(r-1,\theta)$.
\end{thm}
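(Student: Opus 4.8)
The plan is to establish the two inequalities $f(r,\theta)\le h(r-1,\theta)$ and $f(r,\theta)\ge h(r-1,\theta)$ separately, in each case converting one type of object into a sequence of objects of the other type whose values converge to the given one; the equality then follows on taking the infima in Definitions~\ref{defn:fg} and~\ref{defn:cover}. It is convenient first to record that one may take the parameters $m$ and $n$ as large as desired: given an $(r-1,\theta,n)$-cover one obtains an $(r-1,\theta,kn)$-cover of no larger $h$-value by replacing each vertex by $k$ nearby ones and contracting slightly so that no product increases, and an analogous refinement applies to preference orders, so the relevant infima are already approached along multiples of any fixed value. I would also note that the case $\theta=0$ is cleaner than the general one, essentially because the vertex set of an $(r-1,0,n)$-cover is exactly the bottom $(r-1)n$ points of the grid $\{1/(rn),\dots,1\}$.

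To prove $f(r,\theta)\le h(r-1,\theta)$, fix a near-optimal $(r-1,\theta,n)$-cover $Q$ with edges $e_1,\dots,e_n$ and build an $(r,m)$-preference order $P$ by an ``$r$-fold rotation'' of~$Q$: for each edge $e_j$ and each coordinate $p\in[r]$ there is an element of $[m]$ whose $p$th relative position is a ``large'' value just below~$1$ --- so that $i_x=p$ --- and whose remaining $r-1$ relative positions are the vertices of $e_j$, spread over the coordinates $\ne p$ so that each vertex of $e_j$ occurs exactly once in each coordinate. For such an element $\prod_{i\ne i_x}x_i=\prod_{y\in e_j}y\le h(Q)$. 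The remaining elements of $[m]$ receive ``filler'' relative positions, and the art of the construction is to choose $m$, hence the grid $\{1/m,\dots,1\}$, so that the vertices of $Q$ and the large values sit at grid points while the leftover grid points either lie below $\theta$ --- so the corresponding elements are excluded from the maximum defining $f_P(\theta)$ --- or can be grouped into filler elements whose product of non-maximal coordinates is at most $h(r-1,\theta)+o(1)$. Rounding the vertices of $Q$ onto the grid costs only $o(1)$, so $f_P(\theta)\le h(Q)+o(1)$; letting $m\to\infty$ and then $Q$ vary over near-optimal covers gives the inequality.

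For the reverse inequality $f(r,\theta)\ge h(r-1,\theta)$ I would start from an arbitrary $(r,m)$-preference order $P$ and extract covers from it. Each ``good'' element, meaning each $x\in P\cap[\theta,1]^r$, gives the $(r-1)$-multiset $e_x=\{x_i:i\ne i_x\}$ with $\prod_{y\in e_x}y\le f_P(\theta)$. If $f_P(\theta)>A^{r-1}$, where $A=(r-1)/r-(r-2)\theta$ is the largest vertex of an $(r-1,\theta,n)$-cover, then $h(r-1,\theta)\le A^{r-1}<f_P(\theta)$ and there is nothing to prove; otherwise cap any entry of any $e_x$ exceeding $A$ at $A$, which only decreases products (they stay $\le f_P(\theta)$) and forces every multiset into $(\theta,A]$. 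Now realise these multisets approximately and pairwise disjointly inside a fine $(r-1,\theta,n)$-cover --- a fine arithmetic progression has room to spare --- and complete the resulting partial matching to a perfect matching with filler edges, which can again be chosen to have product at most $h(r-1,\theta)+o(1)$. This produces an $(r-1,\theta,n)$-cover $Q$ with $h(Q)\le f_P(\theta)+o(1)$, and taking the infimum over $n$ and then over $P$ gives the inequality.

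The hard part is the bookkeeping in both directions needed to reconcile the two rigid structures: a preference order forces each coordinate to be an exact permutation of $\{1/m,\dots,1\}$, whereas a cover forces its vertex set to be an exact arithmetic progression, and once $\theta>0$ these ``shapes'' do not mesh without leftover vertices. In each construction the crux is to check that the leftover grid points, respectively the unused cover vertices, can always be organised into filler elements or filler edges whose relevant product is at most $h(r-1,\theta)+o(1)$; this in turn rests on monotonicity and continuity of $h(r-1,\cdot)$ --- the cover-side counterpart of Theorem~\ref{thm:elem} --- and on the precise form of the cover's vertex set, which is exactly what has been set up to make the accounting balance.
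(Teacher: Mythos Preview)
Your overall plan --- two inequalities, each proved by converting one structure into the other with $o(1)$ loss --- is the paper's plan, and your direction $f(r,\theta)\le h(r-1,\theta)$ (preference order from cover via cyclic ``$r$-fold rotation'') is essentially Lemma~\ref{lem:flth}. The paper in fact arranges the filler more cleanly than you do: it chooses $m$ so that the grid $\{1/rm,\dots,1\}$ splits exactly into four contiguous blocks $A\cup V(Q_1)\cup C\cup B$, with $A$ entirely below $\theta$, and every filler edge contains a vertex of~$A$. Thus every cyclic rotation of a filler edge has some coordinate $<\theta$ and is simply excluded from $f_P(\theta)$ --- no need to bound filler products at all.

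The real gap is in your direction $f(r,\theta)\ge h(r-1,\theta)$. Your argument is circular: you build a cover $Q$ whose non-filler edges have product $\le f_P(\theta)+o(1)$ and whose filler edges, you assert, can be chosen with product $\le h(r-1,\theta)+o(1)$. Granting that, you only get $h(r-1,\theta)\le h(Q)\le\max\{f_P(\theta),\,h(r-1,\theta)\}+o(1)$, which is vacuous precisely when $f_P(\theta)<h(r-1,\theta)$, the case you need. To make the argument go through you would need filler products $\le f_P(\theta)+o(1)$, and nothing you have written gives this: the leftover grid points after your capping-and-embedding step form no recognisable $(r-1,\theta',n')$-cover vertex set, so neither monotonicity nor continuity of $h(r-1,\cdot)$ applies to them. (There is a secondary issue too: after capping at $A$, a positive fraction of the values may pile up at $A$, so ``a fine arithmetic progression has room to spare'' glosses over real distortion.)

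The paper avoids the whole filler problem by a different manoeuvre (Lemma~\ref{lem:hltf}). Rather than stripping off the largest coordinate of each good $x\in P$ immediately, it first \emph{merges} all $r$ orders into a single $r$-cover $Q_1$ on the grid $\{1/rm,\dots,1\}$, with edges $\{x_1,x_2-1/rm,\dots,x_r-(r-1)/rm\}$. It then performs two rounds of edge \emph{swaps}, each of which can only decrease the relevant product $\psi(e)$: first, swap so that every edge meets the top block $B$ (the largest $m$ grid points) exactly once; second, swap so that every ``bad'' edge (one touching the bottom block $A$, i.e.\ coming from an $x$ with some coordinate $<\theta$) lies entirely inside $A\cup B\cup C$. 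After these swaps, deleting $A\cup B\cup C$ leaves an exact $(r-1)$-cover on a contiguous block of the grid --- no leftover vertices, no filler --- whose $h$-value is at most $f_P(\theta)$. A final similarity perturbation (Lemma~\ref{lem:similar}) moves this onto the correct $(r-1,\theta,n)$ vertex set at cost $O(1/m)$. The swap argument is the missing idea in your sketch.
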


As might be expected, the proof of this theorem comes by somehow merging
the $r$~orders of $P$ into one single cover, removing the redundant
elements and performing small perturbations of the hypergraphs. In this
context, we say that $Q$ and $Q'$ are {\em similar} if $Q$ is an
$(r,\theta,n)$-cover and $Q'$ is the unique $(r,\theta',n)$-cover such that
the bijection $\theta + (1/(r+1)-\theta)j/n\mapsto \theta' +
(1/(r+1)-\theta')j/n$ between $V(Q)$ and $V(Q')$ takes edges of $Q$ to
edges of~$Q'$.

\begin{lem}\label{lem:similar}
Let $Q$ be an $(r,\theta,n)$-cover and $Q'$ be an $(r,\theta',n)$-cover. If
$Q$ and $Q'$ are similar then $|h(Q)-h(Q')|\le r2^r|\theta-\theta'|$.
\end{lem}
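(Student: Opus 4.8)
The plan is to compare $Q$ and $Q'$ one edge at a time via the similarity bijection, reducing the statement to an estimate on how much a single edge-product can change, and then to control that change by a telescoping argument together with the observation that all vertices involved lie in $(0,1)$.

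First I would record the effect of the similarity bijection $\phi\colon V(Q)\to V(Q')$ at the vertex level. Writing $c=1/(r+1)-\theta$ and $c'=1/(r+1)-\theta'$, a vertex $y=\theta+cj/n$ of $Q$ (with $j\in[rn]$) is mapped to $\phi(y)=\theta'+c'j/n$, so
$$
y-\phi(y)=(\theta-\theta')+(c-c')\frac{j}{n}=(\theta-\theta')\Bigl(1-\frac{j}{n}\Bigr)\,.
$$
Since $1\le j\le rn$ we have $|1-j/n|\le r$ (indeed $\le r-1$ when $r\ge2$), hence $|y-\phi(y)|\le r\,|\theta-\theta'|$ for every vertex $y$ of $Q$. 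I would also note that every vertex of $Q$, and of $Q'$, lies in $(0,1)$: the smallest is $\theta+c/n>0$ and the largest is $\theta+cr=\tfrac{r}{r+1}+\theta(1-r)\le\tfrac{r}{r+1}<1$, using $0\le\theta<1/(r+1)$.

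Next I would pass from vertices to edge-products. If $e=\{y_1,\dots,y_r\}\in E(Q)$ then, by similarity, $\phi(e)=\{\phi(y_1),\dots,\phi(y_r)\}\in E(Q')$, and the standard telescoping identity gives
$$
\prod_{k=1}^r y_k-\prod_{k=1}^r\phi(y_k)=\sum_{k=1}^r\Bigl(\prod_{l<k}\phi(y_l)\Bigr)\,(y_k-\phi(y_k))\,\Bigl(\prod_{l>k}y_l\Bigr)\,.
$$
Since every $y_l$ and $\phi(y_l)$ lies in $(0,1)$, each of the $r$ summands has absolute value at most $|y_k-\phi(y_k)|\le r|\theta-\theta'|$, so $\bigl|\prod_k y_k-\prod_k\phi(y_k)\bigr|\le r^2|\theta-\theta'|$. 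As $\phi$ induces a bijection $E(Q)\to E(Q')$, the elementary inequality $|\max_i a_i-\max_i b_i|\le\max_i|a_i-b_i|$ yields
$$
|h(Q)-h(Q')|=\Bigl|\max_{e\in E(Q)}\prod_{y\in e}y-\max_{e\in E(Q)}\prod_{y\in e}\phi(y)\Bigr|\le\max_{e\in E(Q)}\Bigl|\prod_{y\in e}y-\prod_{y\in e}\phi(y)\Bigr|\le r^2|\theta-\theta'|\,,
$$
and $r^2\le r2^r$ for all $r\ge1$, giving the claimed bound (in fact a little more). There is no genuine obstacle here; the only points requiring care are that all vertices lie in $(0,1)$, so that products of fewer than $r$ of them are bounded by $1$, and that $j\in[rn]$ forces $|1-j/n|\le r$.
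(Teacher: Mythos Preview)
Your proof is correct. The overall shape is the same as the paper's---compare corresponding edges via the similarity bijection, bound the change in each edge-product, and pass to the maximum---but you use a different device for the product estimate. The paper assumes (without loss) $\theta<\theta'$, records the asymmetric displacement $-r\delta<\xi(y)-y\le\delta$ with $\delta=\theta'-\theta$, and then controls $\prod(y_i+\delta)-\prod y_i$ (and the companion with $r\delta$) by expanding over all $2^r$ subsets; this is where the factor $2^r$ enters, yielding $r2^r\delta$. You instead bound the displacement symmetrically by $r|\theta-\theta'|$ and telescope the product into a sum of $r$ terms, each bounded by $r|\theta-\theta'|$, giving the sharper $r^2|\theta-\theta'|$. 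Your route avoids the case split and the assumption $r2^r\delta<1$, and improves the constant; the paper's route is equally elementary but a little coarser. Either way the constant is irrelevant to the applications.
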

\begin{proof}
  We may suppose that $\theta<\theta'$ and, putting
  $\delta=\theta'-\theta$, that $r2^r\delta<1$ else the lemma is trivial.
  Let $\xi:V(Q)\to V(Q')$ be the bijection $\xi(\theta
  +(1/(r+1)-\theta)j/n)= \theta' + (1/(r+1)-\theta')j/n$. If $y=\theta
  +(1/(r+1)-\theta)j/n$, then $\xi(y)=y+\delta-j\delta/n$. Since $j\in[rn]$
  we have $y-r\delta< \xi(y)\le y+\delta$. If $e$ is an edge of $Q$ and
  $e'$ is the corresponding edge of $Q'$ then $\prod_{\xi(y)\in
    e'}\xi(y)\le \prod_{y\in e}(y+\delta)\le \prod_{y\in e}y + 2^r\delta$,
  so $h(Q)\le h(Q')+2^r\delta$. Likewise $\prod_{y\in e}y\le
  \prod_{\xi(y)\in e'}(\xi(y)+r\delta)\le \prod_{\xi(y)\in e'}\xi(y) +
  r2^r\delta$, so $h(Q)\le h(Q')+r2^r\delta$.
\end{proof}

To prove Theorem~\ref{thm:h=f} we first bound $h$ in terms of $f$.

\begin{lem}\label{lem:hltf} Let $r\ge2$, $\theta\in[0,1/r)$ and
  $m\in\mathbb{N}$. Then
  $h(r-1,\theta,n)\le f(r,\theta,m)+(r-1)2^{r-1}/m$ holds, where
  $n=m-r\lceil\theta m\rceil + r$.
\end{lem}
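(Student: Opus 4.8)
The plan is to pass from an optimal $(r,m)$-preference order to an $(r-1,\theta,n)$-cover by forgetting, in each tuple, its largest coordinate — which is exactly the factor that is omitted in $\prod_{i\ne i_x}x_i$ — and then nudging the surviving coordinates onto the prescribed grid $W=\{w_j:j\in[(r-1)n]\}$, $w_j=\theta+(1/r-\theta)j/n$.

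Concretely, I would fix $P=(<_1,\dots,<_r)$ with $f_P(\theta)=f(r,\theta,m)$ and, for $k\in[m]$, write $x^{(k)}=({\rm rpos}_{<_1}(k),\dots,{\rm rpos}_{<_r}(k))\in P$. Put $B_i=\{k:{\rm rpos}_{<_i}(k)<\theta\}$; for $\theta>0$ this has size $\lceil\theta m\rceil-1$, so $K_0=\{k:x^{(k)}\in[\theta,1]^r\}=[m]\setminus\bigcup_i B_i$ has $|K_0|\ge m-r(\lceil\theta m\rceil-1)=n$, and I fix $K\subseteq K_0$ with $|K|=n$. For each $k\in K$ the multiset $e_k=\{x^{(k)}_i:i\ne i_{x^{(k)}}\}$ of the $r-1$ non-maximal coordinates has all entries in $[\theta,1]$ and, since $x^{(k)}\in[\theta,1]^r\cap P$, product $\prod_{y\in e_k}y=\prod_{i\ne i_{x^{(k)}}}x^{(k)}_i\le f_P(\theta)$. (When $\theta=0$ one has only $|K_0|=m<m+r=n$ good tuples; one then takes $K=[m]$ and adjoins $r$ ``dummy'' edges formed from the $(r-1)r$ smallest grid points, whose products are negligible compared with $f_P(0)\ge(1/r)^{r-1}$ once $m$ is large, so they do no harm. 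The case $\theta=0$ is identical to what follows once these dummies are in place.)

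Now list the $(r-1)n$ entries of the multisets $e_k$, $k\in K$, in nondecreasing order as $v_1\le\dots\le v_{(r-1)n}$, and let $Q$ be the $(r-1)$-graph with vertex set $W$ whose edges are the images $\phi(e_k)$ under the rank-respecting map $v\mapsto w_{{\rm rank}(v)}$. Since ${\rm rank}$ is a bijection on the whole multiset, each $\phi(e_k)$ is a genuine $(r-1)$-set and the $\phi(e_k)$ partition $W$, so $Q$ is an $(r-1,\theta,n)$-cover. The one substantive estimate is $w_j\le v_j+1/m$ for every $j$: each value appears at most $r$ times among the $v_\ell$ (once per order), and every $v_\ell$ is a multiple of $1/m$ with $v_\ell\ge\theta$, so $v_j\ge\theta+(\lceil j/r\rceil-1)/m$; on the other hand $n\ge m(1-r\theta)$ forces $(1/r-\theta)/n\le 1/(rm)$, whence $w_j\le\theta+j/(rm)\le v_j+1/m$. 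It follows that for each $k$
\[
\prod_{y\in\phi(e_k)}y\ \le\ \prod_{i\ne i_{x^{(k)}}}\bigl(x^{(k)}_i+\tfrac1m\bigr)\ \le\ \prod_{i\ne i_{x^{(k)}}}x^{(k)}_i+\frac{2^{r-1}-1}{m}\ \le\ f_P(\theta)+\frac{(r-1)2^{r-1}}{m}\,,
\]
using that every factor lies in $[0,1]$, and hence $h(r-1,\theta,n)\le h(Q)\le f(r,\theta,m)+(r-1)2^{r-1}/m$.

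The step needing the most care is the estimate $w_j\le v_j+1/m$ and its passage through the product: discarding the largest coordinate of each tuple deliberately throws away the large relative positions, so one must check that the surviving coordinates remain dense enough to be transported onto the target grid with no coordinate moving up by more than $1/m$ — the ``at most $r$ copies of each value'' bound, together with the precise choice $n=m-r\lceil\theta m\rceil+r$ (and the inequality $n\ge m(1-r\theta)$ it yields), is exactly what makes this work. The remaining points — that the $\phi(e_k)$ form a perfect matching, the passage from $e_k$ being a multiset to $\phi(e_k)$ being a set, and the $\theta=0$ padding — are routine.
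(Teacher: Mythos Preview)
Your approach is correct and genuinely different from the paper's. The paper proceeds by first embedding the preference order as an $r$-cover $Q_1$ on $\{i/rm:i\in[rm]\}$ (shifting the $i$th coordinate down by $(i-1)/rm$ to avoid collisions), and then performs two rounds of explicit edge-swapping: first to ensure every edge meets the top block $B$ exactly once, then to push all edges touching the bottom block $A$ entirely into $A\cup B\cup C$. Only after these normalisations is the top vertex of each surviving edge deleted, producing an $(r-1,k/m,n)$-cover which is finally made similar to an $(r-1,\theta,n)$-cover. Your route bypasses all the swapping: you simply delete the maximal coordinate of each tuple in $K$ and push the resulting multiset onto the target grid by rank. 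The key insight that makes this work --- that each value $a/m$ survives at most $r$ times, forcing $v_j\ge\theta+(\lceil j/r\rceil-1)/m$, while $n\ge m(1-r\theta)$ forces $w_j\le\theta+j/(rm)$ --- is elegant and gives the same $1/m$ shift the paper obtains via Lemma~\ref{lem:similar}. Your argument is shorter and more conceptual; the paper's is more explicit and constructive.

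One point to tighten: your treatment of $\theta=0$ is not quite ``identical to what follows''. With the dummy edges occupying the bottom $(r-1)r$ grid points, the rank map becomes $v_j\mapsto w_{(r-1)r+j}$, and the shift bound is now $w_{(r-1)r+j}-v_j\le(r-1)/(m+r)$ rather than $1/m$. This still yields a product error at most $(r-1)(2^{r-1}-1)/(m+r)<(r-1)2^{r-1}/m$, so the conclusion survives, but you should say so rather than claim identity. Likewise, ``negligible once $m$ is large'' undersells the dummy-edge bound: one can check $((r-1)/(m+r))^{r-1}\le f_P(0)+(r-1)2^{r-1}/m$ for \emph{all} $m\ge1$ (for small $m$ the error term alone exceeds $1$), so the lemma holds as stated without a largeness assumption.
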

\begin{proof}
  Take a preference order $P$ on $[m]$ with
  $f_P(\theta)=f(r,\theta,m)$. Form an $r$-cover $Q_1$ with vertex set
  $\{i/rm\,:\,i\in[rm]\}$ by merging the $r$ orders of~$P$ but reducing the
  values in the $i$th order by $(i-1)/rm$: that is, for each
  $x=(x_1,\ldots,x_r)\in P$, $Q_1$ has the edge
  $e(x)=\{x_1,x_2-1/rm,x_3-2/rm,\ldots,x_r-(r-1)/rm\}$. Observe that $Q_1$
  is indeed an $r$-cover. Let $k=\lceil \theta m\rceil-1$, so
  $k/m<\theta\le (k+1)/m$. Then the condition $x_i\ge \theta$ for $1\le
  i\le r$ is equivalent to $\min\{v: v\in e(x)\}>k/m$.

  We shall transform $Q_1$ but keep the same vertex set. Let
  $A=\{1/rm,\ldots,k/m\}$ be the $rk$ smallest elements of $V(Q_1)$.  For any
  $r$-cover $Q$ with $V(Q)=V(Q_1)$, let $F(Q)=\{e\in E(Q):\,e\cap
  A=\emptyset\}$. So $e(x)\in F(Q_1)$ if and only if $x_i> \theta$ for
  all~$i$.  For $e\in E(Q)$ let $\psi(e)$ be the product of the $(r-1)$
  elements in $e$ except the largest; then $\psi(e(x))\le \prod_{i\ne i_x}
  x_i$ for $e(x)\in E(Q_1)$. So, defining $\Psi(Q)=\max\{\psi(e):e\in
  F(Q)\}$ we have $\Psi(Q_1)\le f_P(\theta)$.

  Let $B=\{1-1/r+1/rm,\ldots,1\}$ be the $m$ vertices greater than
  $1-1/r$. Suppose $e\cap B=\emptyset$ for some edge~$e$. Since $|B|=m=|E(G)|$
  there must be some edge $f$ with $|f\cap B|\ge 2$. Let $u$ be the
  greatest element of $e$ and $v$ be the second greatest in~$f$. Then
  $u\notin B$ and $v\in B$ so $u<v$. Form $Q'$ from $Q$ by replacing $e$
  and $f$ by $e'=(e\setminus\{u\})\cup\{v\}$ and
  $f'=(f\setminus\{v\})\cup\{u\}$. Then $\psi(e')=\psi(e)$ and
  $\psi(f')\le\psi(f)$, since $u<v$.  Note that $e'\in F(Q')$ only if $e\in
  F(Q)$, and  $f'\in F(Q')$ only if $f\in F(Q)$, so
  $\Psi(Q')\le \Psi(Q)$.  This operation increases the number of edges
  meeting $B$, so, by repeating it as necessary, we
  arrive at an $r$-cover $Q_2$ with $\Psi(Q_2)\le f_P(\theta)$,
  and $|e\cap B|=1$ for every edge $e\in E(Q_2)$.

  Let $C=\{1-1/r-(r-2)k/m+1/rm,\ldots,1-1/r\}$ be the $r(r-2)k$ vertices
  immediately below~$B$. We show there is an $r$-cover $Q_3$ with
  $\Psi(Q_3)\le f_P(\theta)$, $|e\cap B|=1$ for every edge $e\in E(Q_3)$,
  $f\cap C=\emptyset$ for $f\in F(Q_3)$, and $f\subset A\cup B\cup C$ for
  every edge $f\notin F(Q_3)$. If either $k=0$ or $r=2$ we can take
  $Q_3=Q_2$, in the first case because $A=C=\emptyset$ so $F(Q_2)=E(Q_2)$,
  and in the second case because $C=\emptyset$, and $|f\cap B|=|f\cap A|=1$
  for $f\notin F(Q_3)$. So we can assume $k>0$ and $r>2$; that is,
  $C\ne\emptyset$. Suppose that $|f\cap C|<r-2$ for some edge
  $f\notin F(Q_2)$. Since $|C|=r(r-2)k>0$ and there are at most $rk$ edges
  not in $F(Q_2)$ (because each contains a vertex of $|A|$), we have
  $e\cap C\ne\emptyset$ for some edge $e\in F(Q_2)$. Now $|f\cap B|=1$;
  pick some $w\in f\cap A$, and then there exists $u\in f$,
  $u\notin B\cup C$ and $u\ne w$. Let $v\in e\cap C$; then $u<v$. Form
  $Q''$ from $Q_2$ by replacing $e$ and $f$ by
  $e''=(e\setminus\{v\})\cup\{u\}$ and
  $f''=(f\setminus\{u\})\cup\{v\}$. Notice $w\in f''\cap A$ so
  $f''\notin F(Q'')$; also $\psi(e'')\le\psi(e)$ and $e\in F(Q_2)$. Thus
  $\Psi(Q'')\le\Psi(Q_2)$, and the edges in $E(Q'')\setminus F(Q'')$
  contain more vertices of $C$ than do those in $E(Q_2)\setminus F(Q_2)$.
  Hence repeating this operation results in an $r$-cover $Q_3$ with
  $\Psi(Q_3)\le f_P(\theta)$, $|e\cap B|=1$ for all $e\in E(Q_3)$ and
  $|f\cap C|=r-2$ for every edge $f\notin F(Q_3)$. Thus $|f\cap A|=1$ for
  all $f\notin F(Q_3)$. But $|C|=r(r-2)k=(r-2)|A|$ so $C$ lies entirely
  within edges not in $F(Q_3)$; in other words, $|e\cap B|=1$ and
  $e\cap C=\emptyset$ for every $e\in F(Q_3)$.

  Let $V(Q_4)=V(Q)-A-B-C=\{k/m+1/rm,\ldots,1-1/r-(r-2)k/m\}$. Let the edges
  of $Q_4$ be the edges of $F(Q_3)$ with the element in $B$ removed. By the
  properties of~$Q_3$, $Q_4$ is an $(r-1)$-cover. Note
  $|E(Q_4)|=|E(Q_3)|-|A|=m-rk=n$; so in fact, $Q_4$ is precisely an
  $(r-1,k/m,n)$-cover, because $V(Q_4)=\{k/m+jx:j\in[(r-1)n]\}$ where
  $x=1/rm=(1/r-k/m)/n$. By definition of $\Psi(Q_3)$ and of $Q_4$ we see that
  $h(Q_4)=\Psi(Q_3)\le f_P(\theta)$. 

  Finally, let $Q_5$ be the $(r-1,\theta,n)$-cover that is similar to
  $Q_4$. Since $k/m<\theta\le k/m+1/m$, Lemma~\ref{lem:similar} shows $h(Q_5)\le
  h(Q_4)+(r-1)2^{r-1}/m\le f_P(\theta)+(r-1)2^{r-1}/m$, and this proves the
  lemma.
\end{proof}

Now we bound $f$ in terms of $h$. The proof seeks to mimic, as far as
possible, the reverse of the previous proof, though the steps
are now much easier.

\begin{lem}\label{lem:flth}
  Let $r\ge2$, $\theta\in[0,1/r)$ and $n\in\mathbb{N}$. Then
  $f(r,\theta,rm)\le h(r-1,\theta,n)+(r-1)2^{r-1}/m$ holds, where
  $m-r\lceil\theta m\rceil+r=n$.
\end{lem}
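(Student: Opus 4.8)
The plan is to reverse the construction of Lemma~\ref{lem:hltf}: rather than extract a cover from a preference order, I would build a good $(r,rm)$-preference order directly from a near-optimal $(r-1,\theta,n)$-cover. Fix an $(r-1,\theta,n)$-cover $Q$ with $h(Q)=h(r-1,\theta,n)$ (the minimum is attained, there being finitely many such covers), and put $k=\lceil\theta m\rceil-1$, so that $n=m-rk$ and $k/m<\theta\le(k+1)/m$. Let $Q'$ be the $(r-1,k/m,n)$-cover similar to $Q$; applying Lemma~\ref{lem:similar} with $r-1$ in place of $r$ gives $h(Q')\le h(Q)+(r-1)2^{r-1}|\theta-k/m|\le h(r-1,\theta,n)+(r-1)2^{r-1}/m$. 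The arithmetic that makes everything fit is that the vertex spacing of $Q'$ is $(1/r-k/m)/n=(m-rk)/(rmn)=1/rm$; hence $V(Q')=\{i/rm:rk<i\le rk+(r-1)n\}$ occupies a block of $(r-1)n$ consecutive positions of the $rm$-point grid $\{i/rm:i\in[rm]\}$. Above this block there remain the $(r-1)rk$ positions $rk+(r-1)n+1,\dots,rm-n$ and the $n$ positions $rm-n+1,\dots,rm$; below it lie the $rk$ positions $1,\dots,rk$.

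Next I would describe $\{x:x\in P\}$ as the union of the $r$ cyclic coordinate-rotations of a single ``basic pattern'' of $m$ tuples, in the spirit of the examples $P_b$ and $P_c$. For each edge $e$ of $Q'$ the pattern has one \emph{good} tuple: coordinates $2,\dots,r$ receive the $r-1$ vertices of $e$, and coordinate~$1$ receives one of the $n$ positions $rm-n+1,\dots,rm$ (a distinct one per edge). Since every such value exceeds $(r-1)/r$ while every vertex of $Q'$ is at most $(r-1)/r$, coordinate~$1$ is strictly largest, so $i_x=1$ and $\prod_{i\ne i_x}x_i=\prod_{y\in e}y\le h(Q')$. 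For each $j\in[rk]$ the pattern also has one \emph{filler} tuple with first coordinate $j/rm\le k/m<\theta$, its remaining $r-1$ coordinates filled — bijectively over all fillers — by the $(r-1)rk$ ``excess'' positions $rk+(r-1)n+1,\dots,rm-n$. A direct count shows the $rm$ entries of the basic pattern (the $rk$ positions $\le rk$, the $(r-1)n$ positions of $V(Q')$, the $(r-1)rk$ excess positions, and the $n$ top positions) form a permutation of $\{1/rm,\dots,1\}$. Since the $r$ cyclic rotations carry each coordinate of the basic pattern into coordinate~$i$ exactly once, for each $i$, each of the $r$ coordinate projections of the union is then a bijection onto $\{1/rm,\dots,1\}$; together with the distinctness of the $rm$ tuples (immediate from the disjointness of the edges of $Q'$ and the separation of the four position-ranges) this certifies that $P$ is a legitimate $(r,rm)$-preference order.

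It remains to read off $f_P(\theta)$. A cyclic rotation merely permutes the entries of a tuple, so it preserves both $\prod_{i\ne i_x}x_i$ and the property of possessing a coordinate $<\theta$. Each filler tuple, and each rotation of one, has a coordinate $\le k/m<\theta$, hence lies outside $[\theta,1]^r$ and is irrelevant to $f_P(\theta)$; each good tuple, and each rotation of one, has $\prod_{i\ne i_x}x_i=\prod_{y\in e}y\le h(Q')$. Therefore $f(r,\theta,rm)\le f_P(\theta)\le h(Q')\le h(r-1,\theta,n)+(r-1)2^{r-1}/m$, as required.

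The construction is essentially forced once the alignment $(1/r-k/m)/n=1/rm$ is noticed, so — as the remark before the statement promises — the steps are easier than in Lemma~\ref{lem:hltf}. The one point needing care is the counting that certifies the $rm$ entries of the basic pattern to be exactly the $rm$ grid positions, each once; this is where the extra factor $r$ in the ground set (against the $[m]$ of Lemma~\ref{lem:hltf}) is spent, and it is what lets the filler tuples absorb the excess positions harmlessly, each filler being already neutralised by its sub-$\theta$ coordinate. Equivalently, and closer to the literal reverse of Lemma~\ref{lem:hltf}, one may form an $r$-cover on the $rm$-grid by adjoining a top vertex to each edge of $Q'$ and forming ``bad'' edges from the leftover positions, then un-merge it into $r$ orders; the rotation description is just a tidy packaging of this.
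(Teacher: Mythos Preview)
Your proof is correct and follows essentially the same approach as the paper: take an optimal $(r-1,\theta,n)$-cover, shift it to a similar $(r-1,k/m,n)$-cover sitting on the grid $\{i/rm\}$, extend each edge by one ``top'' vertex, absorb the leftover grid points into filler tuples that each carry a coordinate below $\theta$, and then obtain an $(r,rm)$-preference order by taking all $r$ cyclic rotations. The only cosmetic difference is that the paper lets each filler edge use one vertex from the top block $B$ (so $|B|=m$) whereas you reserve the top $n$ positions solely for the good tuples; both allocations work, and your closing remark already identifies the equivalence.
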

\begin{proof}
  Take an $(r-1,\theta,n)$-cover $Q$ with $h(Q)=h(r-1,\theta,n)$. Choose
  $m$ with $n=m-r\lceil\theta m\rceil+r$; such a choice is possible because
  the right hand side increases by at most one as $m$ increases by one. Let
  $Q_1$ be the $(r-1,k/m,n)$-cover that is similar to~$Q$, where
  $k=\lceil \theta m\rceil-1$. By Lemma~\ref{lem:similar}, $h(Q_1)\le
  h(Q)+(r-1)2^{r-1}/m$.

  Now form an $r$-cover $Q_2$ with $V(Q_2)=\{1/rm,\ldots,1\}=V(Q_1)\cup
  A\cup B\cup C$, where $A=\{1/rm,\ldots,k/m\}$,
  $B=\{1-1/r+1/rm,\ldots,1\}$ and $C=\{1-1/r-(r-2)k/m+1/rm,\ldots,1-1/r\}$.
  For each edge $e$ of $Q_1$ let $e\cup\{v\}$ be an edge of $Q_2$, for some
  $v\in B$, and then add $m-n=rk$ further edges each comprising one vertex
  in $A$, one in $B$ and $r-2$ in~$C$. Observe that it is possible to form
  an $r$-cover in this way, because $|V(Q_2)|=rm$, $E(Q_1)=n$, $|A|=rk$,
  $|B|=m$ and $|C|=r(r-2)k$.

  Finally, we form an $(r,rm)$-preference order $P$ from $Q_2$. For each
  edge $f=\{v_1,\ldots,v_r\}\in E(Q_2)$, where $v_1<\ldots<v_r$, let each of
  the $r$-tuples $y_f^1,y_f^2,\ldots,y_f^r$ belong to~$P$, where
  $y_f^i=(v_{1+i},v_{2+i},\ldots,v_{r+i})$, subscripts being evaluated
  modulo~$r$. Note that for each $\ell\in[rm]$ and $i\in[r]$ there is a unique
  $x=(x_1,\ldots,x_r)\in P$ with $x_i=\ell/rm$, and $P$ is indeed an
  $(r,rm)$-preference order. Let $x\in P$ satisfy
  $\prod_{i\ne i_x}x_i=f_P(\theta)$. Then $x=y_f^i$ for some $f\in
  E(Q_2)$. Now $x_i\ge\theta$ for $1\le i\le r$, so $u\ge\theta> k/m$ for all
  $u\in f$. Hence $f\cap A=\emptyset$, so $f=e\cup\{v\}$ for some $e\in E(Q_1)$
  and some $v\in B$. Since $f\cap B=\{v\}$ we have $f(r,\theta,rm)\le
  f_P(\theta)=\prod_{i\ne i_x}x_i=\prod_{z\in f, z\ne v}z=\prod_{z\in e}z\le
  h(Q_1)\le h(Q)+(r-1)2^{r-1}/m$, proving
  the lemma.
\end{proof}

When proving Theorem~\ref{thm:h=f}, we need consider only large $m$ and $n$.

\begin{lem}\label{lem:mk}
  For $r\ge2$, $0\le\theta< 1/r$ and $m,n, k\in\mathbb{N}$,
  $f(r,\theta,km)\le f(r,\theta,m)$ and $h(r-1,\theta,kn)\le h(r-1,\theta,n)$
  hold. In particular, $f(r,\theta)=\liminf_{m\to\infty} f(r,\theta,m)$ and
  $h(r-1,\theta)=\liminf_{n\to\infty} h(r-1,\theta,n)$.
\end{lem}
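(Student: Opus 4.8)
The plan is to prove each of the two monotonicity inequalities by a ``blow-up'' construction, in which every element of the ground set is replaced by a block of $k$ consecutive copies, and then to deduce the statement about $\liminf$ by an elementary comparison with Definitions~\ref{defn:fg} and~\ref{defn:cover}.

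First I would treat $f(r,\theta,km)\le f(r,\theta,m)$. Fix an $(r,m)$-preference order $P=(<_1,\ldots,<_r)$ with $f_P(\theta)=f(r,\theta,m)$, and take as new ground set $[m]\times[k]$ (identified with $[km]$). Define $<_i'$ to order the pairs $(j,t)$ first by $j$ according to $<_i$, and then by $t$ in the natural order. A short count of the pairs not exceeding $(j,t)$ gives ${\rm rpos}_{<_i'}\bigl((j,t)\bigr)={\rm rpos}_{<_i}(j)-(k-t)/(km)$; in particular the top copy $(j,k)$ satisfies ${\rm rpos}_{<_i'}\bigl((j,k)\bigr)={\rm rpos}_{<_i}(j)$ for every~$i$, while every other copy has a strictly smaller relative position in \emph{every} coordinate. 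Hence if $y\in P'=(<_1',\ldots,<_r')$ is the relative-position tuple of the element $(j,t)$, and $x\in P$ is the relative-position tuple of~$j$, then $y_i=x_i-c$ for all~$i$, where $c=(k-t)/(km)\ge 0$. The crucial point is that subtracting a common constant from all coordinates does not change which coordinate is largest, so $i_y=i_x$; moreover $y\in[\theta,1]^r$ forces $x\in[\theta,1]^r$, since $x_i\ge y_i\ge\theta$. Therefore $\prod_{i\ne i_y}y_i=\prod_{i\ne i_x}y_i\le\prod_{i\ne i_x}x_i\le f_P(\theta)$, whence $f_{P'}(\theta)\le f_P(\theta)$, and since $P'$ is an $(r,km)$-preference order we get $f(r,\theta,km)\le f_{P'}(\theta)\le f(r,\theta,m)$.

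Next, $h(r-1,\theta,kn)\le h(r-1,\theta,n)$ is handled in the same spirit, but more easily. Take an $(r-1,\theta,n)$-cover $Q$ with $h(Q)=h(r-1,\theta,n)$; its vertex set is $\{\theta+(1/r-\theta)j/n:j\in[(r-1)n]\}$. Pass to the refined set $\{\theta+(1/r-\theta)j/(kn):j\in[(r-1)kn]\}$, in which the original vertex of index~$j$ is the vertex of index~$kj$. For each edge $e$ of $Q$ and each $t\in[k]$, form an edge $e^{(t)}$ by moving every vertex of~$e$ from its index~$j$ to index $k(j-1)+t$; this decreases each coordinate by the common amount $(1/r-\theta)(k-t)/(kn)\ge 0$, leaving it positive, and since $Q$ is a perfect matching the family $\{e^{(t)}:e\in E(Q),\,t\in[k]\}$ consists of $kn$ pairwise disjoint $(r-1)$-sets exhausting the refined vertex set. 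Thus it is an $(r-1,\theta,kn)$-cover $Q'$, and $h(Q')\le h(Q)$, giving the claim.

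Finally, for the ``in particular'' clause: by Definition~\ref{defn:fg}, $f(r,\theta)=\inf_m f(r,\theta,m)\le\liminf_{m\to\infty}f(r,\theta,m)$. For the reverse inequality, fix any $m_0$; along the subsequence $m=km_0$ the first part gives $f(r,\theta,km_0)\le f(r,\theta,m_0)$, so $\liminf_{m\to\infty}f(r,\theta,m)\le\liminf_{k\to\infty}f(r,\theta,km_0)\le f(r,\theta,m_0)$, and taking the infimum over $m_0$ gives $\liminf_{m\to\infty}f(r,\theta,m)\le f(r,\theta)$. The corresponding identity for~$h$ follows identically from Definition~\ref{defn:cover}. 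I do not anticipate a genuine obstacle: the one point needing care is the bookkeeping with relative positions and vertex indices, so that $P'$ is \emph{exactly} an $(r,km)$-preference order and $Q'$ is \emph{exactly} an $(r-1,\theta,kn)$-cover, together with the observation that a common additive shift of all coordinates preserves the index $i_x$ of a largest coordinate, which is what makes the product bound come out cleanly.
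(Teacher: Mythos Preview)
Your proposal is correct and follows essentially the same blow-up strategy as the paper: replace each ground-set element (respectively, each vertex index) by a block of $k$ consecutive copies, observe that every new tuple/edge is obtained from an old one by subtracting a common nonnegative constant from all coordinates, and use this to bound the relevant product. Your write-up is in fact a bit more careful than the paper's sketch---you make explicit the key point that a common additive shift preserves $i_x$, and your index map $j\mapsto k(j-1)+t$ for the cover gives the correct spacing $(1/r-\theta)/(kn)$ for general~$\theta$, whereas the paper's stated shift $1/(rkn)$ tacitly assumes $\theta=0$; you also spell out the $\liminf$ argument that the paper leaves implicit.
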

\begin{proof}
  Take an $(r,m)$-preference order $P$ with
  $f_P(\theta)=f(r,\theta,m)$. Produce an $(r,km)$-preference order $P'$ in
  the following natural way: if $j$ is the number at relative position $x$
  in the $i$th order of~$P$, then place $j,j+m,j+2m,\ldots,j+(k-1)m$ at
  relative positions $x,x-1/km,x-2/km,\ldots,x-(k-1)/km$ in the $i$th order
  of~$P'$.  Then if $x'\in[\theta,1]^r$ and $x'\in P'$, there exists
  $x\in[\theta,1]^r$ with $x\in P$ and $\prod_{i\ne
    i_{x'}}x'_i\le\prod_{i\ne i_x}x_i$, and so $f(r,\theta,mk)\le
  f_{P'}(\theta)\le f_P(\theta)=f(r,\theta,m)$.

  In a similar manner, if $Q$ is an $(r-1,\theta,n)$-cover with
  $h(Q)=h(r-1,\theta,n)$, then we form an $(r-1,\theta,kn)$-cover $Q'$ as
  follows. Note that, by definition, $V(Q)\subset V(Q')$. For each
  $e\in E(Q)$ place the edges $e,e-1/rkn,\ldots, e-(k-1)/rkn$ into $E(Q')$,
  where $e-y=\{x-y:x\in e\}$. It is easy to see that $Q'$ is an
  $(r-1,\theta,kn)$-cover and $h(Q')=h(Q)$.
\end{proof}

\begin{proof}[Proof of Theorem~\ref{thm:h=f}]
  Let $\theta\in[0,1/r)$. By Lemma~\ref{lem:mk} there is a sequence
  $(m_j)_{j=1}^\infty$ with $m_j\to\infty$ and $f(r,\theta,m_j)\to
  f(r,\theta)$. Let $n_j=m_j-r\lfloor \theta m_j\rfloor$. By
  Lemma~\ref{lem:hltf}, $h(r-1,\theta)\le h(r-1,\theta,n_j)\le
  f(r,\theta,m_j)+2^{r-1}/m_j$ holds for all~$j$, and taking the limit as
  $j\to\infty$ gives $h(r-1,\theta)\le f(r,\theta)$. A corresponding
  argument, but using Lemma~\ref{lem:flth}, shows that $f(r,\theta)\le
  h(r-1,\theta)$, so $f(r,\theta)= h(r-1,\theta)$ for $\theta<1/r$. When
  $\theta=1/r$, we have $h(r-1,1/r)=\lim_{\theta\to
    (1/r)^-}h(r-1,\theta)=(1/r)^{r-1}$ by definition. Thus, using the
  result for $\theta<1/r$, we have $\lim_{\theta\to
    (1/r)^-}f(r,\theta)=(1/r)^{r-1}$. But we know (see after
  Definition~\ref{defn:fg}) that $f(r,\theta)$ is decreasing and
  $f(r,1/r)\ge (1/r)^{r-1}$. Therefore $f(r,1/r)=(1/r)^{r-1}=h(r-1,1/r)$,
  completing the proof.
\end{proof}

\subsection{Further properties}\label{subsec:further}

We now establish some basic properties of the functions $f(r,\theta)$ and
$f(r,\theta,m)$, namely continuity, rate of convergence and initial
constancy. In the light of Theorem~\ref{thm:h=f} and Lemmas~\ref{lem:hltf}
and~\ref{lem:flth} we could derive these from corresponding properties of
$h(r-1,\theta)$ and $h(r-1,\theta,n)$, and generally we do so since it
is usually easier to argue in terms of covers than preference
orders.

\begin{lem}\label{lem:cont}
  For $r\ge1$ and $\theta,\theta'\in[0,1/(r+1))$,
  $|h(r,\theta)-h(r,\theta')|\le r2^r|\theta-\theta'|$. In
  particular, $h(r,\theta)$ is continuous for $\theta\in[0,1/(r+1)]$.
\end{lem}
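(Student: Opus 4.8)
The plan is to establish the Lipschitz bound $|h(r,\theta)-h(r,\theta')|\le r2^r|\theta-\theta'|$ directly from the definition of $h(r,\theta)$ as an infimum over covers, leaning entirely on Lemma~\ref{lem:similar}, which already gives the same Lipschitz constant at the level of a single pair of similar covers. The continuity statement for $\theta\in[0,1/(r+1)]$ then follows immediately, since a Lipschitz function on the half-open interval $[0,1/(r+1))$ extends continuously to the closed interval, and the value $h(r,1/(r+1))=1/(r+1)^r$ was \emph{defined} to be the limit $\lim_{\theta\to(1/(r+1))^-}h(r,\theta)$, so no separate work is needed at the endpoint.

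First I would fix $\theta,\theta'\in[0,1/(r+1))$ and assume without loss of generality $\theta<\theta'$, writing $\delta=\theta'-\theta$. For each $n\in\mathbb{N}$, pick an $(r,\theta,n)$-cover $Q$ with $h(Q)=h(r,\theta,n)$, and let $Q'$ be the unique $(r,\theta',n)$-cover similar to $Q$ (the similarity map being the bijection $\theta+(1/(r+1)-\theta)j/n\mapsto\theta'+(1/(r+1)-\theta')j/n$, which by definition of similarity carries edges to edges). By Lemma~\ref{lem:similar}, $h(Q')\le h(Q)+r2^r\delta$, and since $h(r,\theta',n)\le h(Q')$ we get $h(r,\theta',n)\le h(r,\theta,n)+r2^r\delta$. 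Symmetrically — starting instead from an optimal $(r,\theta',n)$-cover and passing to its similar $(r,\theta,n)$-cover — we get $h(r,\theta,n)\le h(r,\theta',n)+r2^r\delta$. Hence $|h(r,\theta,n)-h(r,\theta',n)|\le r2^r\delta$ for every $n$.

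Taking the infimum over $n$ on both sides then yields $|h(r,\theta)-h(r,\theta')|\le r2^r\delta = r2^r|\theta-\theta'|$: indeed, for any $\epsilon>0$ choose $n$ with $h(r,\theta,n)<h(r,\theta)+\epsilon$, so $h(r,\theta')\le h(r,\theta',n)\le h(r,\theta,n)+r2^r\delta< h(r,\theta)+r2^r\delta+\epsilon$, and letting $\epsilon\to 0$ gives one direction; the other is symmetric. I do not anticipate any genuine obstacle here — the only mild point to be careful about is the direction of the inequality when passing from similar covers (Lemma~\ref{lem:similar} is stated with the larger constant $r2^r$ accounting for both directions of the perturbation, so it applies regardless of which of $\theta,\theta'$ is larger), and the routine $\epsilon$-chasing needed to move a Lipschitz bound through an infimum. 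Finally, for continuity at $\theta=1/(r+1)$: the Lipschitz estimate shows $h(r,\cdot)$ is uniformly continuous on $[0,1/(r+1))$, hence has a limit as $\theta\to(1/(r+1))^-$, and this limit is exactly the value $h(r,1/(r+1))=1/(r+1)^r$ assigned in Definition~\ref{defn:cover}, so $h(r,\theta)$ is continuous on all of $[0,1/(r+1)]$.
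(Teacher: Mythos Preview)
Your proposal is correct and follows essentially the same approach as the paper: pick a near-optimal cover at one value of $\theta$, pass to the similar cover at $\theta'$, invoke Lemma~\ref{lem:similar}, and push the bound through the infimum by an $\epsilon$-argument, then handle the endpoint via the definition of $h(r,1/(r+1))$. The only cosmetic difference is that you first record the per-$n$ inequality $|h(r,\theta,n)-h(r,\theta',n)|\le r2^r\delta$ before taking the infimum, whereas the paper works directly with the infimum; the content is identical.
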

\begin{proof}
  Let $\epsilon>0$. Choose $n$ so that $h(r,\theta,n)<h(r,\theta)+\epsilon$
  and let $Q$ be an $(r,\theta,n)$-cover with $h(Q)=h(r,\theta,n)$. Let
  $Q'$ be the similar $(r,\theta',n)$-cover. By Lemma~\ref{lem:similar},
  $h(r,\theta')\le h(Q')\le h(Q)+r2^r|\theta-\theta'| \le
  h(r,\theta)+r2^r|\theta-\theta'|+\epsilon$. So
  $h(r,\theta)-h(r,\theta')\le r2^r|\theta-\theta'|+\epsilon$, and since
  this holds for all $\epsilon>0$ we have $h(r,\theta)-h(r,\theta')\le
  r2^r|\theta-\theta'|$, The same holds with $\theta$ and $\theta'$
  interchanged, establishing the first half of the lemma, and hence also
  the continuity of $h(r,\theta)$ for $\theta\in[0,1/(r+1))$. But
  $h(r,\theta)$ is continuous at $\theta=1/(r+1)$ by definition of
  $h(r,1/(r+1))=\lim_{\theta\to (1/(r+1))^-}h(r,\theta)$.
\end{proof}

The next lemma bounds how fast $f(r,\theta,m)$ converges to $f(r,\theta)$.
Though we could derive this from a corresponding result for
$h(r-1,\theta,n)$, we need only the bound on $f(r,\theta,m)$, and it is
slightly quicker to prove this directly. The idea of the proof is
straightforward: we choose a large preference order $P'$ with
$f_{P'}(\theta)$ close to $f(r,\theta)$, and from some randomly chosen
elements $y\in P'$ we build an $(r,m)$-preference order $P$ with
$f_P(\theta)$ close to $f(r,\theta)$.

\begin{lem}\label{lem:fdecr}
  For $r\ge2$, $0\le\theta\le 1/r$ and $m\in\mathbb{N}$, $f(r,\theta)\le
  f(r,\theta,m)\le f(r,\theta) + 2^r\sqrt{(\log r m)/m}$ holds.
  In particular $f(r,\theta)=\lim_{m\to\infty}f(r,\theta,m)$.
\end{lem}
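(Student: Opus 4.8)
The first inequality, $f(r,\theta)\le f(r,\theta,m)$, is immediate from the definition of $f(r,\theta)$ as the infimum of $f(r,\theta,m)$ over $m\in\mathbb N$, and the limit assertion follows by squeezing once the upper bound is established. We may also assume $m$ is large, since otherwise $2^r\sqrt{(\log rm)/m}\ge 1\ge f(r,\theta,m)$ (note $f_P(\theta)\le1$ always) and there is nothing to prove.

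For the upper bound the plan is the one indicated in the paragraph preceding the lemma: build a small preference order by sub-sampling a large near-optimal one. Fix a small $\epsilon>0$ and a parameter $\eta$, a suitable fixed multiple of $\sqrt{(\log rm)/m}$, and put $\theta'=\max\{\theta-\eta,0\}$. Using Lemma~\ref{lem:mk} (to make $M$ as large as we wish, e.g.\ a large multiple of a given value) together with the definition of $f(r,\theta')$, choose an $(r,M)$-preference order $P'$ with $f_{P'}(\theta')=f(r,\theta',M)<f(r,\theta')+\epsilon$. Form a random $(r,m)$-preference order $P$ from $P'$ by choosing a uniformly random $m$-element set $S\subseteq[M]$ and letting the $i$th order of $P$ be the restriction of the $i$th order of $P'$ to $S$ (with $S$ identified with $[m]$). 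For $k\in S$ write $x_i$ for the relative position of $k$ in the $i$th order of $P$ and $p_i$ for its relative position in the $i$th order of $P'$. The key estimate is that, with positive probability, $x_i\le p_i+\eta$ for every $k\in S$ and every $i$: the number of elements of $S$ at or below $k$ in the $i$th order of $P'$ is hypergeometrically distributed with mean within $1$ of $mp_i$, hence a sum of independent Bernoulli variables (Remark~\ref{rem:chernoff}), so Proposition~\ref{prop:chernoff} applies, and a union bound over the at most $rm$ pairs $(k,i)$ with $k\in S$ leaves positive probability that all these inequalities hold, provided the constant in $\eta$ is large enough. Fix such an outcome and let $x=(x_1,\dots,x_r)\in[\theta,1]^r\cap P$ attain $f_P(\theta)$, corresponding to $k\in S$ with $P'$-position $p=(p_1,\dots,p_r)$. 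Then $p_i\ge x_i-\eta\ge\theta-\eta$, so $p\in[\theta',1]^r\cap P'$ and hence $\prod_{i\ne i_p}p_i\le f_{P'}(\theta')$. Since $i_x$ indexes a largest coordinate of $x$, replacing $i_x$ by $i_p$ does not decrease the product, so
\[
\textstyle\prod_{i\ne i_x}x_i\le\prod_{i\ne i_p}x_i\le\prod_{i\ne i_p}(p_i+\eta)\le\prod_{i\ne i_p}p_i+(r-1)\eta\le f_{P'}(\theta')+(r-1)\eta,
\]
the middle step because each $p_i\le1$ and there are $r-1$ factors. Letting $\epsilon\to0$ gives $f(r,\theta,m)\le f_P(\theta)\le f(r,\theta')+(r-1)\eta$, and the Lipschitz continuity of $f(r,\cdot)$ — which follows from Lemma~\ref{lem:cont} via the identity $f(r,\cdot)=h(r-1,\cdot)$ of Theorem~\ref{thm:h=f} — bounds $f(r,\theta')-f(r,\theta)$ in terms of $\eta$; with the constant in $\eta$ chosen appropriately the total error is at most $2^r\sqrt{(\log rm)/m}$.

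The concentration step and the elementary product estimate are routine. The part requiring care is the error accounting for $\theta>0$: the random restriction can push a relative position from $\ge\theta$ down below $\theta$, which is why one samples against the shifted threshold $\theta'=\theta-\eta$ and then has to pay a continuity cost to return to $\theta$ — this is the reason Lemma~\ref{lem:cont} and Theorem~\ref{thm:h=f} are invoked. (When $\theta=0$ this subtlety evaporates entirely: no shift is needed, the only error is the perturbation term $(r-1)\eta$, and the bound holds with room to spare.) The main obstacle, then, is bundling the three error contributions — the perturbation term, the continuity term, and the vanishing $\epsilon$ — tightly enough, together with the choice of the constant hidden in $\eta$, so that everything fits under $2^r\sqrt{(\log rm)/m}$.
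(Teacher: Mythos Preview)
Your random sub-sampling idea is the same as the paper's, and the concentration step via Proposition~\ref{prop:chernoff} (applied to the complement count, which is hypergeometric) is fine. The substantive difference is in how the case $\theta>0$ is handled, and this is where your argument falls short of the stated constant~$2^r$.

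You deal with $\theta>0$ by shifting to $\theta'=\theta-\eta$ and paying a Lipschitz cost $|f(r,\theta')-f(r,\theta)|$ at the end, invoking Lemma~\ref{lem:cont} and Theorem~\ref{thm:h=f}. That Lipschitz constant is $(r-1)2^{r-1}$, and the union bound forces $\eta\ge\sqrt{2}\,\sqrt{(\log rm)/m}$; so the continuity term alone contributes at least $(r-1)2^{r-1}\sqrt{2}\,\sqrt{(\log rm)/m}$, which already exceeds $2^r\sqrt{(\log rm)/m}$ for every $r\ge3$ (for $r=3$ it is $8\sqrt2>8$). Adding the perturbation term $(r-1)\eta$ only makes things worse. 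So your approach yields $f(r,\theta,m)\le f(r,\theta)+C_r\sqrt{(\log rm)/m}$ with $C_r$ of order $r\,2^{r-1}$, which is perfectly adequate for the applications in the paper (Theorem~\ref{thm:ub} only needs $f(r,\beta,m)\le f(r,\beta)(1+\delta)$), but does not prove the lemma as stated.

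The paper avoids the continuity detour entirely. Instead of sampling uniformly from $[M]$ and shifting the threshold, it first sets aside the bottom $k=\lceil\theta m\rceil-1$ positions and the top $(r-1)k$ positions in each order of the target $(r,m)$-preference order~$P$, and fills these with $rk$ ``dummy'' tuples each having exactly one coordinate below $\theta$ (so they never contribute to $f_P(\theta)$). The remaining $q=m-rk$ middle positions are then filled by sampling $q$ tuples from $S=\{y\in P':y\in[\theta,1]^r\}$ and transplanting their relative orders. Because the middle block already lies in $[\theta,1]$ in every order, no threshold shift is needed, and the \emph{only} error is the perturbation term, which the paper bounds by $2^r\sqrt{(\log rm)/m}$ exactly as you do. If you want the stated constant, you need this structural device (or something equivalent) rather than the shift-and-continuity route.
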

\begin{proof}
  The lower bound holds by Definition~\ref{defn:fg}. For the upper bound,
  let $\epsilon > 0$ and choose $N$ with $f(r,\theta,N)\le
  f(r,\theta)+\epsilon$. By Lemma~\ref{lem:mk} we may assume that $N$ is as
  large as we wish, certainly larger than~$m$. Let $P'=(<'_1,\ldots,<'_r)$
  be an $(r,N)$-preference order with $f_{P'}(\theta)=f(r,\theta,N)$. Let
  $S=\{y\in P': y\in[\theta,1]^r\}$. By definition,
  $f_{P'}(\theta)=\max\{\prod_{i\ne i_y}y_i: y\in S\}$.  Since, for
  each~$i$, fewer than $\theta N$ elements $y\in P'$ satisfy $y_i<\theta$,
  we have $|S|> (1-r\theta)N$.

  We now construct an $(r,m)$-preference order~$P=(<_1,\ldots,<_r)$. More
  precisely, we specify only $\{x:x\in P\}$, but this is enough to
  determine $f_P(\theta)$. Put $k=\lceil \theta m\rceil -1$, so
  $k/m<\theta\le (k+1)/m$. Let $q=m-rk$, so $q>m(1-r\theta)$. Partition the
  relative positions into three sets $A=\{1/m,\ldots,k/m\}$,
  $Q=\{(k+1)/m,\ldots,1-(r-1)k/m\}$ and $B=\{1-(r-1)k/m+1/m,\ldots,1\}$, so
  $|A|=k$, $|B|=(r-1)k$ and $|Q|=q$. By definition, $f_P(\theta)=\max\{
  \prod_{i\ne i_x}x_i:x\in P, x\in (Q\cup B)^r\}$.

  Begin by placing $rk$ $r$-tuples $x$ into $P$, so that each $x\in (A\cup
  B)^r$, and for each $x$ there is a unique index~$j$ with $x_j\in A$ and
  $x_i\in B$ for $i\ne j$. It is possible to find such $r$-tuples because
  $|B|=(r-1)|A|$. We finish the construction of $P$ by adding to $P$ a
  further set $R$ of $q$ $r$-tuples (to be described), so that if $x\in R$
  then $x\in Q^r$. Observe that, when this is done,
  $f_P(\theta)=\max\{\prod_{i\ne i_x}x_i: x\in R\}$ holds.

  Note at this point that we may assume that $2^r\sqrt{(\log r m)/m}<1$ and
  in particular $m\ge 4^r$, since otherwise the lemma is trivial because
  $f(r,\theta,m)\le 1$. A further simple observation is that, whatever the
  choice of~$R$,
  $f(r,\theta,m)\le f_P(\theta) =\max\{\prod_{i\ne i_x}x_i: x\in R\}\le
  (1/r+q/m)^{r-1}\le (1/r)^{r-1}+2^{r-1}q/m\le f(r,\theta)+2^{r-1}q/m$ by
  Theorem~\ref{thm:elem}. If, say, $q\le2r$, then using $m\ge 4^r$ we have
  $q/m\le 2r/m < 2/\sqrt m < 2\sqrt{(\log r m)/m}$, and the lemma holds.
  So we may assume that 
  $2r\le q=m-rk\le m -r\theta m + r$, and hence  $1-r\theta\ge r/m$. Since
  $N$ is 
  large we may therefore assume that $|S|>(1-r\theta)N\ge rN/m\ge m\ge q$.

  To find $R$, we turn to the large preference order $P'$, and choose a
  random subset $R'\subset S$ of size~$q$ (we know $|S|>q$). We then take
  $R$ to be the $q$ elements of $Q^r$ whose relative orders are the same as
  those of~$R'$.  Formally, define an injection $\iota:R'\to Q^r$ so that
  if $y\in R'$ and $x=\iota(y)$ then $x_i=(k+j)/m$, where 
  $j=|\{y'\in R': y_i'\le y_i\}|$. Then take $R=\iota(R')$. This completes
  the construction of~$P$.  What remains is to show there is a choice of
  $R'$ such that $f_P(\theta)$ is suitably bounded.

  We say $y\in S$ {\em spoils} $<_i$ if $y\in R'$ and $x_i>y_i +
  (r+\sqrt{2q\log rq})/m$, where $x=\iota(y)\in R$. What is the
  probability that $y$ spoils~$<_i$?  Conditioned on the event $y\in R'$,
  the remaining $q-1$ elements of $R'$ are chosen randomly from
  $S-\{y\}$. Let $X$ be the subset of these taken from the subset $Y\subset
  S$ of elements whose $i$'th co-ordinate exceeds $y_i$: that is,
  $Y=\{y'\in S: y_i'> y_i\}$ and $X=R'\cap Y$. Then $x_i=(k+q-|X|)/m$. Now
  $|X|$ is distributed hypergeometrically with parameters $|S|-1, |Y|,
  q-1$, with mean $\lambda=(q-1)|Y|/(|S|-1)$. Note that, by definition
  of~$S$, there are at most $y_iN-\theta N$ elements of~$S$ not in~$Y$, so
  $|Y|\ge|S|-y_iN+\theta N$.

  Since $x_i=(k+q-|X|)/m$, we have $x_i=(m-(r-1)k-|X|)/m\le
  1-(r-1)\theta+(r-1)/m-|X|/m$. Now $\lambda>(q-1)|Y|/|S|>q|Y|/|S|-1$,
  because $|Y|<|S|$; thus $|Y|<|S|(\lambda+1)/q$. So the inequality
  $|Y|\ge|S|-y_iN+\theta N$ means $y_i\ge |S|/N-|Y|/N+\theta >
  (|S|/N)(1-(\lambda+1)/q) + \theta$. Using $|S|>(1-r\theta)N$ and
  $q>(1-r\theta) m$ this gives $y_i>1-(r-1)\theta-(\lambda+1)/m$.
  Therefore $x_i-y_i<(r-|X|+\lambda)/m$.

  If $y$ spoils $<_i$ then $x_i-y_i>(r+\sqrt{2q\log rq})/m$, and so
  $|X|<\lambda-\sqrt{2q\log rq}$. By Proposition~\ref{prop:chernoff}, the
  probability of this is at most $e^{-(2q\log rq)/2\lambda}<e^{-\log
    rq}=1/rq$.  We say $y$ {\em spoils} $P$ if $y$ spoils $<_i$ for
  some~$i\in[r]$. Thus, conditional on $y\in R'$, the probability that $y$
  spoils $P$ is less than $1/q$. The unconditional probability that $y\in
  R'$ is $q/|S|$, and so the expected number of elements $y\in S$ spoiling
  $P$ is less than $|S|(q/|S|)(1/q)=1$.

  Hence there is some choice of $R'$ for which no element spoils~$P$, and
  $x_i-y_i\le(r+\sqrt{2q\log rq})/m$ for every $y\in R'$ and
  every~$i\in[r]$. We have
  $(r+\sqrt{2q\log rq})/m<r/m + \sqrt{(2/m)\log rm} < 2\sqrt{(\log rm)/m}$
  because $m\ge 4^r$. There is some $x=\iota(y)\in R$ with
  $f_P(\theta)=\prod_{i\ne i_x}x_i$, and so
  $f(r,\theta,m)\le f_P(\theta) =\prod_{i\ne i_x}x_i\le \prod_{i\ne
    i_y}x_i\le \prod_{i\ne i_y}(y_i+ 2\sqrt{(\log rm)/m}) \le \prod_{i\ne
    i_y}y_i+ 2^r\sqrt{(\log rm)/m}$.
  Since $y\in S$, so
  $\prod_{i\ne i_y}y_i\le f_{P'}(\theta)\le f(r,\theta)+\epsilon$, we have
  $f(r,\theta,m)\le f(r,\theta)+\epsilon+ 2^r\sqrt{(\log r m)/m}$. The
  bound holds for every $\epsilon>0$, and so the lemma is proved.
\end{proof}

We now explain why the function $f(r,\theta)$ is constant for small
$\theta$.

\begin{defn}\label{defn:varphi}
For each $r\ge 1$, let $\varphi_r$ be the smallest solution to the equation
$\theta(1-1/(r+1)-(r-1)\theta)^{r-1}=h(r,\theta)=f(r+1,\theta)$.
\end{defn}

Note that there is a solution to this equation, because
$h(r,1/(r+1))=(1/(r+1))^r$, and $h(r,\theta)=f(r+1,\theta)$ by
Theorem~\ref{thm:h=f}. Moreover $h(r,\theta)\ge h(r,1/(r+1))>0$ for all
$\theta\in[0,1/(r+1)]$, so $0<\varphi_r\le 1/(r+1)$.

\begin{thm}\label{thm:varphi}
  For each $r\ge1$, $h(r,\theta)=f(r+1,\theta)$ is constant for
  $\theta\in[0,\varphi_r]$.
\end{thm}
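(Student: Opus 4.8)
Since $h(r,\theta)=f(r+1,\theta)$ by Theorem~\ref{thm:h=f}, the plan is to prove that $h(r,\cdot)$ is constant on $[0,\varphi_r]$. Write $\eta=1/(r+1)$ and $\phi(s)=s\bigl(1-\eta-(r-1)s\bigr)^{r-1}$; since $M(s):=1-\eta-(r-1)s$ is precisely the largest vertex of an $(r,s,n)$-cover, we have $\phi(s)=sM(s)^{r-1}$, and $\varphi_r$ is the least $\theta$ with $\phi(\theta)=h(r,\theta)$. As $h(r,\cdot)$ is non-increasing, it is enough to show $h(r,\theta)=h(r,\theta')$ whenever $0\le\theta<\theta'<\varphi_r$: that gives constancy on $[0,\varphi_r)$, and constancy on the closed interval then follows from continuity (Lemma~\ref{lem:cont}). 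I may also restrict to rational $\theta,\theta'$, the rationals being dense in $[0,\varphi_r]$. The crux will be the inequality
\begin{equation}
  h(r,\theta)\ \le\ \max\Bigl\{\,h(r,\theta'),\ \max_{s\in[\theta,\theta']}\phi(s)\,\Bigr\}.\label{eqn:vphi}
\end{equation}

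To prove (\ref{eqn:vphi}) I would extend a near-optimal cover for $\theta'$ downwards. Fix $\epsilon>0$. Using Lemma~\ref{lem:mk}, $n$ can be taken arbitrarily large and divisible by a fixed integer so that there is an $(r,\theta',n)$-cover $Q'$ with $h(Q')\le h(r,\theta')+\epsilon$ and so that, with $x=(\eta-\theta')/n$ the spacing, $p:=(\theta'-\theta)/x$ is a positive integer. Put $N=n+p$; then $x=(\eta-\theta)/N$, so the vertex set $\{\theta+jx:p<j\le p+rn\}$ of $Q'$ sits inside the vertex set $\{\theta+jx:1\le j\le rN\}$ of an $(r,\theta,N)$-cover. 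The vertices left unused are the $p$ ``low'' ones $\theta+x,\dots,\theta+px=\theta'$ and the $(r-1)p$ ``high'' ones running up to $\theta+rNx=M(\theta)$. I would assemble these into $p$ new edges, each containing one low and $r-1$ high vertices, paired anti-monotonically: the edge receiving the $i$-th largest low vertex receives the $i$-th block of $r-1$ high vertices counted from the bottom. Together with $E(Q')$ this yields an $(r,\theta,N)$-cover $Q$.

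It then remains to bound $h(Q)=\max\{h(Q'),\max_i\pi_i\}$, where $\pi_i$ is the product over the $i$-th new edge. A short computation, using $\theta+(p+rn)x=M(\theta')$, shows that if this edge has low vertex $s$ then its largest high vertex equals $M(s)+(r-1)x$, so $\pi_i\le s\bigl(M(s)+(r-1)x\bigr)^{r-1}$; since $M(s)\ge M(\theta')>0$ on $[\theta,\theta']$ and $x\to0$ as $n\to\infty$, this gives $\pi_i\le(1+o(1))\,sM(s)^{r-1}=(1+o(1))\phi(s)\le(1+o(1))\max_{[\theta,\theta']}\phi$, uniformly in $i$. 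Hence $h(Q)\le\max\{h(r,\theta')+\epsilon,\ \max_{[\theta,\theta']}\phi+\epsilon\}$ for $n$ large, and as $h(r,\theta)\le h(r,\theta,N)\le h(Q)$ and $\epsilon$ is arbitrary, (\ref{eqn:vphi}) follows.

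To finish, I would note that $\phi(0)=0<(1/(r+1))^r\le h(r,0)$, and that $\phi-h(r,\cdot)$ is continuous (Lemma~\ref{lem:cont}) with no zero in $[0,\varphi_r)$ by minimality of $\varphi_r$, hence stays strictly negative there. So for every $s\in[\theta,\theta']\subseteq[0,\varphi_r)$ we get $\phi(s)<h(r,s)\le h(r,\theta)$ (the last step because $h(r,\cdot)$ is non-increasing and $s\ge\theta$), whence $\max_{[\theta,\theta']}\phi<h(r,\theta)$. Thus the maximum in (\ref{eqn:vphi}) cannot be its second term — that would force $h(r,\theta)<h(r,\theta)$ — so $h(r,\theta)\le h(r,\theta')$, and with monotonicity $h(r,\theta)=h(r,\theta')$, as required. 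I expect the genuinely delicate part to be the combinatorial bookkeeping of the extension step: the exact counts of low and high vertices, the validity of the anti-monotone pairing, the divisibility arrangements, and the uniform estimate $\pi_i\le(1+o(1))\phi(s)$; the rest is soft.
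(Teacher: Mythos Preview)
Your proposal is correct and follows essentially the same approach as the paper: both take a near-optimal $(r,\theta',n)$-cover with $\theta'$ close to (but below) $\varphi_r$ and extend it downward to an $(r,\theta,N)$-cover by adjoining new edges, each consisting of one new low vertex and $r-1$ new high vertices in the same anti-monotone pairing, then observe that each new edge product is (up to an $O(x)$ error) at most $\phi(s)=s(1-1/(r+1)-(r-1)s)^{r-1}<h(r,s)$ on $[0,\varphi_r)$, so the new edges cannot increase~$h$. The only differences are presentational: you add all $p$ edges at once and package the construction as the clean inequality~(\ref{eqn:vphi}), whereas the paper adds them one step at a time and handles the $O(x)$ error via an explicit $\delta$ chosen from compactness.
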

\begin{proof}
  In a nutshell, we take an $h(r,\varphi_r,n)$ cover $Q_0$ with $h(Q_0)\approx
  h(r,\varphi_r)$ and then, given $\theta<\varphi_r$, we increase the vertex set
  $V(Q_0)$ above and below to obtain an $(r,\theta,n+\ell)$-cover
  $V(Q_\ell)$ by adding edges containing the new vertices: the property of
  $\varphi_r$ means that these new edges don't affect $h(Q_\ell)$, so
  $h(Q_\ell)=h(Q_0)$ and hence $h(r,\theta)\le h(r,\varphi_r)$, which is what
  we are after. In practice the outline given needs to be perturbed a
  little, for technical reasons.

  By the continuity of $h(r,\theta)$ (Lemma~\ref{lem:cont}) and the
  definition of~$\varphi_r$, we know that
  $\theta(1-1/(r+1)-(r-1)\theta)^{r-1}<h(r,\theta)$ for $\theta<\varphi_r$.
  Let $\epsilon>0$. Since $h(r,\theta)$ is continuous we may choose
  $0<\theta'<\varphi_r$ with $h(r,\theta')<h(r,\varphi_r)+\epsilon$.  By
  properties of continuity there exists $\delta>0$ such that
  $\theta(1-1/(r+1)-(r-1)\theta)^{r-1}<h(r,\theta)-\delta$ for
  $\theta\in[0,\theta']$. Because $\theta'<1/(r+1)$ there is some
  $(r,\theta', n)$-cover $Q_0$ with $h(Q_0)<h(r,\theta')+\epsilon$ where,
  by Lemma~\ref{lem:mk}, $n$ can be as large as we please. Then
  $V(Q_0)=\{1/(r+1)+jx: j=-n+1,-n+2,\ldots,(r-1)n\}$ with
  $x=(1/(r+1)-\theta')/n$; we choose $n$ so that $x<\delta$.

  Let $\theta\in(0,\theta')$. Choose $\ell$ minimal so that
  $\theta'-\ell(1/(r+1)-\theta')/n\le \theta$, and for $k=0,1,\ldots,\ell$,
  define $\theta_k=\theta'-k(1/(r+1)-\theta')/n$. Thus $\theta_0=\theta'$
  and $\theta_\ell\le \theta$. (Moreover, by increasing $n$ again if
  necessary, we can guarantee that $\theta_\ell>0$.) Observe that
  $(1/(r+1)-\theta_k)/(n+k)=(1/(r+1)-\theta')/n=x$. Hence if $Q_k$ is an
  $r$-cover with $V(Q_k)=\{1/(r+1)+jx: j=-n-k+1,-n+2,\ldots,(r-1)(n+k)\}$,
  then $Q_k$ is an $(r,\theta_k,n+k)$-cover, and $V(Q_0)\subset
  V(Q_1)\subset\cdots\subset V(Q_\ell)$. We construct such covers by
  defining $E(Q_k)=E(Q_{k-1})\cup\{e_k\}$, $k=1,\ldots,\ell$, where
  $e_k=\{1/(r+1)+jx: j=-n-k+1,(r-1)(n+k-1)+1,\ldots,(r-1)(n+k)\}$.

  For each~$k\ge 1$, $\prod_{y\in
    e_k}y<(\theta_k+x)(1-1/r-(r-1)\theta_k)^{r-1} \le
  \theta_k(1-1/(r+1)-(r-1)\theta_k)^{r-1}+x<h(r,\theta_k)$ because
  $x<\delta$, and $h(r,\theta_k)\le h(Q_k)$, because
  $Q_k$ is an $(r,\theta_k,n+k)$-cover. Therefore $h(Q_k)=\max\{\prod_{y\in
    e}y:e\in E(Q_k),e\ne y\}=h(Q_{k-1})$.  Hence $h(r,\theta_\ell)\le
  h(Q_\ell)=h(Q_0)< h(r,\theta')+\epsilon<h(r,\varphi_r)+2\epsilon$. The outer
  inequality holds for all $\epsilon>0$ so $h(r,\theta_\ell)\le
  h(r,\varphi_r)$.  But we know (comment after Definition~\ref{defn:fg}) that
  $f(r,\theta)$ decreases with~$\theta$, meaning by Theorem~\ref{thm:h=f}
  that $h(r,\theta)$ decreases, and so $h(r,\theta_\ell)=
  h(r,\varphi_r)$. Since $\theta_\ell\le \theta<\varphi_r$ and $h$ is decreasing,
  we have $h(r,\theta)= h(r,\varphi_r)$.
\end{proof}

It is readily checked, say by taking logarithms and differentiating, that
the function $\theta(1-1/(r+1)-(r-1)\theta)^{r-1}$ increases for $\theta\le
1/(r^2-1)$ and decreases thereafter. For $r=1$ the function is always
increasing and because $h(1,\theta)$ is decreasing we have
$\varphi_1=1/(r+1)=1/2$. Likewise, for $r=2$, the function is increasing for
$\theta\in [0,1/3]=[0,1/(r+1)]$, and so $\varphi_2=1/(r+1)=1/3$.  Consequently
Theorem~\ref{thm:varphi} means both $h(1,\theta)$ and $h(2,\theta)$ are
constant throughout, as are therefore $f(2,\theta)$ and $f(3,\theta)$
(though we knew this already for other reasons). To get information for
other values of~$r$ we need a useful lower bound on $h(r,\theta)$, which is
what we do next.

\subsection{Lower bounds}

A simple averaging argument provides an initial, but
non-trivial, lower bound on $h(r,\theta)=f(r+1,\theta)$, in terms of the
following function.

\begin{defn}\label{defn:w}
For $r\ge1$ and $\theta\in [0,1/(r+1))$, define
$$
w(r,\theta)=e^{-r}\left(
u(1-1/(r+1)-(r-1)\theta)\over u(\theta)\right)^{1/(1/(r+1)-\theta)}
$$
where $u(y)=y^y$ and $u(0)=1$.
\end{defn}

\begin{lem}\label{lem:gmbound}
  Let $r\ge1$ and $\theta\in [0,1/(r+1))$. Then
  $
  f(r+1,\theta) = h(r,\theta)\ge w(r,\theta)
  $
  holds. In particular, $h(r,0)\ge (r/e(r+1))^r$.
\end{lem}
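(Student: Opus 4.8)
The plan is to apply Theorem~\ref{thm:h=f} to rewrite $f(r+1,\theta)=h(r,\theta)$, and then bound $h(r,\theta)$ below by a one-line averaging argument over the edges of a cover. So it suffices to prove $h(r,\theta,n)\ge w(r,\theta)$ for every $n\in\mathbb N$. Fix an $(r,\theta,n)$-cover~$Q$. Its edge set is a perfect matching on $V(Q)$, so every vertex lies in exactly one edge, and hence $\prod_{e\in E(Q)}\prod_{y\in e}y=\prod_{y\in V(Q)}y=:P_n$; note that $P_n$ depends only on $r,\theta,n$, not on the particular cover. Since there are exactly $n$ edges, the largest edge-product is at least the geometric mean of the $n$ edge-products, so $h(Q)=\max_{e\in E(Q)}\prod_{y\in e}y\ge P_n^{1/n}$, and therefore $h(r,\theta,n)\ge P_n^{1/n}$.

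Next I would estimate $P_n$ from below. Write $c=1/(r+1)$ and $g(t)=\log(\theta+(c-\theta)t)$, so that $\log P_n=\sum_{j=1}^{rn}g(j/n)$. Because $\theta<c$, the function $g$ is increasing on $[0,r]$; consequently $\frac1n g(j/n)\ge\int_{(j-1)/n}^{j/n}g(t)\,dt$ for each $j$, and summing over $j\in[rn]$ gives $\frac1n\log P_n\ge\int_0^r g(t)\,dt$. (This remains valid when $\theta=0$: then $\int_0^r\log t\,dt$ converges and the right-endpoint comparison is unaffected.)

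It remains to evaluate the integral and identify it with $\log w(r,\theta)$. Substituting $s=\theta+(c-\theta)t$ and using $rc=1-c$, the upper limit becomes $b:=\theta+(c-\theta)r=1-\frac1{r+1}-(r-1)\theta$, and moreover $b-\theta=r(c-\theta)$. Hence $\int_0^r g(t)\,dt=\frac1{c-\theta}[s\log s-s]_\theta^b=\frac{b\log b-\theta\log\theta}{c-\theta}-r=\frac{\log(u(b)/u(\theta))}{1/(r+1)-\theta}-r$, with $u(y)=y^y$ and $u(0)=1$ (since $\theta\log\theta\to0$). By Definition~\ref{defn:w} this equals $\log w(r,\theta)$, so $P_n^{1/n}\ge w(r,\theta)$ for all $n$; therefore $h(r,\theta)\ge w(r,\theta)$ and $f(r+1,\theta)=h(r,\theta)\ge w(r,\theta)$. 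Finally, taking $\theta=0$ gives $b=r/(r+1)$ and $u(\theta)=1$, so $w(r,0)=e^{-r}\bigl(u(r/(r+1))\bigr)^{r+1}=e^{-r}(r/(r+1))^{r}=(r/e(r+1))^{r}$, which is the last assertion.

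The argument is largely routine; the one point that genuinely needs attention is that $h(r,\theta)=\inf_n h(r,\theta,n)$, so it is not enough to know that $P_n^{1/n}\to w(r,\theta)$ --- we need $P_n^{1/n}\ge w(r,\theta)$ for every finite $n$. That is exactly why the monotonicity of $g$ is used to make the right-endpoint Riemann sum an \emph{overestimate} of the integral, a bound valid at each finite $n$, rather than merely passing to a limit. Beyond that, and the harmless logarithmic singularity at $\theta=0$, I do not expect any real obstacle.
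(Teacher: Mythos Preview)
Your argument is correct and is essentially the same as the paper's: both use the geometric-mean lower bound $h(Q)\ge(\prod_{v\in V(Q)}v)^{1/n}$ and then compare the resulting Riemann sum to the corresponding integral via monotonicity of $\log$, differing only in the parameterisation of that integral. Your explicit remark that the monotonicity yields a bound valid at every finite~$n$ (rather than merely in the limit) is a worthwhile clarification that the paper leaves implicit.
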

\begin{proof}
  Let $Q$ be an $(r,\theta,n)$-cover, with $V(Q)=\{\theta+jx:
  j\in[rn]\}$ and $x=(1/(r+1)-\theta)/n$. For $e\in E(Q)$ let
  $\pi(e)=\prod_{y\in e}y$. Then
  $$
  h(Q)=\max_{e\in E(Q)}\pi(e) 
  \ge\Bigl(\prod_{e\in E(Q)}\pi(e)\Bigr)^{1/n} =\Bigl(\prod_{v\in
    V(Q)}v\Bigr)^{1/n} = e^{-S}
  $$
  where $nS=\sum_{v\in V(Q)}\log (1/v)$. Now
  $xnS\le\int_\theta^{1-1/(r+1)-(r-1)\theta}\log(1/t)dt =
  -\log(u(1-1/(r+1)-(r-1)\theta)+ \log
  (u(\theta))+r(1/(r+1)-\theta)$. Hence $h(Q)\ge
  e^{-r}(u(1-1/(r+1)-(r-1)\theta)/u(\theta))^{1/(1(r+1)-\theta)}=w(r,\theta)$
  holds for every $(r,\theta,n)$-cover~$Q$, and, bearing in mind
  Theorem~\ref{thm:h=f} and the definition of $h(r,\theta)$, this proves
  the lemma.
\end{proof}

We explore the properties of $w(r,\theta)$ a little further. The next
definition is close to that of $\varphi_r$ in Definition~\ref{defn:varphi}.

\begin{defn}\label{defn:phi}
  For each $r\ge 1$, let $\phi_r$ be the smallest positive solution to the
  equation $\theta(1-1/(r+1)-(r-1)\theta)^{r-1}=w(r,\theta)$, where
  $w(r,\theta)$ is as in Definition~\ref{defn:w}.
\end{defn}

\begin{lem}\label{lem:w}
Let $r\ge1$. Then the function $w(r,\theta)$ is increasing for
$\theta\le\phi_r$ and decreasing for $\theta\ge\phi_r$.
\end{lem}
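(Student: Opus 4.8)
The plan is to take logarithms of $w(r,\theta)$, differentiate once to reduce the monotonicity question to the sign of an explicit elementary function, and then differentiate a second time to pin down where that sign changes.

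Throughout write $a=1/(r+1)$ and $c=c(\theta)=1-1/(r+1)-(r-1)\theta$, so that $c-\theta=r(a-\theta)$ and, crucially, $c-(r-1)(a-\theta)=a$. By Definition~\ref{defn:w}, $\log w(r,\theta)=-r+\bigl(c\log c-\theta\log\theta\bigr)/(a-\theta)$ on $[0,1/(r+1))$. Differentiating (recall $\tfrac{d}{d\theta}(c\log c)=-(r-1)(\log c+1)$ since $c'=-(r-1)$) and simplifying the numerator with the identity $c-(r-1)(a-\theta)=a$, one obtains
$$\frac{d}{d\theta}\log w(r,\theta)=\frac{M(\theta)}{(a-\theta)^2},\qquad M(\theta):=a\log\frac{c}{\theta}-(c-\theta)=a\log\frac{c}{\theta}-r(a-\theta).$$
Thus the monotonicity of $w(r,\cdot)$ is governed by the sign of $M$ on $(0,1/(r+1))$. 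Note $M(\theta)\to+\infty$ as $\theta\to0^+$, while $M(\theta)\to0$ as $\theta\to(1/(r+1))^-$, since then $c\to a$ and both terms of $M$ tend to $0$; correspondingly $w(r,\cdot)$ extends continuously to $\theta=1/(r+1)$ with value $(1/(r+1))^r=h(r,1/(r+1))$, and $\theta=1/(r+1)$ always satisfies the equation defining $\phi_r$.

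Next I analyse $M'$. A direct computation gives $M'(\theta)=r-a(r-1)/c-a/\theta$, and multiplying by $\theta c/a>0$ shows $M'(\theta)$ has the same sign as $-P(\theta)$, where $P(\theta)=(r-1)(r+1)^2\theta^2-r(r+1)\theta+1$; for $r\ge2$ the quadratic $P$ factors with roots $\tfrac1{r^2-1}$ and $\tfrac1{r+1}$ (and $P(\theta)=1-2\theta$ when $r=1$). If $r\le2$ then $P\ge0$ on $[0,1/(r+1)]$, so $M$ is non-increasing there; combined with $M(0^+)=+\infty$ and $M(\theta)\to0$ as $\theta\to(1/(r+1))^-$, this forces $M>0$ on $[0,1/(r+1))$, so $w(r,\cdot)$ is increasing on all of $[0,1/(r+1)]$. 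If $r\ge3$ then $P<0$ strictly on $\bigl(\tfrac1{r^2-1},\tfrac1{r+1}\bigr)$, so $M$ is strictly decreasing on $\bigl(0,\tfrac1{r^2-1}\bigr)$ and strictly increasing on $\bigl(\tfrac1{r^2-1},\tfrac1{r+1}\bigr)$; since it increases up to the limit value $0$, we get $M<0$ on $\bigl(\tfrac1{r^2-1},\tfrac1{r+1}\bigr)$, in particular $M\bigl(\tfrac1{r^2-1}\bigr)<0$. As $M$ decreases from $+\infty$ to this negative minimum, it has exactly one zero $\theta^\ast\in\bigl(0,\tfrac1{r^2-1}\bigr)$, with $M>0$ on $(0,\theta^\ast)$ and $M<0$ on $(\theta^\ast,1/(r+1))$; hence $w(r,\cdot)$ increases on $[0,\theta^\ast]$ and decreases on $[\theta^\ast,1/(r+1)]$.

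It remains to identify the sign-change point with $\phi_r$. I claim $M(\theta)=0$ if and only if $w(r,\theta)=\theta c^{\,r-1}=\theta\bigl(1-1/(r+1)-(r-1)\theta\bigr)^{r-1}$: substituting the closed form for $\log w(r,\theta)$, clearing the factor $a-\theta$, and once more invoking $c-(r-1)(a-\theta)=a$, one finds the two conditions equivalent, every step being reversible. Thus the solutions of the equation defining $\phi_r$ are exactly the zeros of $M$, namely $\{\theta^\ast,1/(r+1)\}$ for $r\ge3$ and $\{1/(r+1)\}$ for $r\le2$; in either case the smallest such solution is the point at which $M$ changes sign, so $\phi_r=\theta^\ast$ (resp.\ $\phi_r=1/(r+1)$), and the previous paragraph gives precisely the claim. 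The only genuine content of the proof is the two algebraic collapses — of $\tfrac{d}{d\theta}\log w$ to $M(\theta)=a\log(c/\theta)-r(a-\theta)$, and of $M'$ to a positive multiple of $-P(\theta)$ with its clean roots $\tfrac1{r^2-1}$ and $\tfrac1{r+1}$ — both of which rest on the identity $c-(r-1)(a-\theta)=a$; after that it is a routine sign chase, the only points needing care being the degenerate cases $r=1,2$, where the roots of $P$ merge or $P$ degenerates and $\phi_r$ lies at the endpoint $1/(r+1)$.
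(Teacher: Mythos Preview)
Your proof is correct and complete. It takes a genuinely different route from the paper's argument: the paper explicitly remarks that one could ``prove the lemma by just calculating from the definitions'' but opts instead for a combinatorial sketch that interprets $w(r,\theta)$ as (approximately) the $r$th power of the geometric mean of the vertex set of a particular $(r,\theta,n)$-cover, and then tracks how removing the ``extremal'' edge $e_k$ moves that mean up or down according to whether $p(\theta)=\theta(1-1/(r+1)-(r-1)\theta)^{r-1}$ is below or above $w(r,\theta)$. You do precisely the direct calculation the paper foregoes: reducing the sign of $\tfrac{d}{d\theta}\log w$ to that of $M(\theta)=a\log(c/\theta)-r(a-\theta)$ via the identity $c-(r-1)(a-\theta)=a$, and then showing $M'$ has the sign of $-P(\theta)$ with the clean factorisation $P(\theta)=(r-1)(r+1)^2(\theta-\tfrac{1}{r^2-1})(\theta-\tfrac{1}{r+1})$. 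Your approach is more self-contained and rigorous (the paper's is avowedly a sketch), and your observation that $M(\theta)=0$ is \emph{equivalent} to $w(r,\theta)=\theta c^{r-1}$ neatly identifies the turning point with $\phi_r$. The paper's approach, on the other hand, makes transparent \emph{why} the defining equation for $\phi_r$ is the natural threshold, and also explains, via the known unimodality of $p(\theta)$, why $\phi_r<1/(r^2-1)$ without factoring a quadratic. Your careful handling of the degenerate cases $r=1,2$, where the roots of $P$ coalesce or $P$ becomes linear and $\phi_r$ is forced to the endpoint $1/(r+1)$, is a point the paper leaves implicit.
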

\begin{proof}
  It is possible to prove the lemma by just calculating from the
  definitions, but it is more illuminating to interpret the result in terms
  of covers. We argue in a way parallel to the proof of
  Theorem~\ref{thm:varphi}; this time, to avoid excessive technicalities,
  we content ourselves with a detailed sketch.

  Let $n$ be very large and let $Q_0$ be the $(r,0,n)$-cover with
  $V(Q_0)=\{j/(r+1)n: j\in[rn]\}$ and edge set $\{e_k:0\le k< n\}$ where
  $e_0$ comprises the least vertex and $(r-1)$ largest vertices, $e_1$ the
  second least and $(r-1)$ largest remaining vertices, and so on: that is,
  $e_k=\{(k+1)/(r+1)n, (rn-(r-1)k-r+2)/(r+1)n,\ldots,
  (rn-(r-1)k)/(r+1)n\}$. Then $Q_k$, which is $Q_0$ with the edges and
  vertices of $e_0,\ldots,e_{k-1}$ removed, is an $(r, k/(r+1)n,
  n-k)$-cover.

  The product $\prod_{y\in e_k}y$ is very close to
  $p(\theta)=\theta(1-1/(r+1)-(r-1)\theta)^{r-1}$ (the more so as $n$
  grows). Recall from the comment at the end of~\S\ref{subsec:further} that
  $p(\theta)$ increases for $\theta\le 1/(r^2-1)$ and decreases
  thereafter. It must therefore be that $\phi_r< 1/(r^2-1)$ in order for
  Definition~\ref{defn:phi} to be satisfied. In the proof of
  Lemma~\ref{lem:gmbound} we saw that $w(r,\theta)$ was very close to the
  $r$th power of the geometric mean of the vertices of the
  $(r,\theta,n)$-cover~$Q$. Hence if $\theta=k/(r+1)n$ for some $k$ then
  $w(r,\theta)$ is very nearly the $r$th power of the geometric mean of
  $V(Q_k)$. If $\theta<\phi_r$ this quantity is greater than
  $p(\theta)\approx\prod_{y\in e_k}y$ so the mean of $V(Q_{k+1})$ is
  greater than that of $V(Q_k)$; thus $w(r,\theta)$ is increasing at this
  point.

  On the other hand, when $p(\theta)>w(r,\theta)$ then $\prod_{y\in e_k}y$
  exceeds the $r$th power of the geometric mean of $V(Q_k)$, so the mean of
  $V(Q_{k+1})$ will be less than that of $V(Q_k)$ and $w(r,\theta)$ will be
  decreasing. Certainly $p(\theta)>w(r,\theta)$ while
  $\phi_r<\theta<1/(r^2-1)$ since $p(\theta)$ is increasing in this range
  and so $w(r,\theta)$ is perforce decreasing. But now, the fact that
  $p(\theta)$ decreases for $\theta\ge 1/(r^2-1)$ means that $\prod_{y\in
    e_k}y$ is a decreasing function of $k$ in the remaining range; that
  is, when moving from $Q_k$ to $Q_{k+1}$ we are always removing the edge
  with the largest product, so the mean of $V(Q_k)$ continues to decrease,
  and thus so does $w(r,\theta)$.
\end{proof}

In the proof of his lemma it was seen that $p(\theta)$ increases for
$\theta\le\phi_r$. Comparing Definitions~\ref{defn:varphi}
and~\ref{defn:phi}, and noting $h(r,\theta)\ge w(r,\theta)$ as stated in
Lemma~\ref{lem:gmbound}, we then observe that $\phi_r\le\varphi_r$.

\begin{defn}\label{defn:H}
  For $r\ge 1$ and $0\le\theta\le 1/(r+1)$, let
  $$
  H(r,\theta)=
  \begin{cases}
    w(r,\phi_r) & \text{for } \theta\le\phi_r\\
    w(r,\theta) & \text{for } \theta\ge\phi_r
  \end{cases}
  $$
  where $\phi_r$ is as in Definition~\ref{defn:phi}.
\end{defn}

Lemma~\ref{lem:w} means that $H(r,\theta)$ is a decreasing function
of~$\theta$. The importance of $H(r,\theta)$ lies in the next result.

\begin{thm}\label{thm:fge}
Let $r\ge1$ and $0\le\theta\le 1/(r+1)$. Then
$$
f(r+1,\theta)\,=\,h(r,\theta)\,\ge\, H(r,\theta).
$$
\end{thm}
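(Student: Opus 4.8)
The plan is to assemble the statement entirely from results already in hand. The identity $f(r+1,\theta)=h(r,\theta)$ is precisely Theorem~\ref{thm:h=f}, so the only thing to prove is the inequality $h(r,\theta)\ge H(r,\theta)$, and I would split the argument according to the two cases in Definition~\ref{defn:H}.

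For $\theta\ge\phi_r$ the definition gives $H(r,\theta)=w(r,\theta)$, and Lemma~\ref{lem:gmbound} already asserts $h(r,\theta)\ge w(r,\theta)$; so in this range the inequality is immediate and nothing further is needed.

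For $\theta\le\phi_r$ we have $H(r,\theta)=w(r,\phi_r)$, and the key ingredient is the observation recorded just after the proof of Lemma~\ref{lem:w}, namely that $\phi_r\le\varphi_r$. Hence $[0,\phi_r]\subseteq[0,\varphi_r]$, and Theorem~\ref{thm:varphi} tells us that $h(r,\theta)$ is constant on $[0,\varphi_r]$; in particular $h(r,\theta)=h(r,\phi_r)$ for every $\theta\in[0,\phi_r]$. Now applying Lemma~\ref{lem:gmbound} at the single point $\phi_r$ yields $h(r,\phi_r)\ge w(r,\phi_r)$, and combining the last two facts gives $h(r,\theta)=h(r,\phi_r)\ge w(r,\phi_r)=H(r,\theta)$, as required.

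Since all of the work has in effect been done already in Theorems~\ref{thm:h=f} and~\ref{thm:varphi} and in Lemmas~\ref{lem:gmbound} and~\ref{lem:w}, I do not expect any genuine obstacle; the one point to be careful about is to make explicit use of $\phi_r\le\varphi_r$, because it is exactly this inequality that allows Theorem~\ref{thm:varphi} to be invoked over the whole interval $[0,\phi_r]$ and thus to transfer the lower bound $w(r,\phi_r)$ down to every smaller~$\theta$.
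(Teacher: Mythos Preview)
Your argument is correct and follows the same overall structure as the paper's proof: invoke Theorem~\ref{thm:h=f} for the equality, use Lemma~\ref{lem:gmbound} directly when $\theta\ge\phi_r$, and handle $\theta\le\phi_r$ separately. The one difference is in that last case. You appeal to the inequality $\phi_r\le\varphi_r$ together with Theorem~\ref{thm:varphi} to conclude that $h(r,\theta)=h(r,\phi_r)$ on $[0,\phi_r]$, and then apply Lemma~\ref{lem:gmbound} at~$\phi_r$. The paper instead uses only the fact (noted after Definition~\ref{defn:fg}) that $f(r+1,\theta)=h(r,\theta)$ is decreasing in~$\theta$: for $\theta\le\phi_r$ this gives $h(r,\theta)\ge h(r,\phi_r)\ge w(r,\phi_r)=H(r,\theta)$ immediately. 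So the paper's route is more economical, avoiding both Theorem~\ref{thm:varphi} and the observation $\phi_r\le\varphi_r$; your detour through constancy is valid but unnecessary, since the weaker monotonicity already suffices.
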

\begin{proof} Theorem~\ref{thm:h=f} shows
  $f(r+1,\theta)=h(r,\theta)$. Lemma~\ref{lem:gmbound} shows
  $f(r+1,\theta)\ge w(r,\theta)$ for all~$\theta$, and it was noted after
  Definition~\ref{defn:fg} that $f(r+1,\theta)$ is decreasing. Thus, for
  $\theta\le\phi_r$, $f(r+1,\theta)\ge f(r+1,\phi_r)\ge w(r,\phi_r) =
  H(r,\theta)$, and for $\theta\ge\phi_r$, $f(r+1,\theta)\ge w(r,\theta) =
  H(r,\theta)$.
\end{proof}

As can be seen from the proofs of Theorem~\ref{thm:varphi} and
Lemma~\ref{lem:gmbound}, what lies behind the bound in the theorem is
this. If $Q$ is an $(r,\theta,n)$-cover, where $n$ is large, and
$\theta<\varphi_r$, then the edge product $\prod_{y\in e}y$ has no effect on
$h(Q)$ if $e$ contains an element less than $\varphi_r$. On the other hand,
if $\theta>\varphi_r$, then $h(Q)$ is near to the lower bound $w(r,\theta)$
only if all edge products are more or less equal.

Surprisingly, it seems that such covers, where all edge products are
roughly equal, might exist. The case of most immediate interest is
$r=3$. In this case, $\phi_3=0.070906\ldots$ and
$w(3,\phi_3)=0.026227\ldots$. Using a computer program to generate
$(3,\phi_3, n)$-covers, which aims to minimise the sum of edge products by
switching pairs of edges in the manner of the proof of
Lemma~\ref{lem:hltf}, we have examples of $(3,\phi_3, 10000)$-covers $Q$
with $h(Q)\le 0.026232\ldots$, meaning $h(3,\phi_3)\le h(3,\phi_3,
10000)\le0.026232\ldots$. Given that $\phi_3\le \varphi_3$ and that
$h(3,\theta)$ is decreasing, this shows $h(3,\varphi_3)\le0.026232\ldots$
and so Theorem~\ref{thm:varphi} implies $h(3,0)\le0.026232\ldots$. But by
Theorem~\ref{thm:fge} we have $h(3,0)\ge
H(3,\phi_3)=w(3,\phi_3)=0.026227\ldots$. In summary, $0.026227\ldots\le
h(3,0)=f(4,0)\le0.026232\ldots$.

Having tried the computer program on a few other pairs $(r,\theta)$,
we are led to make the following conjecture.

\begin{con}\label{con:f=}
  Equality holds in Theorem~\ref{thm:fge} for all $r$ and
  $\theta$.
\end{con}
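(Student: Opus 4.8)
The plan is to deduce the conjecture from the machinery already in place by reducing it to one existence statement about ``balanced'' covers, and then to attack that statement with the local-switching idea of Lemma~\ref{lem:hltf}.

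\emph{The reduction.} By Theorem~\ref{thm:h=f} we have $f(r+1,\theta)=h(r,\theta)$, and Theorem~\ref{thm:fge} already gives $h(r,\theta)\ge H(r,\theta)$, so only the reverse inequality $h(r,\theta)\le H(r,\theta)$ remains. Suppose it has been proved for all $\theta\in[\phi_r,1/(r+1)]$, where by Definition~\ref{defn:H} it reads $h(r,\theta)\le w(r,\theta)$. Then for $\theta\in[0,\phi_r)$ one argues: $\phi_r\le\varphi_r$ (noted after Lemma~\ref{lem:w}), so Theorem~\ref{thm:varphi} gives $h(r,\theta)=h(r,\varphi_r)$; since $\varphi_r\in[\phi_r,1/(r+1)]$ the treated case gives $h(r,\varphi_r)\le w(r,\varphi_r)$; and since $w(r,\cdot)$ decreases on $[\phi_r,1/(r+1)]$ by Lemma~\ref{lem:w}, $w(r,\varphi_r)\le w(r,\phi_r)=H(r,\theta)$. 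The endpoint $\theta=1/(r+1)$ is immediate, since a short L'H\^opital computation shows $w(r,1/(r+1))=1/(r+1)^r$, which is exactly the value of $h(r,1/(r+1))$ fixed by Definition~\ref{defn:cover}. So everything reduces to: \emph{for each $r\ge1$, each $\theta\in[\phi_r,1/(r+1))$ and each $\epsilon>0$, there is an $(r,\theta,n)$-cover $Q$ with $h(Q)<w(r,\theta)+\epsilon$.} (For $r\le2$ the function $\theta(1-1/(r+1)-(r-1)\theta)^{r-1}$ is increasing on all of $[0,1/(r+1)]$, which forces $\phi_r=1/(r+1)$ and makes this range empty, so the cases $r\le2$ follow from the reduction alone.)

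\emph{Reformulation.} The proof of Lemma~\ref{lem:gmbound} shows that every $(r,\theta,n)$-cover $Q$ satisfies $h(Q)\ge\bigl(\prod_{v\in V(Q)}v\bigr)^{1/n}$ and that this lower bound tends to $w(r,\theta)$ as $n\to\infty$. Hence what must be constructed, for $n$ large, is a perfect matching of the vertex set $\{\theta+jx:j\in[rn]\}$, with $x=(1/(r+1)-\theta)/n$, into $r$-element edges $e_1,\dots,e_n$ whose logarithmic weights $\sum_{y\in e_k}\log y$ all lie within $\epsilon'$ of their common average $\frac{1}{n}\sum_{v\in V(Q)}\log v$ --- equivalently, a cover in which the largest edge-product exceeds the $r$-th power of the geometric mean of its vertices by a factor only $1+o(1)$. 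After a harmless perturbation of $\theta$, legitimate by continuity of $h(r,\cdot)$ (Lemma~\ref{lem:cont}) and of $w(r,\cdot)$, one may even take $\theta$ to be an integer multiple of $x$, so that $V(Q)$ is a scalar multiple of a block of $rn$ consecutive integers and the problem becomes the purely combinatorial one of partitioning such a block into $n$ $r$-subsets with near-equal products.

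\emph{Construction and the main obstacle.} To produce a near-balanced cover I would start from an arbitrary $(r,\theta,n)$-cover and repeatedly apply local switches of the type used in Lemma~\ref{lem:hltf}: replace two edges $e,f$ by $(e\setminus\{u\})\cup\{v\}$ and $(f\setminus\{v\})\cup\{u\}$ whenever this decreases $\sum_e\prod_{y\in e}y$, and hence cannot push the largest product above the geometric-mean target. The statement to be proved is that, when $\theta\ge\phi_r$, the configurations admitting no further improving switch are precisely those whose edge-products are all equal up to $o(1)$; the anticipated limiting object is, at least for a stratified construction, an $r$-tuple of measure-preserving maps $\gamma_1=\mathrm{id},\gamma_2,\dots,\gamma_r\colon[0,1]\to[0,\ell]$, $\ell=1/(r+1)-\theta$, for which $\sum_{t=1}^{r}\log\!\bigl(\theta+(t-1)\ell+\gamma_t(s)\bigr)$ is independent of $s$. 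The hard part is exactly this last assertion --- ruling out that the switching process stalls at a locally optimal but unbalanced cover. The hypothesis $\theta\ge\phi_r$ is what removes the first-order obstruction: in a balanced cover the ``extreme'' edge made of the smallest vertex and the $r-1$ largest has product $p(\theta)=\theta(1-1/(r+1)-(r-1)\theta)^{r-1}$ equal to $w(r,\theta)$, so $p(\theta)\ge w(r,\theta)$ is forced, and this fails for $\theta<\phi_r$ --- which is both why no balancing is possible there and why $r=2$, with $\phi_2=1/(r+1)$, is the most degenerate case. Whether any higher-order obstruction survives for $\theta\ge\phi_r$ is precisely what we have not been able to decide; the numerical evidence quoted above for $h(3,0)$ is the main reason we believe none does.
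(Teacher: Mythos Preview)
The statement you are attempting to prove is labelled a \emph{Conjecture} in the paper, not a theorem: the paper offers no proof, only numerical evidence from a switching program on $(3,\phi_3,10000)$-covers and the remark that the conjecture, if true, would force $\varphi_r=\phi_r$. So there is no ``paper's own proof'' to compare against.

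Your proposal is not a proof either, and you say so yourself. The reduction to constructing near-balanced $(r,\theta,n)$-covers for $\theta\ge\phi_r$ is correct and clean (indeed, once you have $h(r,\theta)=w(r,\theta)$ on $[\phi_r,1/(r+1)]$, the constancy of $h$ on $[0,\varphi_r]$ combined with the strict decrease of $w$ on $[\phi_r,1/(r+1))$ forces $\varphi_r=\phi_r$, and the small-$\theta$ case follows at once). But the core step --- showing that the local switching process cannot stall at an unbalanced cover when $\theta\ge\phi_r$ --- is exactly the content of the conjecture, and you explicitly leave it open in your final paragraph. The observation that $\theta\ge\phi_r$ removes the \emph{first-order} obstruction (the extreme edge has the right product) is a nice reformulation, but it does not by itself rule out higher-order obstructions, and you have not supplied an argument that it does. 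What you have written is a well-organised heuristic in the same spirit as the paper's own discussion, not a proof.
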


For what it's worth, we remark that, if true, this conjecture would imply
$\varphi_r=\phi_r$.

\subsection{Proofs of Theorems~\ref{thm:elemg} and~\ref{thm:elem}}

We have already proved most of the properties of $f(r,\theta)$ stated in
Theorem~\ref{thm:elem}; to finish the proof, and to derive
Theorem~\ref{thm:elemg} about $g(r,\alpha)$, we need only add a few more
observations. 

\begin{proof}[Proof of Theorem~\ref{thm:elem}]
  We noted after Definition~\ref{defn:fg} that $f(r,\theta)$ is decreasing,
  and Lemma~\ref{lem:cont} (together with Theorem~\ref{thm:h=f}) shows
  $f(r,\theta)$ is continuous, giving assertion~(a) of the
  theorem. Assertion~(b) was established as part of the proof of
  Theorem~\ref{thm:h=f}. As for~(c), let
  $P=(<_1,\ldots,<_r)$ be an $(r,m)$-preference order and let $P'$ be the
  $(r-1,m)$-preference order $(<_1,\ldots,<_{r-1})$. If
  $x=(x_1,\ldots,x_r)\in P$ and $\min x_i\ge \theta$ then
  $x'=(x_1,\ldots,x_{r-1})\in P'$, and $\prod_{i\ne i_x}x_i\le \prod_{i\ne
    i_{x'}, i\ne r} x_i$, so $f_P(\theta)\le f_{P'}(\theta)$, which
  implies~(c). Assertion~(d) was noted already after
  Definition~\ref{defn:fg}, and again after Theorem~\ref{thm:varphi}.
  
  By Definition~\ref{defn:varphi}, Theorem~\ref{thm:varphi} and
  Lemma~\ref{lem:gmbound}, we have $\varphi_{r-1}(1-1/r)^{r-2}\ge
  \varphi_{r-1}(1-1/r-(r-2)\varphi_{r-1})^{r-2}=h(r-1,\varphi_{r-1})=h(r-1,0)\ge
  ((r-1)/er)^{r-1}$, so $\varphi_{r-1}\ge (1-1/r)e^{-r+1}$. In the light of
  Theorem~\ref{thm:varphi}, assertion~(e) follows. Assertion~(f) was
  explained immediately before Conjecture~\ref{con:f=}. The first
  inequality of assertion~(g) is part of Lemma~\ref{lem:gmbound}, given
  that $f(r,0)=h(r-1,0)$. For the second inequality, consider the
  $(r-1,0,n)$-cover $Q$ with $V(Q)=\{j/rn: j\in[(r-1)n]\}$ and edge set
  $E(Q)=\{e_k:k\in[n]\}$ where $e_k=\{k/rn, (k+n)/rn,
  (k+(r-2)n)/rn\}$. Then $h(Q)=\prod_{y\in e_n}y=(r-1)!/r^{r-1}$, and
  $f(r,0)=h(r-1,0)\le h(Q)$. This completes the proof.
\end{proof}

\begin{proof}[Proof of Theorem~\ref{thm:elemg}]
  We appeal throughout to the properties of $f(r,\theta)$ given in
  Theorem~\ref{thm:elem} and to the fact that
  $g(r,\alpha)=-1/\log_rf(r,\beta(\alpha))$.

  By Definition~\ref{defn:g}, $\beta(\alpha)^\alpha=f(r,\beta(\alpha))$ so,
  by Theorem~\ref{thm:elem}~(a), $\beta(\alpha)$ is continuous in $\alpha$
  and, by the remark following the definition, strictly increasing.  Again
  appealing to Theorem~\ref{thm:elem}~(a) we see that $g(r,\alpha)$ is
  continuous and decreasing, which is assertion~(a). Assertion~(b) is a
  consequence of Theorem~\ref{thm:elem}~(b) and assertion~(c) follows from
  Theorem~\ref{thm:elem}~(d).

  Let $r\ge4$. Define $\delta$ by
  $\varphi_{r-1}^{1+\delta}=f(r,\varphi_{r-1})$. By
  Definition~\ref{defn:g}, $\beta(1+\delta)=\varphi_{r-1}$; by
  Theorem~\ref{thm:varphi} and the fact that $\beta(\alpha)$ is increasing,
  $g(r,\alpha)$ is constant for $\alpha\le 1+\delta$, so to prove
  assertion~(d) it is enough to show that $\delta>1/(r+3)$. In the previous
  proof we showed that $\varphi_{r-1}(1-1/r)^{r-2}\ge
  h(r-1,\varphi_{r-1})=f(r,\varphi_{r-1})$, so $\varphi_{r-1}^\delta\le
  (1-1/r)^{r-2}$. We also showed $\varphi_{r-1}\ge (1-1/r)e^{-r+1}$. Hence
  $[(1-1/r)e^{-r+1}]^\delta\le (1-1/r)^{r-2}$, and so
  $e^{-\delta(r-1)}\le(1-1/r)^{r-2-\delta}< e^{(r-2-\delta)/r}$, or
  $-\delta(r-1)< (r-2-\delta)/r$. Thus $\delta> (r-2)/(r^2-r+1)$, and,
  since $r\ge4$, this implies $\delta>1/(r+3)$ as desired.

  Assertion~(e) follows from the bounds $0.026227\le f(4,0)\le0.026233$
  mentioned before Conjecture~\ref{con:f=}, and~(f) follows straightaway from
  Theorem~\ref{thm:elem}~(g), which completes the proof.
\end{proof}

\section{Property B}\label{sec:B}

An $\ell$-uniform hypergraph $H$ is $k$-colourable if its vertices can be
coloured with $k$ colours so that no edge is monochromatic, and $\chi(H)$
is the smallest $k$ for which $H$ is $k$-colourable. Erd\H{o}s~\cite{E1,E2}
studied the minimum number of edges in a bipartite hypergraph~$H$ --- that
is, $\chi(H)=2$: such hypergraphs are said to have ``Property~B''.

Let $m(\ell,r)$ be the minimum number of edges in an $\ell$-graph $H$ with
$\chi(H)>r$. Let $Q(r,\ell)$ be the minimum number of vertices in an
$r$-partite $r$-graph $G$ with list chromatic number $\chi_l(G)\ge\ell$.
Extending the result of Erd\H{o}s, Rubin and Taylor~\cite{ERT}, who proved
the case $r=2$, Kostochka~\cite{K} proved that $m(\ell,r)$ and $Q(r,\ell)$
are closely tied: indeed $m(\ell,r)\le Q(r,\ell)\le rm(\ell,r)$.

There has been no significant improvement on the upper bound for
$m(\ell,2)$ since Erd\H{o}s~\cite{E2} proved $m(\ell,2)\le
\ell^22^\ell$. The lower bound has been improved a few times, the best to
date being $m(\ell,2)=\Omega((\ell/\log \ell)^{1/2} 2^\ell)$ by Radhakrishnan and
Srinivasan~\cite{RS}. A simple proof of this bound, and of the generalisation
$m(\ell,r)=\Omega((\ell/\log \ell)^{1-1/r} r^\ell)$, was given by
Cherkashin and Kozik~\cite{CK}.

The method of~\cite{CK} is close to that of Pluh\'ar~\cite{P}. If an
$\ell$-graph $H$ has fewer edges than is stated in the bound, then a random
argument shows there is some ordering of the vertices without any chain of
edges $e_1,e_2,\ldots,e_r$, such that the last vertex of $e_i$ is the first
of $e_{i+1}$, $1\le i\le r-1$. A simple greedy colouring algorithm then colours
$H$ with $r$ colours.

The relevant part of the proof in~\cite{ERT} and~\cite{K} that relates
$m(\ell,r)$ to $Q(r,\ell)$ is as follows: let $G$ be a complete $r$-partite
$r$-graph with $|E(H)|$ vertices in each class. Consider $V(H)$ to be a
palette and let $E(H)$ be assigned as lists to each vertex in~$V_i$,
$1\le i\le r$. If $G$ can be coloured from these lists then $\chi(H)\le
r$. Any list colouring algorithm can thus be translated to give some lower
bound on $m(\ell,r)$.

Our colouring algorithm for complete $r$-partite $r$-graphs selects some
preference order $P$, after which each vertex $v\in V_i$ chooses the colour
in $L(v)$ most preferred by~$<_i$. In the case $r=2$, where we choose $<_1$
to be the identity and $<_2$ to be its reverse, the translation is to find
an ordering of the vertices of $H$ without a chain $e_1,e_2$ and then to
colour the first vertex of each edge red and the last blue. This is not
quite the same as the method of~\cite{CK} but is effectively equivalent, and
the bound obtained on $m(\ell,2)$ is the same.

However our method makes no use of the fact that the lists in each $V_i$
are the same, and for $r>2$ the translated method is less effective than
the method in~\cite{CK}, though it does show $m(\ell,r)=\Omega((\ell/\log
\ell)^{1/2} r^\ell)$.

\section{Acknowledgement}

The authors are greatly indebted to the referee for a very careful reading
of such technical material.

\end{document}